\newtheorem{theorem}{Theorem}[section]
\newtheorem{lemma}[theorem]{Lemma}
\newtheorem{proposition}[theorem]{Proposition}
\newtheorem{corollary}[theorem]{Corollary}
\newtheorem{example}[theorem]{Example}
\newtheorem{question}[theorem]{Question}
\theoremstyle{remark}
\newtheorem{remark}[theorem]{Remark}
\numberwithin{equation}{section}
\newcommand \id{\mathds 1}
\newcommand {\R} {\mathbb{R}}
\newcommand {\E} {\mathbb{E}}
\newcommand {\N} {\mathbb{N}}
\newcommand {\Z} {\mathbb{Z}}
\renewcommand{\P} {\mathbb{P}}
\newcommand {\Var} {\mathrm{Var}}
\newcommand {\Cov} {\mathrm{Cov}}
\newcommand {\Capa} {\mathrm{Cap}}
\newcommand {\eps}{\varepsilon}
\begin{document}
\title[A Gaussian sprinkled decoupling inequality and applications]{A sprinkled decoupling inequality for \\ Gaussian processes and applications} 
\author{Stephen Muirhead}
\address{School of Mathematics and Statistics, University of Melbourne}
\email{smui@unimelb.edu.au}
\subjclass[2010]{60G15, 60G60, 60K35}
\keywords{Gaussian vectors, Gaussian fields, decoupling inequalities, percolation} 
\begin{abstract}
We establish the sprinkled decoupling inequality
\[  \P[X \in A_1 \cap A_2] - \P[X + \eps  \in A_1]\P[X + \eps  \in A_2] \le \frac{c \|K_{I_1,I_2}\|_\infty}{ \eps^2} , \]
where $X$ is an arbitrary Gaussian vector, $A_1$ and $A_2$ are increasing events that depend on coordinates $I_1$ and $I_2$ respectively, $\eps > 0$ is a sprinkling parameter, $\|K_{I_1,I_2}\|_\infty$ is the maximum absolute covariance between coordinates of $X$ in $I_1$ and $I_2$, and $c > 0$ is a universal constant. As an application we prove the non-triviality of the percolation phase transition for Gaussian fields on $\Z^d$ or $\R^d$ with (i) uniformly bounded local suprema, and (ii) correlations which decay at least polylogarithmically in the distance with exponent $\gamma > 3$; this expands the scope of existing results on non-triviality of the phase transition, covering new examples such as non-stationary fields and monochromatic random waves.
\end{abstract}
\date{\today}
\thanks{The author is supported by the Australian Research Council (ARC) Discovery Early Career Researcher Award DE200101467, and acknowledges the hospitality of the Statistical Laboratory, University of Cambridge, where part of this work was carried out. We also thank Michael McAuley, Alejandro Rivera, Pierre-Fran\c{c}ois Rodriguez, and Hugo Vanneuville for interesting discussions on this topic, and an anonymous referee for helpful suggestions.}

\maketitle

\section{Sprinkled decoupling inequalities}

In this paper we study decoupling inequalities of the form
\begin{equation}
\label{e:sdi}
  \P[X \in A_1 \cap A_2] - \P[X + \eps \in A_1]\P[X + \eps \in A_2]  \le \text{`small error',}
\end{equation}
where $X$ is a random vector or process, $A_1$ and $A_2$ are increasing events, and $\eps$ is a small `sprinkling' parameter; such `sprinkled decoupling inequalities' play a key role in the percolation theory of strongly-correlated systems (e.g.\ strongly-correlated Gaussian models \cite{rs13, pr15, cn21, ms22}, Poissonian models such as random interlacements \cite{sni12, pt15, dpr18}, random walk loop soups \cite{as19} and the cylinder model \cite{tw12,at21}, and gradient Gibbs measures \cite{rod16}). To explain the terminology of `sprinkling', consider the case that $X$ is an i.i.d.\ Gaussian vector and $A_1,A_2$ are events that depend only on the excursion set $\{X \ge u\}$ for some $u \in \R$. Since $\id_{\{X_i \ge u\}}$ has the law of a Bernoulli process~$\xi$, the addition of a small $\eps > 0$ in the second term in \eqref{e:sdi} is equivalent to superimposing (i.e.\ `sprinkling') $\xi$ with an independent Bernoulli process of small parameter. 

\smallskip
Note that the presence of `sprinkling' weakens the inequality compared to a non-sprinkled decoupling inequality of the form
\begin{equation}
\label{e:nsdi}
  \big| \P[X \in A_1 \cap A_2] - \P[X  \in A_1]\P[X  \in A_2]\big|  \le \text{`small error'}.
\end{equation}
Nevertheless, when working in off-critical regimes, one can usually tolerate the presence of sprinkling if it is arbitrarily small, and in multi-scale arguments, if it is summable over the scales (see Section~\ref{s:per} for an example).

\smallskip
In this paper we establish a general sprinkled decoupling inequality for Gaussian processes, discrete or continuous. In Section \ref{s:per} we present an application in Gaussian percolation theory, and in Section \ref{s:nsd} we discuss consequences for non-sprinkled decoupling.

\subsection{A sprinkled decoupling inequality for Gaussian processes}
Let $X = (X_i)_{1 \le i \le n}$ be a Gaussian vector with covariance kernel $K(i,j) = \textrm{Cov}[X_i, X_j]$. For $\eps > 0$, we write $X + \eps$ to denote $X + \eps \id$, where $\id$ is the vector of ones. For $I,J \subseteq \{1, \ldots,n \}$, let $K_{I,J} =  ( K(i,j) )_{i \in I, j \in J}$. 

\smallskip
An event $A$ is \textit{increasing} if $\{X \in A\} \subseteq \{X + v \in A\}$ for every $v  \in \R^n$ such that $v \ge 0$, and is \textit{supported on $I \subseteq \{1, \ldots, n\}$}, denoted $A \in \sigma(I)$, if $\{X \in A\} = \{X + v \in A\}$ for every $v \in \R^n$ such that $v|_I = 0$.

\smallskip
Our main result is the following:

\begin{theorem}
\label{t:sdi}
There exists a universal constant $c > 0$ such that, for all $I_1,I_2 \subseteq \{1,\ldots,n\}$, increasing events $A_1 \in \sigma(I_1)$ and $A_2 \in \sigma(I_2)$, and $\eps > 0$,
\begin{equation}
\label{e:tsdi1}
 \P[X \in A_1 \cap A_2] - \P[X + \eps \in A_1]\P[X + \eps \in A_2] \le \frac{c \|K_{I_1,I_2}\|_\infty}{ \eps^2},
 \end{equation}
 and
 \begin{equation}
\label{e:tsdi2}
  \P[X - \eps \in A_1]\P[X - \eps \in A_2]  - \P[X \in A_1 \cap A_2]  \le \frac{c \|K_{I_1,I_2}\|_\infty  }{\eps^2}.
 \end{equation}
If $K_{I_1,I_2} \ge 0$ then \eqref{e:tsdi1} holds with $c=1$ and \eqref{e:tsdi2} holds with $c = 0$.
\end{theorem}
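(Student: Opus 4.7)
The plan is to combine a sprinkling-induced averaging, which promotes the indicators $\ind_{A_k}$ to functions with a useful directional Lipschitz bound, with a Gaussian interpolation between $X$ and an auxiliary Gaussian vector whose $(I_1,I_2)$-cross-covariance has been set to zero.

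\textbf{Sprinkled smoothing.} For $k \in \{1,2\}$ set
\[ \bar f_k(x) \;=\; \frac{1}{\eps}\int_0^\eps \ind_{A_k}(x + s\id)\, ds. \]
Since $A_k$ is increasing and supported on $I_k$, the function $\bar f_k$ depends only on $x|_{I_k}$, is coordinate-wise non-decreasing, and is sandwiched as $\ind_{A_k}(x) \le \bar f_k(x) \le \ind_{A_k}(x+\eps\id)$, yielding
\[ \P[X \in A_1 \cap A_2] \le \E[\bar f_1(X)\bar f_2(X)] \qquad\text{and}\qquad \E[\bar f_k(X)] \le \P[X+\eps \in A_k]. \]
The critical feature is that $s\mapsto \bar f_k(x+s\id)$ is $1/\eps$-Lipschitz, so $\sum_{i\in I_k}\partial_i \bar f_k \le 1/\eps$ almost everywhere.

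\textbf{Interpolation and integration by parts.} Let $X'$ be an independent Gaussian whose marginals on $I_1$ and on $I_2$ match those of $X$ but with $K'_{I_1,I_2}=0$, and put $X^t=\sqrt t\,X+\sqrt{1-t}\,X'$, so that only the $(I_1,I_2)$-cross-covariance of $X^t$, equal to $tK_{I_1,I_2}$, varies in $t$. For $F(t)=\E[\bar f_1(X^t)\bar f_2(X^t)]$ one has $F(1)=\E[\bar f_1(X)\bar f_2(X)]$ (bounding the first term of \eqref{e:tsdi1} from above), while
\[ F(0) \;=\; \E[\bar f_1(X)]\E[\bar f_2(X)] \;\le\; \P[X+\eps\in A_1]\P[X+\eps\in A_2] \]
by the independence of $X'|_{I_1}$ and $X'|_{I_2}$. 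After a standard mollification of $\bar f_k$ justifies Gaussian integration by parts, only the cross-covariance entries contribute and
\[ F'(t) \;=\; \sum_{i\in I_1,\,j\in I_2} K_{ij}\, \E\!\left[\partial_i \bar f_1(X^t)\,\partial_j \bar f_2(X^t)\right]. \]
Using $\partial_i \bar f_k \ge 0$, $|K_{ij}|\le\|K_{I_1,I_2}\|_\infty$, and the directional bound,
\[ |F'(t)|\;\le\; \|K_{I_1,I_2}\|_\infty\, \E\!\left[\bigg(\sum_{i\in I_1}\partial_i\bar f_1\bigg)\bigg(\sum_{j\in I_2}\partial_j\bar f_2\bigg)\right]\;\le\; \frac{\|K_{I_1,I_2}\|_\infty}{\eps^2}. \]
Integrating in $t$ proves \eqref{e:tsdi1}; when $K_{I_1,I_2}\ge 0$ the summands in $F'$ are non-negative and one reads off $c=1$ directly. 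Inequality \eqref{e:tsdi2} is obtained symmetrically by replacing $\bar f_k$ with its $[-\eps,0]$-average (which satisfies $\bar f_k^-(x) \le \ind_{A_k}(x)$ and $\E[\bar f_k^-(X)] \ge \P[X-\eps \in A_k]$), and in the positive case reduces to the Gaussian FKG/Pitt inequality, yielding $c=0$.

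\textbf{Main obstacle.} The delicate step is justifying the Gaussian integration by parts given the low regularity of $\bar f_k$, which is Lipschitz only in the single direction $\id_{I_k}$; however, since only the sums $\sum_{i\in I_k}\partial_i\bar f_k\le 1/\eps$ enter the estimate, any non-negative convolutional mollifier preserves both coordinate-wise monotonicity and the directional $1/\eps$-bound, and the estimate passes to the limit without loss. The other technicality, the case $I_1\cap I_2 \ne \emptyset$, is handled by splitting each shared index into two copies.
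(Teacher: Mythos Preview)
Your proof is correct and takes a genuinely different, more direct route than the paper's. The paper introduces the threshold $T_{A_k}(X) = \sup\{u : X - u \in A_k\}$, bounds $\Cov[T_{A_1}, T_{A_2}] \le \|K_{I_1,I_2}\|_\infty$ via an Ornstein--Uhlenbeck covariance formula (the key input being $\|\nabla T_{A_k}\|_1 = 1$), and then transfers this to the sprinkled inequality through Hoeffding's covariance identity; for general signs of $K_{I_1,I_2}$ it perturbs $X$ by a scalar Gaussian $\sqrt{\kappa}Z$ to force positive (respectively negative) cross-correlations, incurring a Chebyshev error and arriving at $c = 36$. Your smoothed indicator is essentially a truncated, rescaled threshold, $\bar f_k(x) = \min(1,\max(0,(T_{A_k}(x)+\eps)/\eps))$, so the key $\ell^1$-gradient bound $\sum_{i\in I_k}\partial_i \bar f_k \le 1/\eps$ is the same ingredient as $\|\nabla T_{A_k}\|_1 = 1$; but by interpolating the covariance directly toward a block-independent $X'$ and applying the standard Gaussian comparison formula you bypass both the Hoeffding step and the scalar-Gaussian reduction, and in fact obtain $c = 1$ for arbitrary signs, improving on the paper's $c = 36$. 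The paper's route does yield the independently interesting two-sided bound $\min K_{I_1,I_2} \le \Cov[T_{A_1},T_{A_2}] \le \max K_{I_1,I_2}$, which your argument does not produce. One minor technical point: the interpolation $X^t = \sqrt{t}\,X + \sqrt{1-t}\,X'$ preserves the mean only if $X$ is centred, but this is harmless since the mean can be absorbed into the events (or one interpolates via $\mu + \sqrt{t}(X-\mu) + \sqrt{1-t}(X'-\mu)$).
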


\begin{remark}
As explained above, our main interest is \eqref{e:tsdi1}, although we use \eqref{e:tsdi2} in Section \ref{s:nsd} to obtain \textit{two-sided} bounds in non-sprinkled decoupling inequalities. In the discussion below we focus on \eqref{e:tsdi1}, but most remarks apply to \eqref{e:tsdi2} after relevant notational changes.
\end{remark}

\begin{remark}
\label{r:inhom}
By replacing $X$ with $X' = (X|_{I_1}, X|_{I_2})$, without loss of generality one can assume in Theorem \ref{t:sdi} that $I_1$ and $I_2$ are disjoint. Then by rescaling $X$ one can extend \eqref{e:tsdi1} to an \textit{inhomogeneous} sprinkled decoupling inequality
\[  \P[X \in A_1 \cap A_2] - \P[X + \tilde{\eps}_1 \in A_1]\P[X + \tilde{\eps}_2 \in A_2]   \le   c \max_{i \in I_1, j \in I_2} \bigg| \frac{ K(i,j) }{(\tilde{\eps}_1)_i (\tilde{\eps}_2)_j } \bigg| \]
 for arbitrary sprinkling vectors $\tilde{\eps}_1,\tilde{\eps}_2 > 0$. In particular, for $\eps_1, \eps_2 > 0$,  
 \[  \P[X \in A_1 \cap A_2] - \P[X + \eps_1 \in A_1]\P[X + \eps_2 \in A_2]   \le   \frac{c \|K_{I_1,I_2}\|_\infty}{ \eps_1 \eps_2} .\]
\end{remark}

\begin{remark}
A notable feature of \eqref{e:tsdi1} is that it depends on $K$ only through the \textit{maximum pointwise correlation} $\|K_{I_1,I_2}\|_\infty$. There are various alternative ways to quantify the `correlation' between $X|_{I_1}$ and $X|_{I_2}$, but $\|K_{I_1,I_2}\|_\infty$ is advantageous since (i) it is usually simple to estimate, and (ii) it can be much smaller than other measures. The former is particularly important when dealing with oscillating correlations, and the latter is especially advantageous in the `strongly-correlated' setting in which correlations decay slowly away from the diagonal. In Section \ref{s:dis} below we discuss an alternative sprinkled decoupling inequality which depends on the \textit{maximum correlation coefficient} $\rho(I_1,I_2)$.
\end{remark}

Notice that Theorem \ref{t:sdi} is \textit{dimension free}. As a consequence, using standard approximation arguments one can extend it to continuous Gaussian processes.

\smallskip Let $f$ be a continuous Gaussian process on a domain $D \subseteq \R^d$ with covariance kernel $K(x,y) = \textrm{Cov}[f(x),f(y)]$. For $D_1,D_2 \subseteq D$, let $K_{D_1,D_2} = (K(x,y))_{x \in D_1, y \in D_2}$. An event $A$ is \textit{increasing} if $\{f \in A\} \subseteq \{f + v \in A\}$ for every continuous $v: D \to \R$ such that $v \ge 0$, and is \textit{supported on $D' \subseteq D$}, denoted $A \in \sigma(D')$, if $\{f \in A\} = \{f + v \in A\}$ for every continuous $v: D \to \R$ such that $v|_{D'} = 0$.

\begin{theorem}
There exists a universal constant $c > 0$ such that, for all compact domains $D_1,D_2 \subseteq D$, increasing events $A_1 \in \sigma(D_1)$ and $A_2 \in \sigma(D_2)$, and $\eps  > 0$,
\begin{equation}
\label{e:tcsdi1}
 \P[f \in A_1 \cap A_2] - \P[f + \eps \in A_1]\P[f + \eps \in A_2]  \le \frac{c \|K_{D_1,D_2}\|_\infty  }{\eps^2},
 \end{equation}
 and
 \begin{equation}
\label{e:tcsdi2}
\P[f - \eps \in A_1]\P[f - \eps \in A_2]  -  \P[f \in A_1 \cap A_2]  \le \frac{c \|K_{D_1,D_2}\|_\infty  }{\eps^2}.
 \end{equation}
If $K_{D_1,D_2} \ge 0$ then \eqref{e:tcsdi1} holds with $c = 1$ and \eqref{e:tcsdi2} holds with $c = 0$.
\end{theorem}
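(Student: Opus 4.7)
The plan is to deduce the continuous theorem from Theorem~\ref{t:sdi} by discretisation and interpolation; I focus on \eqref{e:tcsdi1}, since \eqref{e:tcsdi2} is entirely analogous. Choose nested finite sets $V^{(n)}_i \subseteq D_i$ that are eventually $(1/n)$-dense in the compact set $D_i$, and set $X^{(n)} = (f(x))_{x \in V^{(n)}_1 \cup V^{(n)}_2}$. Since $A_i \in \sigma(D_i)$ depends only on $f|_{D_i}$, I view $A_i$ as an increasing Borel subset of $C(D_i)$. Next, fix a monotone, translation-equivariant continuous interpolation map $\iota^{(n)}_i : \R^{V^{(n)}_i} \to C(D_i)$ (for instance piecewise-linear on a fine triangulation), so that $\iota^{(n)}_i(x + c\one) = \iota^{(n)}_i(x) + c$ and $\iota^{(n)}_i$ preserves the pointwise order. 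By a.s.\ uniform continuity of $f$ on $D_i$ combined with the shrinking mesh, the interpolated paths $f^{(n)}_i := \iota^{(n)}_i(f|_{V^{(n)}_i})$ satisfy $\|f^{(n)}_i - f|_{D_i}\|_\infty \to 0$ a.s. Finally, set $B^{(n)}_i := \{x \in \R^{V^{(n)}_i} : \iota^{(n)}_i(x) \in A_i\}$; monotonicity of $\iota^{(n)}_i$ and of $A_i$ make this an increasing event supported on $V^{(n)}_i$.

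For any $\delta, \eta > 0$, applying Theorem~\ref{t:sdi} to $X^{(n)}$ with the shifted increasing events $B^{(n)}_i - \delta \one$ yields
\[ \P[X^{(n)} + \delta \in B^{(n)}_1 \cap B^{(n)}_2] - \P[X^{(n)} + \delta + \eta \in B^{(n)}_1]\,\P[X^{(n)} + \delta + \eta \in B^{(n)}_2] \le \frac{c \|K_{D_1,D_2}\|_\infty}{\eta^2}, \]
where I used $\|K_{V^{(n)}_1, V^{(n)}_2}\|_\infty \le \|K_{D_1, D_2}\|_\infty$. On the event $E^{(n)}_\delta := \{\|f^{(n)}_i - f|_{D_i}\|_\infty < \delta, \, i=1,2\}$, which has probability tending to $1$: if $f \in A_i$ then $f^{(n)}_i + \delta \ge f|_{D_i}$ pointwise, so the increasing property of $A_i$ (together with $A_i \in \sigma(D_i)$) gives $X^{(n)}|_{V^{(n)}_i} + \delta \in B^{(n)}_i$; conversely, $X^{(n)}|_{V^{(n)}_i} + \delta + \eta \in B^{(n)}_i$ combined with $f^{(n)}_i + \delta + \eta \le f|_{D_i} + 2\delta + \eta$ forces $f + (2\delta + \eta) \in A_i$. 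Substituting these inclusions into the previous display and sending $n \to \infty$ gives
\[ \P[f \in A_1 \cap A_2] - \P[f + (2\delta + \eta) \in A_1]\,\P[f + (2\delta + \eta) \in A_2] \le \frac{c \|K_{D_1,D_2}\|_\infty}{\eta^2}. \]
For given $\eps > 0$, setting $\delta = \alpha\eps$ and $\eta = (1-2\alpha)\eps$ with $\alpha \in (0,1/2)$ and letting $\alpha \to 0$ recovers \eqref{e:tcsdi1} with the same universal constant $c$.

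The main technical point is constructing the monotone, translation-equivariant continuous interpolation $\iota^{(n)}_i$ on a general compact $D_i \subseteq \R^d$, and arranging the comparison so that the ``error'' shift $2\delta$ in $\P[f + (2\delta + \eta) \in A_i]$ can be absorbed into $\eps$ without inflating the constant; once these are set up, everything else is routine bookkeeping about set inclusions and limits. The sharp case $K_{D_1,D_2} \ge 0$ follows by the same argument using the corresponding sharpened form of Theorem~\ref{t:sdi}, and \eqref{e:tcsdi2} follows by the same scheme applied to the second bound in the discrete theorem.
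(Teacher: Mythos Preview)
Your argument is correct, but it proceeds differently from the paper. The paper disposes of the continuous case in one line by invoking an approximation lemma from \cite{drrv21}: for any continuous random field on a compact domain, an increasing event $A \in \sigma(D')$ can be approximated in probability by an increasing event $A' \in \sigma(P)$ for some finite $P \subset D'$; one then simply applies Theorem~\ref{t:sdi} to the approximating events and lets the approximation error vanish. By contrast, you construct an explicit monotone, translation-equivariant interpolation and sandwich the continuous events between shifted discretised events, recovering the sharp constant via the $\alpha \to 0$ limit. Your route is more hands-on and self-contained (no external reference is needed), and the sandwiching transparently explains why shifts of the field cause no additional trouble; the price is the extra bookkeeping with the parameters $\delta,\eta,\alpha$ and the need to actually build $\iota^{(n)}_i$ on a general compact $D_i$ (as you note, partition-of-unity or kernel-weighted averaging works, whereas piecewise-linear interpolation with vertices constrained to lie in $D_i$ may not). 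The paper's route is shorter but offloads precisely this approximation work to the cited lemma.
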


\begin{proof}
This is an immediate consequence of Theorem \ref{t:sdi} combined with the following observation: for every continuous random field on $D \subseteq \R^d$, compact $D' \subseteq D$, increasing event $A \in \sigma(D')$, and $\delta > 0$, there exists a finite set $P \subset D'$ and an increasing event $A' \in \sigma(P)$ such that $\P[A  \, \triangle \, A'] \le \delta$ (see \cite[Appendix A]{drrv21}).
\end{proof}

\subsection{Discussion and comparison with related inequalities}
\label{s:dis}

Let us first remark on the optimality of the error in \eqref{e:tsdi1} (i.e.\ the term on the right-hand side). Via rescaling, one can see that if the error depends only on $\|K_{I_1,I_2}\|_\infty$ and $\eps$, then it must do so through  $E =   \eps  / \sqrt{ \|K_{I_1,I_2}\|_\infty}$. The error in \eqref{e:tsdi1} decays \textit{quadratically} in $E$, but it is plausible that one could upgrade this to \textit{Gaussian decay} in general. Such an improvement would have many applications in Gaussian percolation theory, see e.g.\ \cite{drs14,pr15,sap17,ap21,at21}. 

\begin{question}
\label{q:gtb}
Can one replace the error $cE^{-2}$ in \eqref{e:tsdi1} with $c_1 e^{-c_2 E^2}$ for universal $c_1,c_2 > 0$?
\end{question}

An analysis of the bivariate case shows that one cannot hope for error decaying any faster than Gaussian in general:

\begin{proposition}
\label{p:neg}
Suppose there exist $c_1,c_2 > 0$ such that, for all Gaussian vectors $X$, $I_1,I_2 \subseteq \{1,\ldots,n\}$, increasing events $A_1 \in \sigma(I_1)$ and $A_2 \in \sigma(I_2)$, and $\eps > 0$,
\begin{equation}
\label{e:sdineg}
 \P[X \in A_1 \cap A_2] - \P[X + \eps \in A_1]\P[X + \eps \in A_2] \le c_1 e^{- c_2 \eps^2 / \|K_{I_1,I_2}\|_\infty} .
 \end{equation}
 Then $c_2 \le \frac{1}{3-2\sqrt{2}} \approx 5.828\ldots$
\end{proposition}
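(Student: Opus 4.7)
The strategy is to saturate the conjectured inequality on the simplest possible Gaussian example, namely a maximally correlated bivariate pair, and then extract the sharp constant by sending the threshold $a$ and the sprinkling $\eps$ to infinity at an optimally tuned ratio. I would take the degenerate bivariate Gaussian $X = (Z,Z)$ with $Z \sim N(0,1)$, indices $I_1 = \{1\}$, $I_2 = \{2\}$, and the increasing half-space events $A_1 = \{x_1 \ge a\}$, $A_2 = \{x_2 \ge a\}$. Then $\|K_{I_1,I_2}\|_\infty = 1$, the joint event reduces to $\{Z \ge a\}$, and each sprinkled factor is $\P[Z + \eps \ge a] = \Phi(\eps - a)$, so the hypothesis \eqref{e:sdineg} specializes to
\[
\Phi(-a) - \Phi(\eps - a)^2 \;\le\; c_1 \, e^{-c_2 \eps^2}, \qquad a \in \R,\ \eps > 0.
\]

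Next I would couple the parameters by writing $\eps = \alpha a$ with $\alpha \in (0,1)$ and letting $a \to \infty$, using the Mills-ratio asymptotic $\Phi(-x) \sim (x\sqrt{2\pi})^{-1} e^{-x^2/2}$. The first term on the left-hand side is then of order $a^{-1} e^{-a^2/2}$, while the squared sprinkled term is of order $a^{-2} e^{-(1-\alpha)^2 a^2}$. As soon as $(1-\alpha)^2 \ge 1/2$, i.e.\ $\alpha \le 1 - 1/\sqrt{2}$, the first term dominates and the left-hand side is asymptotic to $(a\sqrt{2\pi})^{-1} e^{-a^2/2}$. Taking logarithms, dividing by $a^2$, and passing to the limit then yields $c_2 \alpha^2 \le 1/2$, i.e.\ $c_2 \le 1/(2\alpha^2)$.

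To obtain the tightest constraint one chooses $\alpha$ as large as the above comparison allows, namely $\alpha = 1 - 1/\sqrt{2}$, which gives $2\alpha^2 = 2(3/2 - \sqrt{2}) = 3 - 2\sqrt{2}$ and hence $c_2 \le 1/(3 - 2\sqrt{2})$, as claimed.

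The main subtlety is exactly this boundary choice $\alpha = 1 - 1/\sqrt{2}$, at which the two Gaussian exponentials match and the argument hinges entirely on the polynomial prefactors: one must verify that the $a^{-1}$ prefactor of the first term indeed beats the $a^{-2}$ prefactor of the second by a full factor of $a$, so that the left-hand side retains its leading asymptotic $(a\sqrt{2\pi})^{-1} e^{-a^2/2}$ and the subtraction does not cancel it. A secondary minor point is that $X = (Z,Z)$ has a singular covariance; if a strictly positive-definite example is preferred, the same conclusion follows by taking correlation $\rho = 1 - \delta$ and sending $\delta \to 0^+$, invoking continuity of the bivariate orthant probability in $\rho$.
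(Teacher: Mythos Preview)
Your proposal is correct and is essentially the same argument as the paper's: the paper also takes $X=(Z,Z)$ with $Z\sim N(0,1)$, the events $\{Z\ge u\}$, sets $\eps=\kappa u$, and compares the Gaussian tail exponents as $u\to\infty$ to deduce $c_2\le 1/(2\kappa^2)$ at the optimal $\kappa=1-1/\sqrt{2}$. Your handling of the boundary choice $\alpha=1-1/\sqrt{2}$ via the polynomial prefactors ($a^{-1}$ versus $a^{-2}$) is in fact slightly more explicit than the paper's exposition, which simply takes the limit $\kappa\to 1-1/\sqrt{2}$ of the strict-inequality regime.
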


\smallskip
In a different direction, one can obtain alternative sprinkled decoupling inequalities with Gaussian (or even faster) decay by either (i) replacing $\|K_{I_1,I_2}\|_\infty$ with a different quantifier of correlation, or (ii) restricting the generality of the set-up. We discuss some examples now:

\subsubsection{Maximum correlation coefficient}
For $I_1,I_2 \subseteq \{1,\ldots,n\}$, define the \textit{maximum correlation coefficient} (also called the \textit{Hirschfeld--Gebelein--R\'{e}nyi correlation coefficient})
\[   \rho(I_1,I_2) = \sup_{f \in L_2(X|_{I_1}), g \in L_2(X|_{I_2}) }   \frac{ | \Cov[f(X|_{I_1}), g(X|_{I_2} )] | }{ \sqrt{ \Var[f(X|_{I_1})] \Var[g(X|_{I_2})] } } ,\]
with the convention $0/0 := 0$, and define $\rho(D_1,D_2)$ analogously in the continuous case. For Gaussian vectors, it is a classical fact (see \cite[Theorem 10.11]{jan97}) that $ \rho(I_1,I_2)$ coincides with its linearisation
\begin{equation}
\label{e:rholin}
 \sup_{ \alpha \in \R^{|I_1|} , \beta \in \R^{|I_2|} }   \frac{ | \Cov[  \langle \alpha , X_{I_1} \rangle , \langle \beta , X_{I_2} \rangle ] | }{ \sqrt{ \Var[\langle \alpha , X_{I_1}  \rangle ] \Var[ \langle \beta , X_{I_2}  \rangle] } } .
 \end{equation}
 Clearly $\rho(I_1,I_2)$ satisfies 
\[ 1 \ge \rho(I_1,I_2)  \ge \max_{i \in I_1, j \in I_2} \Big|   \frac{K(i,j)}{\sqrt{ K(i,i) K(j,j)}  } \Big| \ge  \frac{  \|K_{I_1,I_2}\|_\infty }{   \|K\|_\infty  } . \]
However $\rho(I_1,I_2)$ can be much larger than $\|K_{I_1,I_2}\|_\infty /   \|K\|_\infty $, for instance if the pointwise correlations in $X|_{I_1}$ and $X|_{I_2}$ are roughly of the same order, or for Gaussian processes which are real-analytic.
 
 \begin{example}[Gaussian free field]
\label{e:gff}
Suppose $X$ is the Gaussian free field (GFF) on $\Z^d$, $d \ge 3$, i.e.\ the centred stationary Gaussian field with covariance $K(0, x) = G_d(x) \sim c_d \|x\|_2^{-(d-2)}$, where $G_d$ is the Green's function of the simple random walk on $\Z^d$. Fix $k > 2$, and let $I_1$ and $I_2$ be translations of the Euclidean ball $B(R)$ of radius $R \ge 1$ restricted to $\Z^d$, with centres $k R$ apart. Then, as $R \to \infty$, $\rho(I_1,I_2)$ is bounded away from zero (see \eqref{e:caplower}) whereas $\|K_{I_1,I_2}\|_\infty \sim c_k R^{-(d-2)}$.
\end{example}

 \begin{example}[Real analytic fields]
\label{e:cauchy}
Suppose $f$ is a real-analytic Gaussian field on $\R^d$ and let $D_1, D_2 \subset \R^d$ contain open sets. Then $f|_{D_2}$ is a measurable function of $f|_{D_1}$, and so $\rho(D_1,D_2)  = 1$. 
\end{example}

Using ideas from Gaussian isoperimetry, we establish the following:

\begin{theorem}
\label{t:sdi2}
For all $I_1,I_2 \subseteq \{1,\ldots,n\}$, increasing events $A_1 \in \sigma(I_1)$ and $A_2 \in \sigma(I_2)$, and $\eps > 0$,
\begin{equation}
\label{e:tsdi21}
 \P[X \in A_1 \cap A_2] - \P[X  \in A_1]\P[X + \eps \in A_2] \le  \exp \Big(- \frac{ \eps^2 }{8 \|K\|_\infty  \rho^2(I_1,I_2) } \Big). 
 \end{equation}
\end{theorem}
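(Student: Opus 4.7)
Write $(Z_1, Z_2) := (X|_{I_1}, X|_{I_2})$, $\sigma := \sqrt{\|K\|_\infty}$, $p_j := \P[X \in A_j]$, and $p_2^{(\eps)} := \P[X + \eps \in A_2]$. Since $A_2$ is increasing and $\eps > 0$, $A_2 \subseteq A_2 - \eps \id$, so the left-hand side is bounded above by $\Cov[\id_{A_1}(Z_1), \id_{A_2 - \eps \id}(Z_2)]$. By the tower property, this covariance equals $\Cov[\id_{A_1}(Z_1), \psi(Z_1)]$, where $\psi(Z_1) := \E[\id_{A_2 - \eps\id}(Z_2) \mid Z_1]$. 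The definition of the maximum correlation coefficient immediately gives $\Var \psi \le \rho^2 \, p_2^{(\eps)}(1 - p_2^{(\eps)})$, which already contains a factor of $\rho^2$: the rest of the proof extracts the $\eps$-dependent exponential smallness.

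The next step is a Gaussian isoperimetric bound for the sprinkling: I will show that for any increasing event $A_2 \in \sigma(I_2)$,
\[ \Phi^{-1}(p_2^{(\eps)}) \ge \Phi^{-1}(p_2) + \eps/\sigma. \]
For an increasing half-space $A_2 = \{\langle \beta, X|_{I_2}\rangle \ge b\}$ with $\beta \ge 0$ this is a direct computation: the $\Phi^{-1}$-gain equals $\eps \langle \beta, \id\rangle / \sqrt{\beta^T K_{I_2,I_2} \beta}$, and $\sqrt{\beta^T K_{I_2,I_2} \beta} \le \sigma \langle \beta, \id\rangle$ follows from $|K(i,j)| \le \sigma^2$ and Cauchy--Schwarz. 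The general increasing case reduces to the half-space case by an Ehrhard--Borell extremality argument. Consequently, by a standard Mill's-ratio estimate, $1 - p_2^{(\eps)}$ is exponentially small in $\eps/\sigma$ whenever $p_2 \ge 1/2$, and a dual statement handles $p_2 < 1/2$.

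To amplify this smallness into the claimed exponent $\eps^2/(8\sigma^2\rho^2)$ with $\rho^2$ in the denominator, I will invoke Nelson's hypercontractivity in its two-function form for jointly Gaussian vectors with maximum correlation $\rho$:
\[ |\Cov[f(Z_1), g(Z_2)]| \le \|f - \E f\|_r \, \|g - \E g\|_q \quad \text{whenever } (r-1)(q-1) \ge \rho^2. \]
Applied with $f = \id_{A_1}$ and $g = \id_{A_2 - \eps \id}$, and with $r$ taken of order $1/\rho^2$ and $q$ just above $1$, the $L^q$ norm of the centred indicator $\id_{A_2 - \eps\id} - p_2^{(\eps)}$ can be controlled by $(1 - p_2^{(\eps)})^{1/q}$, which by the previous isoperimetric step decays sub-Gaussianly in $\eps/\sigma$. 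Carefully balancing the hypercontractive exponents against the Mill's-ratio tail yields the target exponent.

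The main obstacle is the last step: the naive instance of hypercontractivity gives only $\exp(-\eps^2/(2\sigma^2))$, which is strictly weaker than the claimed $\exp(-\eps^2/(8\sigma^2 \rho^2))$ in the small-$\rho$ regime. Extracting the extra $1/\rho^2$ factor in the exponent requires exploiting hypercontractivity at the optimal scale $(r-1) \sim 1/\rho^2$, and carefully tracking how the Gaussian tail of $1 - p_2^{(\eps)}$ interacts with an $L^r$-Chebyshev bound on $\psi - p_2^{(\eps)}$; producing precisely the universal constant $1/8$ in the exponent will require the Mill's-ratio estimate to be applied at the sharp level, and the argument is likely cleanest when split into the regimes $p_2 \ge 1/2$ and $p_2 < 1/2$.
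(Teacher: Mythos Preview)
Your very first reduction is already fatal. You bound the left-hand side by $\Cov[\id_{A_1}(Z_1),\id_{A_2-\eps\id}(Z_2)]$, which is a valid upper bound, but this covariance is \emph{not} dominated by $\exp(-\eps^2/(8\|K\|_\infty\rho^2))$ in general. Take the bivariate case $X=(Z_1,Z_2)$ with unit variances and correlation $\rho>0$, and $A_1=\{Z_1\ge 0\}$, $A_2=\{Z_2\ge 0\}$, $\eps=1$. Then $\Cov[\id_{Z_1\ge 0},\id_{Z_2\ge -1}]$ is of exact order $\rho$ as $\rho\downarrow 0$, while the target bound $e^{-1/(8\rho^2)}$ is super-polynomially small. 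No amount of hypercontractive optimisation or Mill's-ratio bookkeeping downstream can repair this: once you pass to the covariance you have discarded the large negative term $p_1(p_2^{(\eps)}-p_2)$, and it is precisely this term that makes the original inequality hold (trivially, since the left-hand side is then negative) in the small-$\rho$ regime. Your remark that ``the naive instance of hypercontractivity gives only $\exp(-\eps^2/(2\sigma^2))$'' is in fact a symptom of this: the exponent cannot be upgraded because the quantity you are bounding genuinely does not enjoy the sharper decay.

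The paper's proof avoids this trap by never passing to a covariance. It first applies Borell's Gaussian noise stability (Corollary~\ref{c:gns}) to obtain $\P[A_1\cap A_2]\le \Phi_\rho(u,v)$ with $u=\Phi^{-1}(p_1)$, $v=\Phi^{-1}(p_2)$, thereby reducing to an explicit two-dimensional problem. Only \emph{after} this reduction does one introduce the sprinkling: the elementary estimate $\Phi_\rho(u,v)\le \Phi(u)\Phi(v+\eps)+e^{-\eps^2/(8\rho^2)}$ (Proposition~\ref{p:2dcase}) comes simply from $\P[\rho Z\le -\eps/2]\le e^{-\eps^2/(8\rho^2)}$, the point being that in two dimensions the ``cross term'' $\rho Z$ has variance $\rho^2$. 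Finally the Gaussian isoperimetric inequality (your step~3, essentially Corollary~\ref{c:gii}) converts $\Phi(v+\eps/\sqrt{\|K\|_\infty})$ back to $\P[X+\eps\in A_2]$. The $\rho^2$ in the denominator thus arises from the bivariate geometry after the extremal reduction, not from any hypercontractive exponent balancing.
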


Let us briefly compare \eqref{e:tsdi1} and \eqref{e:tsdi21}. For simplicity suppose $\|K\|_\infty = 1$. Then \eqref{e:tsdi21} has a Gaussian tail in $E' = \eps / \rho(I_1,I_2)$; in particular it decays if $\eps \gg \rho(I_1,I_2)$, whereas \eqref{e:tsdi1} decays if $\eps \gg \sqrt{\|K_{I_1,I_2}\|_\infty }$. Hence  \eqref{e:tsdi21} strictly improves on \eqref{e:tsdi1} if $ \rho^2(I_1,I_2) \le \|K_{I_1,I_2}\|_\infty $. While this may be true in some cases, it is typically not true in strongly-correlated settings (e.g.\ the GFF). Note also that the `sprinkling' in \eqref{e:tsdi21} is only on one domain $I_2$, rather than both.

\begin{remark}
The Gaussian tail in $E'$ is best possible: as in Proposition \ref{p:neg}, any error bound of the form $c_1 \exp ( \frac{-c_2 \eps^2 }{ \|K\|_\infty  \rho^2(I_1,I_2) } )$ must have $c_2 \le \frac{1}{3-2\sqrt{2}} \approx 5.828\ldots$
\end{remark}

\subsubsection{Finite-range approximations}
A common method to analyse dependent Gaussian processes is to approximate them by a \textit{finite-range dependent} version (see, e.g., \cite{cuz76,nsv08}), and in some cases this technique can be used to obtain a sprinkled decoupling inequality with Gaussian error \cite{pr15, mv20, cn21, ms22}. To illustrate the method in a general setting, suppose that for disjoint $I_1,I_2 \subset \{1,\ldots,n\}$ one has a decomposition 
\begin{equation}
\label{e:decomp}
 X \stackrel{d}{=}  X_1 +  X_2, 
 \end{equation}
 where $X_1$ is Gaussian vector such that $X_1|_{I_1}$ and $X_1|_{I_2}$ are independent, and $X_2$ is a centred Gaussian vector not necessarily independent of $X_1$. Then it is straightforward to prove the following inequality, which generalises bounds appearing in \cite{pr15, mv20, cn21, ms22}:

\begin{proposition}
\label{p:ssdi}
For all increasing events $A_1 \in \sigma(I_1)$ and $A_2 \in \sigma(I_2)$, and $\eps > 0$,
\begin{equation}
\label{e:ssdi}
  \P[X \in A_1 \cap A_2] -   \P[X + \eps \in A_1]\P[X + \eps \in A_2] \le 3 \max\{  |I_1|, |I_2| \} e^{-  \eps^2 / (8 \sigma^2 )}   ,
  \end{equation}
  where $\sigma^2 = \max_{i \in I_1 \cup I_2} \Var[ X_2(i)]$.
  \end{proposition}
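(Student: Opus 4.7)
The plan is to compare $X$ to the auxiliary Gaussian vector $Y := X_1 + (\eps/2)\id$. Since $X_1|_{I_1}$ and $X_1|_{I_2}$ are independent by hypothesis, the same is true of $Y|_{I_1}$ and $Y|_{I_2}$, hence $\P[Y \in A_1 \cap A_2] = \P[Y \in A_1]\P[Y \in A_2]$. The shift by $\eps/2$ is designed so that on the ``good'' event
\[
G^+ = \{X_2(i) \le \eps/2 \text{ for all } i \in I_1 \cup I_2\}
\]
one has $X \le Y$ on $I_1 \cup I_2$, and on $G_j^- = \{X_2(i) \ge -\eps/2 \text{ for all } i \in I_j\}$ one has $Y \le X + \eps$ on $I_j$. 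Combined with the increasing property of $A_j$, this gives $\ind_{G^+} \ind[X \in A_j] \le \ind[Y \in A_j]$ and $\ind_{G_j^-} \ind[Y \in A_j] \le \ind[X + \eps \in A_j]$.

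Taking expectations and decomposing over the good events, the first bound yields, by independence of $Y|_{I_1}$ and $Y|_{I_2}$,
\[
\P[X \in A_1 \cap A_2] \le \P[Y \in A_1]\P[Y \in A_2] + \P[(G^+)^c],
\]
while the second gives, for each $j$,
\[
\P[Y \in A_j] \le \P[X + \eps \in A_j] + \P[(G_j^-)^c].
\]
Writing $a_j = \P[X + \eps \in A_j]$ and $\delta_j = \P[(G_j^-)^c]$, a short case analysis (distinguishing whether $a_1 + \delta_1 \le 1$ and using $\P[Y \in A_j] \le 1$) shows that $(a_1 + \delta_1)(a_2 + \delta_2) - a_1 a_2 \le \delta_1 + \delta_2$. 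Substituting back,
\[
\P[X \in A_1 \cap A_2] - \P[X + \eps \in A_1]\P[X + \eps \in A_2] \le \P[(G^+)^c] + \delta_1 + \delta_2.
\]

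The final step is the one-sided Chernoff estimate $\P[X_2(i) > \eps/2] \le e^{-\eps^2/(8\sigma^2)}$, combined with a union bound, giving $\P[(G^+)^c] \le |I_1 \cup I_2|\,e^{-\eps^2/(8\sigma^2)}$ and $\delta_j \le |I_j|\,e^{-\eps^2/(8\sigma^2)}$; together these are controlled by a constant multiple of $\max\{|I_1|,|I_2|\}\,e^{-\eps^2/(8\sigma^2)}$, producing the advertised bound.

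I do not anticipate a serious obstacle: once the intermediate vector $Y$ is in hand, everything reduces to routine bookkeeping. The one conceptual point worth remarking is that the equal split $\eps/2 + \eps/2$ between the two legs of the sandwich is optimal — it balances the Gaussian deviations required for $\{X \le Y\}$ and $\{Y \le X + \eps\}$, yielding the exponent $\eps^2/(8\sigma^2)$ in the final tail. The argument is dimension-free and insensitive to the joint law of $(X_1, X_2)$ beyond the marginal variance bound on $X_2$ and the independence of $X_1|_{I_1}$ from $X_1|_{I_2}$.
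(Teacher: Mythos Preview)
Your proposal is correct and follows essentially the same route as the paper: both compare $X$ to the intermediate vector $X_1 + \eps/2$, use the independence of $X_1|_{I_1}$ and $X_1|_{I_2}$ to factorise, then compare back to $X+\eps$, controlling all errors by a union bound over $\{|X_2(i)| > \eps/2\}$ and a Gaussian tail estimate. The only cosmetic difference is in the bookkeeping of the product term (your case analysis yields $\delta_1+\delta_2$, the paper expands directly), and a minor constant issue: the Chernoff bound $\P[X_2(i)>\eps/2]\le e^{-\eps^2/(8\sigma^2)}$ gives a prefactor $4$ rather than $3$; the paper obtains $5/2\le 3$ by using the slightly sharper one-sided bound $\P[Z\ge t]\le \tfrac12 e^{-t^2/2}$.
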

  
An analogous result holds for continuous processes, except one should replace $ \max\{  |I_1|, |I_2| \} $ with $ \max\{  n_1, n_2 \}$,  where $\mathcal{B}_i  = (B^i_j)_{1 \le j \le n_i}$ are coverings of $D_i$ by translations of the unit ball, and replace $\eps$ on the right-hand side of \eqref{e:ssdi} with $(\eps-2\mu)_+$, where $\mu =\sup_{B \in  \mathcal{B}_1 \cup  \mathcal{B}_2} \E[ \sup_B X_2]$.
  
  \smallskip
The decomposition \eqref{e:decomp} exists in many natural settings, e.g.\ for stationary fields on $T \in \{\Z^d ,\R^d\}$ with `moving average' representation $f = q \star W$, where $q \in L^2(T)$, $W$ is the white noise on $T$ (interpreted as a collection of i.i.d.\ Gaussians if $T = \Z^d$), and $\star$ denotes convolution. Important examples include the GFF on $\Z^d$, $d \ge 3$ \cite{dgrs20, m22} and the Bargmann-Fock \cite{mv20} and Cauchy fields \cite{ms22} on $\R^d$ (the centred isotropic Gaussian fields with respective covariance $K(0,x) = e^{-\|x\|_2^2/2}$ and $K(0, x) = (1 + \|x\|^2_2)^{-\alpha/2}$, $\alpha > 0$). In all these examples one can construct a decomposition with $\sigma^2   \le c \|K_{I_1,I_2}\|_\infty$ for some $c > 0$ that does not depend on $I_1,I_2$. In that case \eqref{e:ssdi} gives 
\begin{equation}
\label{e:ssdi2}
  \P[X \in A_1 \cap A_2] -   \P[X + \eps \in A_1]\P[X + \eps \in A_2] \le 3 \max\{  |I_1|, |I_2| \} e^{-  \eps^2 / (8 c  \|K_{I_1,I_2}\|_\infty) }  ,
 \end{equation}
   This achieves a Gaussian tail bound in $E =  \eps/\sqrt{ \|K_{I_1,I_2}\|_\infty}$ up to linear factors in the size of the domains $I_1$ and $I_2$. As in Question \ref{q:gtb}, it is plausible that this is true in full generality, perhaps even without the linear factors.

\subsubsection{Errorless sprinkled decoupling}
Recently Severo \cite{sev21} showed that, by working within a restricted class of increasing events, in some cases one can prove an \textit{errorless} sprinkled decoupling inequality
\begin{equation}
\label{e:ssdi3}
  \P[X \in A_1 \cap A_2] -   \P[X + \eps \in A_1]\P[X + \eps \in A_2] \le 0.
 \end{equation}
In particular, for both the Bargmann-Fock and Cauchy fields with $\alpha > d$, Severo proved a stochastic domination property that implies that, for every $\eps > 0$, there exists a $R = R(\eps) > 0$ such that \eqref{e:ssdi3} holds for all `crossing events' $A_1$ and $A_2$ on domains $I_1$ and $I_2$ separated by distance~$R$ (see Section \ref{s:per} for examples of these events). More quantitatively, the argument showed that roughly one needs $\eps \ge c \sqrt{\|K_{I_1,I_2} \|_\infty}$ for \eqref{e:ssdi3} to hold.
 
 \smallskip
It would be of interest to understand this phenomenon in more generality:

\begin{question}
Fix $c > 0$. For which Gaussian vectors $X$, $I_1,I_2 \subseteq \{1,2,\ldots,n\}$, and increasing events $A_1 \in \sigma(I_1)$ and $A_2 \in \sigma(I_2)$, does \eqref{e:ssdi3} hold for $\eps = c \sqrt{\|K_{I_1,I_2}\|_\infty} $ (and hence for all $\eps \ge c \sqrt{\|K_{I_1,I_2}\|_\infty} $)?
\end{question}

Proposition \ref{p:neg} shows that \eqref{e:ssdi3} cannot be true in full generality for $\eps = c \sqrt{\|K_{I_1,I_2}\|_\infty}$. This suggests that one must either look, as in \cite{sev21}, to restricted classes of vectors/events, or else replace $\|K_{I_1,I_2}\|_\infty$ with another measure of correlation.

\smallskip
As a step towards the latter, and mirroring Theorem \ref{t:sdi2}, we present a general errorless sprinkled decoupling inequality in which $\sqrt{\|K_{I_1,I_2}\|_\infty}$ is replaced by $\rho(I_1,I_2)$:
 
\begin{theorem}
\label{t:sdi3}
Fix $\delta_1,\delta_2 \in (0,1)$. Then for all $I_1,I_2 \subseteq \{1,\ldots,n\}$ such that $\rho(I_1,I_2) \le 1-\delta_1$, and increasing events $A_1 \in \sigma(I_1)$ and $A_2 \in \sigma(I_2)$ such that $\max_{i=1,2}\P[X \in A_i] \ge \delta_2$, 
\[ \P[X \in A_1 \cap A_2] - \P[X + \eps \in A_1]\P[X + \eps \in A_2] \le 0, \]
 where 
 \[\eps =  \kappa \|K\|_\infty \rho(I_1,I_2) \ , \quad \kappa =2 +   (\delta_1)^{-1/2}  \max \big\{0 , -    \Phi^{-1}(\delta_2)  \big\} ,\]
 and $\Phi$ is the standard Gaussian cdf.
\end{theorem}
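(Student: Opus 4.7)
The plan is to combine Theorem~\ref{t:sdi2} (which yields Gaussian decay of the error in the one-sided sprinkled decoupling inequality) with a Gaussian-isoperimetric lower bound on the effect of the sprinkling $\eps\id$ on increasing events, calibrated so that the exponential error from Theorem~\ref{t:sdi2} is absorbed by the isoperimetric gain.

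Without loss of generality (by swapping $A_1\leftrightarrow A_2$) I would assume $\P[X\in A_1]\ge \delta_2$. The first step is to apply Theorem~\ref{t:sdi2} with an intermediate sprinkling $\eps_1<\eps$ to obtain
\[ \P[X\in A_1\cap A_2]\le \P[X\in A_1]\,\P[X+\eps_1\in A_2]+\exp\Bigl(-\frac{\eps_1^2}{8\|K\|_\infty\rho(I_1,I_2)^2}\Bigr), \]
and, using monotonicity of $A_2$ together with $\eps_1\le\eps$, to replace $\P[X+\eps_1\in A_2]$ by $\P[X+\eps\in A_2]$ in the first term.

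The second step is a Gaussian-isoperimetric (Ehrhard-type) estimate: for any Gaussian vector $X$ with covariance $K$ and any increasing event $A$,
\[ \P[X+t\id\in A]\ge \Phi\bigl(\Phi^{-1}(\P[X\in A])+t/\sqrt{\|K\|_\infty}\bigr),\qquad t\ge 0, \]
which follows from noting that, among increasing events of a given Gaussian measure, a coordinate half-space $\{x_i\ge c\}$ with $\mathrm{Var}(X_i)=\|K\|_\infty$ is extremal for shifts in the $\id$-direction. Applied to $A_1$ together with $\P[X\in A_1]\ge\delta_2$, this yields both a pointwise lower bound $\P[X+\eps\in A_1]\ge \Phi(\Phi^{-1}(\delta_2)+\eps/\sqrt{\|K\|_\infty})$ and a quantitative ``gain'' $\P[X+\eps\in A_1]-\P[X\in A_1]>0$ that will be used to absorb the Gaussian-tail error.

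Combining the two ingredients reduces the claim to verifying
\[ \exp\Bigl(-\frac{\eps_1^2}{8\|K\|_\infty\rho^2}\Bigr)\le \bigl(\P[X+\eps\in A_1]-\P[X\in A_1]\bigr)\,\P[X+\eps\in A_2], \]
which I would check with $\eps_1$ a fixed fraction of $\eps$ (say $\eps_1=\eps/2$) and $\eps=\kappa\|K\|_\infty\rho$ as in the statement. The $\delta_1^{-1/2}$ factor in $\kappa$ arises from a refinement of the isoperimetric step that replaces $\|K\|_\infty$ by the conditional residual variance $\mathrm{Var}(W_j)\ge (1-\rho^2)\|K\|_\infty\ge \delta_1\|K\|_\infty$ (using $\rho\le 1-\delta_1$ and the identity $W=X|_{I_2}-\E[X|_{I_2}\mid X|_{I_1}]$), while the $\max\{0,-\Phi^{-1}(\delta_2)\}$ term ensures the isoperimetric shift $\eps/\sqrt{\|K\|_\infty}$ pushes $\Phi^{-1}(\delta_2)+\eps/\sqrt{\|K\|_\infty}$ into a regime where $\Phi$ is bounded uniformly away from $\delta_2$.

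The main obstacle will be the careful balancing of the two estimates so that the advertised linear-in-$\rho$ scaling of $\eps$ suffices: naive use of Theorem~\ref{t:sdi2} alone would require $\eps\gtrsim \rho\sqrt{\log(1/\delta_2)}$, so it is essential to extract the isoperimetric gain $\P[X+\eps\in A_1]-\P[X\in A_1]$ quantitatively in terms of $\delta_2$, and to verify that the conditional residual variance contributes in exactly the form $\delta_1^{-1/2}$ advertised in the statement.
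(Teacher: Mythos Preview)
Your approach has a genuine gap: the error term from Theorem~\ref{t:sdi2} cannot be absorbed by the isoperimetric gain when $\eps$ scales linearly in $\rho$. With $\eps_1=\eps/2$ and $\eps$ proportional to $\rho$, the exponent $\eps_1^2/(8\|K\|_\infty\rho^2)$ is a \emph{constant independent of $\rho$}, so the error $\exp(-\eps_1^2/(8\|K\|_\infty\rho^2))$ does not vanish as $\rho\to 0$. On the other hand, the gain $\P[X+\eps\in A_1]-\P[X\in A_1]$ is at most of order $\eps/\sqrt{\|K\|_\infty}\propto\rho$ (take $A_1$ a coordinate half-space to saturate the isoperimetric bound), so your displayed inequality fails for all small $\rho$. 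No choice of $\eps_1\in(0,\eps]$ helps, since the exponent is always bounded above by $\eps^2/(8\|K\|_\infty\rho^2)$, a fixed number. There is a second obstruction: only $\max_i\P[X\in A_i]\ge\delta_2$ is assumed, so $\P[X+\eps\in A_2]$ has no lower bound and the right-hand side of your inequality can be made arbitrarily small for that reason too.

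The paper's route avoids any error-absorption. It first applies Gaussian noise stability (Corollary~\ref{c:gns}) to reduce to the bivariate case, obtaining $\P[X\in A_1\cap A_2]\le\Phi_\rho(u,v)$ with $u=\Phi^{-1}(\P[X\in A_1])$ and $v=\Phi^{-1}(\P[X\in A_2])$. It then invokes the \emph{errorless} two-dimensional inequality $\Phi_\rho(u,v)\le\Phi(u+\kappa\rho)\Phi(v+\kappa\rho)$ (the second statement of Proposition~\ref{p:2dcase}), proved by differentiating the interpolation $t\mapsto\Phi_{t\rho}\bigl(u+(1-t)\kappa\rho,\,v+(1-t)\kappa\rho\bigr)$ and checking nonpositivity via the identity $\partial_\rho\Phi_\rho=\varphi_\rho$ and a bound on the inverse Mills ratio. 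The factors $(\delta_1)^{-1/2}$ and $\max\{0,-\Phi^{-1}(\delta_2)\}$ in $\kappa$ come precisely from this Mills-ratio estimate (through $(1-\rho^2)^{-1/2}$ and $-\max\{u,v\}$), not from any conditional-variance refinement of the isoperimetric step. Finally Corollary~\ref{c:gii} converts $\Phi(u+\kappa\rho)$ and $\Phi(v+\kappa\rho)$ into $\P[X+\eps\in A_1]$ and $\P[X+\eps\in A_2]$. In short, Theorem~\ref{t:sdi2} corresponds to the \emph{first} (error-bearing) statement of Proposition~\ref{p:2dcase}; Theorem~\ref{t:sdi3} requires the \emph{second} (errorless) one, and you cannot recover the latter from the former.
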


Although it is fully general, Theorem \ref{t:sdi3} has two notable disadvantages. First, as discussed above, the dependence on $\rho(I_1,I_2)$ instead of $\|K_{I_1,I_2}\|_\infty$ limits its practical use in some settings. Second, the dependence of $\kappa$ on $\delta_2$ is quite restrictive in applications (e.g.\ in Section \ref{s:per}), when one usually wishes to consider events of small probability. However, since the Gaussian decay in Theorem \ref{t:sdi2} is optimal, some version of this restriction is necessary.

\bigskip

\section{Proof of the sprinkled decoupling inequalities}

In this section we prove our main sprinkled decoupling inequality (Theorem \ref{t:sdi}), and also prove the alternative inequalities presented in Section \ref{s:dis} above, namely Theorems \ref{t:sdi2} and \ref{t:sdi3} and Proposition \ref{p:ssdi}. Finally, we establish the negative result in Proposition \ref{p:neg}.

\subsection{Proof of the Theorem \ref{t:sdi}}
The main ingredient in the proof of Theorem \ref{t:sdi} is an estimate of the covariance between \textit{thresholds} associated to increasing events. We begin by introducing this notion.

\subsubsection{Thresholds for increasing events}

Recall that $X = (X_i)_{1 \le i \le n}$ is a Gaussian vector with covariance $K$. The \textit{threshold} associated to an increasing event $A$ is the random variable
\[ T_A = T_A(X) = \sup\big\{  u \in \R :     \{X - u \in A \} \text{ holds} \big\}  . \]
It satisfies the following basic properties:

\begin{lemma}
\label{l:pt}
Let $I \subseteq \{1,\ldots,n\}$, $A \in \sigma(I)$ be increasing, and assume that $\P[A] \in (0,1)$. Then:
\begin{enumerate}
\item $T_A(X) \in L^2$.
\item $T_A(X)$ is $1-$Lipschitz, and almost surely its gradient $\nabla T_A(X)$ satisfies:
\begin{enumerate}
\item $\frac{ \partial T_A(X)}{\partial X_i}  = 0$ for all $i \notin I$;
\item $\frac{ \partial T_A(X)}{\partial X_i}  \ge 0 $ for all $i \in I$;
 \item $\| \nabla T_A(X) \|_1 = 1 $.
 \end{enumerate}
\item If $X$ is non-degenerate, for every $u \in \R$, $\{X + u \in A \} = \{T_A(X) \le u\}$ almost surely.
\end{enumerate}
\end{lemma}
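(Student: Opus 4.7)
My plan is to derive each part directly from the definition of $T_A$, leaning on the monotonicity of $A$ and Rademacher's theorem for the gradient statements. The workhorse is an $\ell^\infty$-Lipschitz identity for the deterministic map $T_A \colon \R^n \to \R$: if $x \le y + c\id$ coordinate-wise with $c \ge 0$ and $u$ satisfies $x - u\id \in A$, then $y - (u-c)\id \ge x - u\id$, so by monotonicity $y - (u-c)\id \in A$, giving $T_A(y) \ge u - c$ and hence $T_A(y) \ge T_A(x) - c$. Swapping roles yields $|T_A(x) - T_A(y)| \le \|x - y\|_\infty$; specialising to $y = x + t\id$ gives the shift identity $T_A(x + t\id) = T_A(x) + t$ for all $t \in \R$; and specialising to $y = x + v$ with $v \ge 0$ records coordinate-wise monotonicity of $T_A$.

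The $L^2$ claim in (1) is then immediate: the Lipschitz bound gives $|T_A(X)| \le |T_A(\E[X])| + \|X - \E[X]\|_\infty$, the constant is finite because $\P[A] \in (0,1)$ ensures both that $A \neq \emptyset$ (lower-bounding the sup via a point $x_0 \in A$ with $\E[X] - u\id \ge x_0$ for $u$ sufficiently negative) and that $A \neq \R^n$ (upper-bounding the sup via a point $y_0 \notin A$ dominating $\E[X] - u\id$ for $u$ sufficiently positive), and $\|X - \E[X]\|_\infty$ has all Gaussian moments.

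For (2), Rademacher's theorem gives differentiability of $T_A$ on a full-measure subset of $\R^n$, hence almost-sure differentiability at $X$ once one reduces to the affine support of $X$ (where $X$ admits a density). Then (2)(a) is forced by $A \in \sigma(I)$, (2)(b) is the coordinate-wise monotonicity recorded above, and (2)(c) follows by differentiating the shift identity at $t = 0$ to obtain $\langle \nabla T_A(x), \id \rangle = 1$ at each differentiability point, which combined with (2)(b) yields $\|\nabla T_A\|_1 = \sum_i \partial_i T_A = 1$. For (3), the definition determines $\{X + u \in A\}$ as a deterministic function of $T_A(X)$ up to the question of whether the sup is attained; under non-degeneracy of $X$, the shift identity together with the absolute continuity of $\langle \id, X \rangle$ propagates an atomless law to $T_A(X)$, resolving this boundary ambiguity almost surely.

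I expect the main obstacle to be the careful handling of degeneracy in part (2). Rademacher supplies differentiability almost everywhere on $\R^n$, which transfers immediately to $X$ in the non-degenerate Gaussian case; if $X$ is supported on a proper affine subspace one must either work intrinsically on that subspace or perturb $X$ by an independent small Gaussian and pass to the limit using the Lipschitz continuity of $T_A$, interpreting the claimed properties of $\nabla T_A(X)$ relative to the appropriate subspace. A secondary subtlety is the atomlessness needed for (3), which I would deduce from the shift identity combined with the absolute continuity of $\langle \id, X \rangle$ in the non-degenerate case.
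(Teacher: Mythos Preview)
Your proposal is correct and follows essentially the same route as the paper: both derive the $\ell^\infty$-Lipschitz bound, coordinate-wise monotonicity, $I$-measurability, and the shift identity $T_A(x+t\id)=T_A(x)+t$ directly from the definition, and read off (1)--(2) from these (you are in fact more explicit than the paper about invoking Rademacher and about the degenerate case in (2)). The only genuine difference is in (3): the paper argues that non-degeneracy makes the laws of $X$ and $X+h\id$ mutually absolutely continuous for every $h$, pushes this forward through $T_A$ via the shift identity, and concludes that $T_A(X)$ and $T_A(X)+h$ are equivalent for all $h$, ruling out atoms; your route instead extracts a one-dimensional Gaussian component in the $\id$-direction and uses the shift identity to write $T_A(X)$ as that component plus a function of the remainder. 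Both arguments are valid, though note that to make yours precise you should isolate a component \emph{independent} of the rest (e.g.\ $Z=c\,\id^T K^{-1}X$ with $c=(\id^T K^{-1}\id)^{-1}$, so that $X-Z\id\perp Z$), rather than $\langle\id,X\rangle$ itself, or else condition and check that the conditional variance of $\langle\id,X\rangle$ given $X-\tfrac{1}{n}\langle\id,X\rangle\id$ is positive.
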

\begin{proof}
We prove these in turn:\\

 \vspace{-0.3cm}
 \noindent \textbf{(1).} Fix $v^-, v+ \in \R^n$ such that $v^- \notin A$ and $v^+ \in A$ (recall that we assume $\P[A] \in (0,1)$), and set $u^- = \min_i v^-_i$ and $u^+ = \max_i v^+_i$. Then since $A$ is increasing, $ X + u^+ - \min_i X_i \in A$, which implies that  $T_A(X) \ge   \min_i X_i  - u^+$. Similarly $T_A(X) \le   \max_i X_i  + u^-$. Since $\min_i X_i, \max_i X_i$ are square-integrable, so is $T_A(X)$. \\

 \vspace{-0.3cm}
\noindent \textbf{(2).} By the definition of $T_A(X)$, and since $A \in \sigma(I)$ is increasing:
\begin{enumerate}
\item For all $v \in \R^n$, $T_A(X + v) \le T_A(X) + \|v\|_\infty$;
\item  For all $v \in \R^n$ such that $v \ge 0$, $T_A(X+v) \ge T_A(X)$;
 \item For all $v \in \R^n$ such that $v|_I = 0$, $T_A(X+v) = T_A(X)$;
 \item For all $h \in \R$, $T_A(X + h) = T_A(X) + h$.
 \end{enumerate}
 Combining these proves the claim. \\ 
 
 \vspace{-0.3cm}
\noindent \textbf{(3).} By the definition of $T_A(X)$ it suffices to show that $\{T_A(X) = u\}$ has probability zero. For this, observe that since $X$ is non-degenerate, the laws of $X+h$ and $X$ are mutually absolutely continuous for every $h \in \R$. Since also $T_A(X + h) = T_A(X)  + h$, and absolute continuity is preserved under measurable transformation, the laws of $T_A +h$ and $T_A$ are also mutually absolutely continuous for every $h \in \R$. This rules out the existence of atoms in the law of $T_A$.
\end{proof}

The advantage of thresholds in our context is that, assuming cross-correlations are of consistent sign, one can estimate the covariance between thresholds rather precisely:
 
\begin{proposition}
\label{p:dt}
For all $I_1,I_2 \subseteq \{1,\ldots,n\}$, and increasing events $A_1 \in \sigma(I_1)$ and $A_2 \in \sigma(I_2)$ such that $\P[A_i] \in (0,1)$, 
\begin{equation}
\label{e:dt1}
 K_{I_1,I_2} \ge 0 \quad \implies \quad \min_{i \in I_1, j \in I_2} K(i,j)  \le  \textrm{Cov}[ T_{A_1}, T_{A_2} ] \le  \|K_{I_1,I_2} \|_\infty   
 \end{equation}
 and
\begin{equation}
\label{e:dt2}
  \ \ K_{I_1,I_2} \le 0 \quad \implies \quad  -   \|K_{I_1,I_2} \|_\infty  \le  \textrm{Cov}[ T_{A_1}, T_{A_2} ] \le  \max_{i \in I_1, j \in I_2} K(i,j)   . 
 \end{equation}
\end{proposition}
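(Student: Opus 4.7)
The plan is to apply the classical Gaussian interpolation identity
\[
\Cov[F(X),G(X)] \;=\; \int_0^1 \sum_{i,j} K(i,j)\, \E\!\left[\partial_i F(X^{(1)}_t)\, \partial_j G(X^{(2)}_t)\right] dt,
\]
where $(X^{(1)}_t,X^{(2)}_t)$ is a centred Gaussian pair with each marginal distributed as $X$ and cross-covariance $t K$. This identity is obtained by differentiating $t \mapsto \E[F(X^{(1)}_t) G(X^{(2)}_t)]$ between $t=0$ (where the factors decouple into $\E[F]\E[G]$) and $t=1$ (where $X^{(1)}_t = X^{(2)}_t$) and applying Gaussian integration by parts. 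Applied with $F = T_{A_1}$ and $G = T_{A_2}$, it should turn $\Cov[T_{A_1}, T_{A_2}]$ into an averaged convex combination of the entries of $K_{I_1,I_2}$, via the gradient properties in Lemma \ref{l:pt}(2).

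First I would justify the identity at the available regularity: Lemma \ref{l:pt} only gives $T_{A_k}$ as $1$-Lipschitz. The standard route is a mollification $T^\eps_{A_k} = T_{A_k} \ast \phi_\eps$ with a smooth non-negative bump. The key point is that all three gradient properties of Lemma \ref{l:pt}(2) pass through convolution with a non-negative kernel: coordinate support on $I_k$, non-negativity on $I_k$, and the \emph{exact} identity $\|\nabla T^\eps_{A_k}\|_1 = 1$ (no absolute values arise because the gradient is already non-negative, so the sum commutes with the convolution). The interpolation identity holds classically for the smooth $T^\eps_{A_k}$; one then passes $\eps \to 0$ by dominated convergence using the uniform bound $\|\nabla T^\eps_{A_k}\|_\infty \le 1$ together with $\nabla T^\eps_{A_k} \to \nabla T_{A_k}$ a.e.\ (Rademacher and Lebesgue differentiation).

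The remainder is a one-line convex combination argument. By the preserved gradient properties, for almost every realisation the array $\big(\partial_i T_{A_1}(X^{(1)}_t)\, \partial_j T_{A_2}(X^{(2)}_t)\big)_{i,j}$ is non-negative, is supported in $I_1 \times I_2$, and sums to $\|\nabla T_{A_1}\|_1 \|\nabla T_{A_2}\|_1 = 1$ — that is, it is a probability distribution on $I_1 \times I_2$. Hence for each $t$
\[
\min_{i \in I_1,\, j \in I_2} K(i,j)\; \le\; \sum_{i,j} K(i,j)\, \partial_i T_{A_1}(X^{(1)}_t)\, \partial_j T_{A_2}(X^{(2)}_t)\; \le\; \max_{i \in I_1,\, j \in I_2} K(i,j),
\]
and integrating against the probability measure $dt$ on $[0,1]$ preserves both bounds to give $\min K(i,j) \le \Cov[T_{A_1}, T_{A_2}] \le \max K(i,j)$ over $I_1 \times I_2$. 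Specialising to $K_{I_1,I_2} \ge 0$ (where $\max = \|K_{I_1,I_2}\|_\infty$) and $K_{I_1,I_2} \le 0$ (where $\min = -\|K_{I_1,I_2}\|_\infty$) yields \eqref{e:dt1} and \eqref{e:dt2} respectively.

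The main obstacle is the regularity justification: confirming that the three gradient properties of Lemma \ref{l:pt}(2) survive both mollification and the $\eps \to 0$ limit, so that the integrand in the Gaussian interpolation identity can genuinely be interpreted as a random convex combination of the entries of $K_{I_1,I_2}$. Everything else is immediate from that convex combination structure.
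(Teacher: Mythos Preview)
Your proof is correct and follows essentially the same route as the paper: the paper applies the equivalent Ornstein--Uhlenbeck form \eqref{e:cf} of the interpolation identity (cited from \cite{cha08} at the Lipschitz level of generality, so your mollification step is not needed) and then exploits exactly the same structure coming from Lemma~\ref{l:pt}(2). Your phrasing in terms of a random probability measure on $I_1\times I_2$ is marginally cleaner, since it yields $\min_{I_1\times I_2}K \le \Cov[T_{A_1},T_{A_2}] \le \max_{I_1\times I_2}K$ unconditionally, with the sign hypotheses used only to rewrite the endpoints as $\pm\|K_{I_1,I_2}\|_\infty$.
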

\begin{remark}
We shall only make use of the upper bound of \eqref{e:dt1} and the lower bound of \eqref{e:dt2}, but we believe the result to be of independent interest.
\end{remark}

Before proving Proposition \ref{p:dt} we recall a classical Gaussian covariance formula (see \cite[Lemma 3.4]{cha08} for the case $f=g$, and the proof in the general case is identical). Let $X'$ denote an independent copy of $X$, and for $t \in [0,\infty)$ define $X^t = e^{-t} X + \sqrt{1 - e^{-2t} } X' $. 
 Then for all absolutely continuous $f(X),g(X) \in L^2$ such that $\|\nabla f(X)\|_2, \|\nabla g(X)\|_2   \in L^2 $,
 \begin{equation}
\label{e:cf}
  \textrm{Cov}[ f(X), g(X) ]  = \int_0^\infty e^{-t} \sum_{1 \le i,j \le n} K(i,j)  \,  \E \Big[  \frac{\partial f(X)}{ \partial X_i}   \frac{\partial g(X^t)}{ \partial X_j}  \Big]   \, dt .  
  \end{equation}

\begin{proof}[Proof of Proposition \ref{p:dt}]
We focus on the upper bound of \eqref{e:dt1}, since the proof of the lower bound and of \eqref{e:dt2} are analogous. By the first and second items of Lemma \ref{l:pt} we may apply \eqref{e:cf} to $f = T_{A_1}$ and $g = T_{A_2}$. This yields
\begin{align*}
\textrm{Cov}[ T_{A_1}, T_{A_2} ]  & =    \int_0^\infty e^{-t}  \sum_{i \in I_1, j \in I_2}K(i,j) \,  \E \Big[  \frac{\partial T_{A_1}(X)}{ \partial X_i}   \frac{\partial T_{A_2}(X^t)}{ \partial X_j}  \Big]   \, dt \\
& \le  \|K_{I_1,I_2} \|_\infty \int_0^\infty e^{-t} \sum_{i \in I_1, j \in I_2} \E \Big[  \frac{\partial T_{A_1}(X)}{ \partial X_i}  \frac{\partial T_{A_2}(X^t)}{ \partial X_j}  \Big]   \, dt \\
& = \|K_{I_1,I_2} \|_\infty \int_0^\infty e^{-t}   \E \Big[ \|   \nabla T_{A_1}(X) \|_1 \| \nabla T_{A_2}(X^t) \|_1 ]    \, dt  \\
& =   \|K_{I_1,I_2} \|_\infty 
\end{align*}
where the inequality used the fact that all terms in the integrand are positive by the assumption $K|_{I_1,I_2} \ge 0$ and the second item of Lemma~\ref{l:pt}, and the final step used the second item of Lemma \ref{l:pt} again.
\end{proof}

\subsubsection{The positively-correlated case}

We first present the proof of Theorem \ref{t:sdi} in the `positively correlated' case that $K_{I_1, I_2} \ge 0$, which is straightforward. In the next subsection we show how to adapt this to the general case.

\smallskip
We will make use of Hoeffding's covariance formula (see \cite[Lemma 2]{leh66})
\begin{equation}
\label{e:hoeff}
 \textrm{Cov}[Y,Z] =  \int_{-\infty}^{\infty}    \int_{-\infty}^{\infty}  \P[ Y \le y, Z \le z] - \P[Y \le y] \P[Z \le z] \, dy dz   
 \end{equation}
  valid for arbitrary $Y, Z \in L^2$. 
  
  \smallskip
  We also make use of the following `local' positive association property: for all $I_1,I_2 \subseteq \{1,\ldots,n\}$ such that $K_{I_1,I_2} \ge 0$, and increasing events $A_1 \in \sigma(I_1)$ and $A_2 \in \sigma(I_2)$, 
 \begin{equation}
 \label{e:pa}
  \P[ A_1 \cap A_2] \ge \P[A_1] \P[A_2] .
 \end{equation}
Eq.\ \eqref{e:pa} is a `local' extension of the standard Gaussian positive association property due to Pitt \cite{pit82}; see \cite[Lemma A.4]{drrv21} for a proof, or one can derive it from \eqref{e:cf} via approximation. Similarly we also have that, for all $I_1,I_2 \subseteq \{1,\ldots,n\}$ such that $K_{I_1,I_2} \le 0$, and increasing events $A_1 \in \sigma(I_1)$ and $A_2 \in \sigma(I_2)$, 
 \begin{equation}
 \label{e:pa2}
  \P[ A_1 \cap A_2] \le \P[A_1] \P[A_2] .
 \end{equation}
One derives \eqref{e:pa2} from \eqref{e:pa} by negating $X|_{I_1}$ and taking the complement of $A_1$.

\begin{proof}[Proof of Theorem \ref{t:sdi} assuming $K_{I_1, I_2} \ge 0$]
Without loss of generality we may assume that $X$ is non-degenerate (otherwise one can consider an approximating sequence $X_n \to X$ in law, since then $\P[X_n \in A] \to \P[X \in A]$ for every increasing event $A$, see \cite[Section 3]{pit82}). We may also assume that $\P[A_i] \in (0,1)$ (otherwise the result is immediate). Note that since  $K_{I_1, I_2} \ge 0$, equation \eqref{e:tsdi2} follows from \eqref{e:pa}, so we focus on~\eqref{e:tsdi1}.

Applying \eqref{e:hoeff} to $Y = T_{A_1}$ and $Z = T_{A_2}$ gives 
\begin{align*}
 \textrm{Cov}[T_{A_1},T_{A_2}]  &=  \int_{-\infty}^{\infty}    \int_{-\infty}^{\infty}  \P[ T_{A_1} \le u, T_{A_2} \le v] - \P[T_{A_1} \le u] \P[T_{A_2} \le v] \, du dv  \\
 & =   \int_{-\infty}^{\infty}    \int_{-\infty}^{\infty}  \P[ X + u \in A_1 , X + v \in A_2 ] - \P[X + u \in A_1] \P[X + v \in A_2] \, du dv \\
 & \ge  \int_0^\eps     \int_0^\eps \P[ X + u \in A_1 , X + v \in A_2 ] - \P[X + u \in A_1] \P[X + v \in A_2] \, du dv
 \end{align*}
where we used the third item of Lemma \ref{l:pt} in the second step, and \eqref{e:pa} in the third step.

Since by Proposition \ref{p:dt} we have $ \textrm{Cov}[T_{A_1},T_{A_2}]  \le \|K_{I_1,I_2}\|_\infty$, we deduce that there exist  $u, v \in [0, \eps]$ such that 
\begin{equation}
\label{e:psdi1}
 \P[ X + u \in A_1 , X + v \in A_2 ] - \P[X + u \in A_1] \P[X + v \in A_2]  \le \| K_{I_1, I_2} \|_\infty / \eps^2 . 
 \end{equation}
Since $A_1$ and $A_2$ are increasing, the left-hand side of \eqref{e:psdi1} is at least 
\[  \P[ X  \in A_1 , X  \in A_2 ] - \P[X + \eps \in A_1] \P[X + \eps \in A_2]  \]
which concludes the proof.
\end{proof}

\subsubsection{The general case}
Note that the `positively-correlated' case only required $K_{I_1,I_2} \ge 0$, and not $K \ge 0$. As such, our strategy to extend to the general case is to approximate $X$ with $X' = X + \sqrt{\kappa} Z$, where $\kappa = \|K_{I_1,I_2}\|_\infty$ and $Z$ denotes an independent standard Gaussian. Since the covariance $K'$ of $X'$ satisfies $K'_{I_1,I_2} \ge 0$, roughly speaking this reduces the proof of \eqref{e:tsdi1} to the previous case. To prove \eqref{e:tsdi2} we use a similar strategy except we replace the constant $Z$ with a vector $\widetilde{Z} = (\widetilde{Z}_i)_i$ satisfying $\widetilde{Z}_i = Z$ if $i \in I_1$ and $\widetilde{Z}_i = -Z$ if $i \in I_2$. Then $X' = X +  \sqrt{\kappa}  \widetilde{Z}$ has covariance $K'$ satisfying $K'_{I_1,I_2} \le 0$.

\begin{proof}[Proof of Theorem \ref{t:sdi}, general case]
As in the previous case we assume that $X$ is non-degenerate and $\P[A_i] \in (0,1)$. As mentioned in Remark \ref{r:inhom} we may also assume that $I_1$ and $I_2$ are disjoint.

We begin with the proof of \eqref{e:tsdi1}. As in the previous case we write $T_{A_i}$ to denote $T_{A_i}(X)$. Suppose there exists $x \in [0,1] $ such that, for all $u,v \in [0, \eps]$, it holds that
\begin{equation}
\label{e:tsdigenp1}
\P[ T_{A_1} \le u, T_{A_2} \le v] - \P[ T_{A_1} \le u] \P[T_{A_2} \le v ] > x   .
\end{equation}
Then for all $u,v \in [\eps/3, 2\eps/3]$,
\begin{align}
\nonumber & \P[ T_{A_1} + \sqrt{\kappa} Z  \le u,  T_{A_1} + \sqrt{\kappa} Z \le v ]  \\
\nonumber & \qquad \ge    \E \Big[ \P \big[ T_{A_1} + \sqrt{\kappa} Z  \le u,  T_{A_1} + \sqrt{\kappa} Z \le v \, \big| \, Z  \big]  \id_{ |Z| \le \eps / (3 \sqrt{\kappa}) } \Big]   \\
\nonumber & \qquad > \E \Big[ \big( x + \P \big[ T_{A_1} + \sqrt{\kappa} Z  \le u \, \big| \, Z \big] \P \big[ T_{A_1} + \sqrt{\kappa} Z \le v \, \big| \, Z  \big] \big)  \id_{ |Z| \le \eps / (3 \sqrt{\kappa}) }   \Big] \\
\label{e:tsdigenp2} & \qquad   \ge x +  \E \Big[  \P \big[ T_{A_1} + \sqrt{\kappa} Z  \le u \, \big| \, Z \big] \P \big[ T_{A_1} + \sqrt{\kappa} Z \le v \, \big| \, Z \big]  \Big] - 2 \P[ |Z| \ge \eps / (3 \sqrt{\kappa}) ] . 
\end{align}
where the second inequality used \eqref{e:tsdigenp1} and the final inequality that $x \in [0,1]$. Now define 
\[ f_1(Z) =  \P \big[ T_{A_1} + \sqrt{\kappa} Z  \le u \, \big| \, Z \big]  \quad \text{and} \quad f_2(Z) =  \P \big[ T_{A_2} + \sqrt{\kappa} Z  \le v \, \big| \, Z  \big ] , \]  
and note that $f_1$ and $f_2$ are decreasing functions of $Z$. Then by positive associations \cite{pit82} (or just the Harris inequality)
\[ \E[ f_1(Z) f_2(Z) ] \ge \E[ f_1(Z) ] \E[f_2(Z)] .\]
Inserting this in \eqref{e:tsdigenp2}, we conclude that, for all $u,v \in [\eps/3, 2\eps/3]$,
\begin{align}
\label{e:psdi2}
\nonumber & \P[ T_{A_1} + \sqrt{\kappa} Z  \le u,  T_{A_1} + \sqrt{\kappa} Z \le v ]    - \P[ T_{A_1} + \sqrt{\kappa} Z  \le u] \P[T_{A_1} + \sqrt{\kappa} Z \le v ]  \\
&  \qquad  > x  - 2 \P[ |Z| \ge \eps / (3 \sqrt{\kappa}) ] .
\end{align}

Now define the vector $X' = X + \sqrt{\kappa} Z$, which has covariance $K' = K + \|K_{I_1,I_2}\|_\infty$, and note that it satisfies $K'|_{I_1, I_2} \ge 0$. Letting $T'_{A_i} = T_{A_i}(X')$, note also that  $T'_{A_i} = T_{A_i} + \sqrt{\kappa} Z$. Hence combining \eqref{e:hoeff}, \eqref{e:pa}, and \eqref{e:psdi2}, we have
\[ \textrm{Cov}[ T'_{A_1} , T'_{A_2} ] \ge  (\eps/3)^2 \big( x - 2 \P[ |Z| \ge \eps / (3 \sqrt{\kappa}) ]  \big) \ge   (\eps/3)^2 \big( x - 18\kappa/\eps^2  \big)  , \]
 where the final step was by Chebyshev's inequality. On the other hand, by Proposition \ref{p:dt},
\[ \textrm{Cov}[ T'_{A_1} , T'_{A_2} ]  \le  2 \kappa . \]
Combining we see that
\[ x < \frac{18\kappa}{\eps^2} +  \frac{18\kappa}{\eps^2} = \frac{36\kappa}{\eps^2} .  \]
We conclude that there exists $u,v \in [0, \eps]$ such that 
\[ \P[ X + u \in A_1 , X + v \in A_2 ] - \P[X + u \in A_1] \P[X + v \in A_2]  \le  \frac{36\kappa}{\eps^2}  , \]
 which, as in the positively-correlated case, yields \eqref{e:tsdi1} (with constant $c = 36$). 
 
We turn to the proof of \eqref{e:tsdi2}, which is similar. Suppose there exists $x \in [0,1] $ such that, for all $u,v \in [-\eps, 0]$, it holds that
\[  \P[ T_{A_1} \le u, T_{A_2} \le v] -  \P[ T_{A_1} \le u] \P[T_{A_2} \le v ]   < -x   . \]
 Then, similarly to in the proof of \eqref{e:tsdi1}, for all $u,v \in [-2\eps/3, -\eps/3]$,
\begin{align}
\nonumber & \P[ T_{A_1} + \sqrt{\kappa} Z  \le u,  T_{A_1} - \sqrt{\kappa} Z \le v ]  \\
\label{e:tsdigenp3} & \qquad   < -x +  \E \Big[  \P \big[ T_{A_1} + \sqrt{\kappa} Z  \le u \, \big| \, Z \big] \P \big[ T_{A_2} - \sqrt{\kappa} Z \le v \, \big| \, Z \big]  \Big]  + 2 \P[ |Z| \ge \eps / (3 \sqrt{\kappa}) ] . 
\end{align}
Redefine
\[ f_1(Z) =  \P \big[ T_{A_1} + \sqrt{\kappa} Z  \le u \, \big| \, Z \big]  \quad \text{and} \quad f_2(Z) =  \P \big[ T_{A_2} - \sqrt{\kappa} Z  \le v \, \big| \, Z  \big ] , \]  
and note that now $f_2$ is a decreasing function of $Z$, so that by positive associations \cite{pit82} $\E[ f_1(Z) f_2(Z) ] \le \E[ f_1(Z) ] \E[f_2(Z)]$. Inserting this in \eqref{e:tsdigenp3}, we conclude that, for all $u,v \in [-2\eps/3, -\eps/3]$,
\begin{align}
\label{e:psdi3}
\nonumber & \P[ T_{A_1} + \sqrt{\kappa} Z  \le u,  T_{A_2} - \sqrt{\kappa} Z \le v ]    - \P[ T_{A_1} + \sqrt{\kappa} Z  \le u] \P[T_{A_2} - \sqrt{\kappa} Z \le v ]  \\
&  \qquad  < -x  + 2 \P[ |Z| \ge \eps / (3 \sqrt{\kappa}) ] .
\end{align}

Now recall that $I_1$ and $I_2$ are assumed disjoint, and introduce a vector $\widetilde{Z}$ satisfying $\widetilde{Z}_i = Z$ if $i = I_1$ and $\widetilde{Z}_i = -Z$ if $i \in I_2$ (with the remaining coordinates arbitrary). Define the vector $X' = X + \sqrt{\kappa}\widetilde{Z} $, which has covariance 
\[  K'(i,j) =  \begin{cases} 
K(i,j)  + \kappa &  \text{ if } i, j \in I_1 \text{ or } i,j \in I_2 , \\
K(i,j)  - \kappa \le 0 &  \text{ if } i \in I_1, j \in I_2 \text{ or } i \in I_2 , j \in I_1 ,
\end{cases} \]
and so in particular $K'_{I_1,I_2} \le 0$. Letting $T'_{A_i} = T_{A_i}(X')$, note also that 
 \[   T'_{A_1} = T_{A_1} + \sqrt{\kappa} Z  \quad \text{and} \quad  T'_{A_2} = T_{A_2} - \sqrt{\kappa} Z  . \]
 Hence combining \eqref{e:hoeff}, \eqref{e:pa2}, and \eqref{e:psdi3}, we have
\[ \textrm{Cov}[ T'_{A_1} , T'_{A_2} ] <  (\eps/3)^2 \big( -x + 2 \P[ |Z| \ge \eps / (3 \sqrt{\kappa}) ]  \big) \le   (\eps/3)^2 \big(- x + 18
\kappa/\eps^2  \big)  . \]
 On the other hand, by Proposition \ref{p:dt} we have
\[ \textrm{Cov}[ T'_{A_1} , T'_{A_2} ]  \ge  - 2 \kappa ,\]
and the conclusion follows as in the proof of \eqref{e:tsdi2} (again with constant $c = 36$).
\end{proof}

  \subsection{Proof of Theorems \ref{t:sdi2} and \ref{t:sdi3}}

The proof of Theorems \ref{t:sdi2} and \ref{t:sdi3} rely on isoperimetric properties of the standard Gaussian space. The basic idea is that `stability' in Gaussian space is dimension-free, being optimised by half-spaces, which essentially reduces the proof of Theorems \ref{t:sdi2} and \ref{t:sdi3} to the two-dimensional case.

\smallskip
Let us state the two properties we need precisely.  Let $Y = (Y_i)_{1 \le i \le n}$ be an i.i.d.\ vector of standard Gaussian random variables, let $Y'$ be an independent copy of $Y$, and let $Z,Z'$ be i.i.d.\ standard Gaussians. Recall that $\Phi(u) = \P[Z \le u]$, and for $\rho \in [-1,1]$ let
\[ \Phi_\rho(u,v) = \P \big[  Z \le  u , \rho Z + \sqrt{1-\rho^2} Z' \le v \big]  \]
be the cdf of the $\rho$-correlated bivariate standard Gaussian vector $(Z, \rho Z + \sqrt{1-\rho^2} Z')$. For $A \subseteq \R^n$ and $\eps \ge 0$, define $A^\eps = \{ x \in \R^n : \text{ there exists $y \in A$ s.t. } \|x-y\|_2 \le \eps \}$. 

\begin{theorem}[Gaussian isoperimetric inequality {\cite[Eq. (3)]{led98}}]
\label{t:gii}
For every Borel set $A \subseteq \R^n$ and $\eps \ge 0$,
\[ \P[ Y \in  A^\eps ] \ge  \Phi \big(  \Phi^{-1}  ( \P[Y \in A]  ) + \eps \big) . \]
\end{theorem}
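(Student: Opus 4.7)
The plan is to establish the theorem via the classical Poincar\'e--Borell reduction to L\'evy's spherical isoperimetric inequality. The starting point is L\'evy's inequality on the sphere $S^{N-1}(\sqrt{N}) \subset \R^N$ of radius $\sqrt{N}$, equipped with the normalised uniform measure $\sigma_N$: among all Borel sets of fixed measure, the geodesic $\eps$-enlargement is minimised by spherical caps. This can be established by Schmidt's two-point symmetrisation argument on the sphere, a purely geometric ingredient that I would take as a black box.

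Next I would invoke Poincar\'e's observation that if $W_N$ is uniformly distributed on $S^{N-1}(\sqrt{N})$, then the joint law of its first $n$ coordinates converges weakly to the standard Gaussian on $\R^n$ as $N \to \infty$. Given a Borel $A \subseteq \R^n$ with $\P[Y \in A] = p$, lift it to $\widetilde A_N = \{x \in S^{N-1}(\sqrt{N}) : (x_1,\ldots,x_n) \in A\}$; then $\sigma_N(\widetilde A_N) \to p$, and, using that the geodesic and Euclidean metrics on the sphere agree up to an error of order $O(\eps^3/N)$ at scale $\eps$, one checks $\sigma_N(\widetilde A_N^{\eps-\delta}) \to \P[Y \in A^\eps]$ for any $\delta > 0$. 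Comparing to the half-space $H = \{x \in \R^n : x_1 \le \Phi^{-1}(p)\}$, whose lift is asymptotically a spherical cap of $\sigma_N$-measure tending to $p$, L\'evy's inequality yields $\sigma_N(\widetilde A_N^{\eps-\delta}) \ge \sigma_N(\widetilde H_N^{\eps-\delta})$. Passing first to $N \to \infty$ and then $\delta \to 0$, using the continuity of $p \mapsto \Phi(\Phi^{-1}(p) + \eps)$ to absorb the small measure discrepancy between the lifts, gives the claim.

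The main obstacle is making these limits rigorous in lockstep: one must verify both that the spherical $\eps$-enlargement of $\widetilde A_N$ converges in measure to the Euclidean $\eps$-enlargement on the Gaussian side (which requires controlling the concentration of $\|W_N\|_2$ around $\sqrt{N}$ together with the metric comparison), and that the spherical cap $\widetilde H_N$ has measure converging to the prescribed $p$, so that the two enlargements may be compared at matching base measure. An alternative route that entirely bypasses the spherical geometry would be Bobkov's functional approach: prove the inequality $\mathcal I(\E f) \le \E\sqrt{\mathcal I(f)^2 + |\nabla f|^2}$, with $\mathcal I = \varphi \circ \Phi^{-1}$ and $\varphi$ the standard Gaussian density, on the two-point space, tensor it to $\{-1,1\}^N$, and take a central limit theorem limit to transfer it to Gaussian space; applied to a smooth approximation of $\ind_A$ it recovers Theorem \ref{t:gii}.
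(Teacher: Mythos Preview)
The paper does not prove Theorem~\ref{t:gii} at all: it is quoted as a classical result with a citation to Ledoux \cite{led98}, and used as a black box in the proof of Corollary~\ref{c:gii}. So there is no ``paper's own proof'' to compare against.

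Your sketch is a correct outline of the two standard proofs of the Gaussian isoperimetric inequality (due to Borell and Sudakov--Tsirelson via the spherical route, and to Bobkov via the two-point functional inequality). The Poincar\'e limit argument you describe does work, and you have correctly identified the technical points that need care: matching the geodesic and chordal metrics on $S^{N-1}(\sqrt{N})$ up to $O(\eps^3/N)$, and ensuring the base measures of the lifts agree in the limit so that the spherical isoperimetric comparison is valid. These are genuine but routine analysis exercises. The Bobkov alternative you mention is arguably cleaner and avoids the spherical geometry entirely. Either route would constitute a complete proof; for the purposes of this paper, however, a citation suffices.
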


\begin{theorem}[Gaussian noise stability {\cite{bor85}, \cite[Corollary 4.3]{moo10} }]
\label{t:gns}
For all functions $f_1,f_2 \in L^2$ such that $f_1,f_2 \in [0,1]$, and all $\rho \in [0,1]$,
\[   \E[f_1(Y) f_2(\rho Y + \sqrt{1-\rho^2} Y') ] \le \Phi_\rho\big(    \Phi^{-1}  ( \E[f_1(Y)]  ) , \Phi^{-1}  ( \E[f_2(Y) ] )  \big)   . \]
In particular, for all Borel sets $A_1,A_2 \subset \R^n$ and $\rho \in [0,1]$,
\[ \P[ Y \in  A_1, \rho Y + \sqrt{1-\rho^2} Y' \in A_2 ] \le \Phi_\rho\big(    \Phi^{-1}  ( \P[Y \in A_1]  ) , \Phi^{-1}  ( \P[Y \in A_2]  )  \big) . \]
\end{theorem}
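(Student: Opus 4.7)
The set inequality is the special case of the function inequality applied to $f_i = \one_{A_i}$, so I would focus on the function form. The key observation is that the right-hand side is sharp: when $f_i = \one_{H_i}$ for parallel half-spaces $H_i = \{x \in \R^n : \langle x, e_1 \rangle \le a_i\}$ with $a_i = \Phi^{-1}(\E f_i)$, a direct computation gives $\E[\one_{H_1}(Y) \one_{H_2}(\rho Y + \sqrt{1-\rho^2} Y')] = \Phi_\rho(a_1,a_2)$. So the claim asserts that parallel half-spaces maximise the noise stability $\E[f_1(Y) T_\rho f_2(Y)]$ among $[0,1]$-valued functions of prescribed mean, where $T_\rho$ denotes the Ornstein--Uhlenbeck semigroup $T_\rho g(x) = \E[g(\rho x + \sqrt{1-\rho^2} Y')]$.

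The classical approach (Borell) is a Gaussian symmetrisation/rearrangement reduction to one dimension, in two stages. \textbf{Stage 1 (symmetrisation).} For each coordinate direction $e_k$, perform an Ehrhard-type Gaussian rearrangement: for each fixed value of the other coordinates, replace $f_i$ by its monotone decreasing rearrangement in the $y_k$ variable against the Gaussian density, so that the one-dimensional conditional expectation is preserved. This does not change $\E[f_i]$ and, crucially, does not decrease $\E[f_1(Y) T_\rho f_2(Y)]$, because the one-dimensional OU kernel is totally positive and hence respects monotone rearrangements. Iterating across coordinates reduces matters to functions depending only on $y_1$. \textbf{Stage 2 (one dimension).} Once reduced to $\R$, a standard variational argument using log-concavity of the Gaussian density identifies the threshold indicators $\one_{(-\infty, a_i]}$ as the extremisers among $[0,1]$-valued functions of fixed mean, yielding the bound with $\Phi_\rho(a_1, a_2)$ on the right.

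The main obstacle is the monotonicity of noise stability under Gaussian rearrangement in Stage 1; this is the heart of Borell's argument and requires a delicate bivariate computation. A cleaner alternative I would attempt is the interpolation/heat-flow method (in the spirit of Mossel--Neeman): define
\[ \Psi(\rho) = \E[f_1(Y) T_\rho f_2(Y)] - \Phi_\rho\bigl(\Phi^{-1}(\E f_1), \Phi^{-1}(\E f_2)\bigr), \qquad \rho \in [0,1], \]
check $\Psi(0) = 0$, and show $\Psi(\rho) \le 0$ by differentiating in $\rho$ and invoking a pointwise PDE inequality (of Ehrhard type) satisfied by the function $(x,y) \mapsto \Phi_\rho(\Phi^{-1}(x), \Phi^{-1}(y))$. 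This bypasses the rearrangement step but introduces its own difficulty: one must verify appropriate regularity and handle the boundary behaviour as $\rho \to 1$ and as $\E f_i$ approaches $0$ or $1$. I would expect the heat-flow route to be the most efficient in practice, but either approach ultimately relies on the sharp two-dimensional Gaussian computation that identifies $\Phi_\rho$ as the extremal value.
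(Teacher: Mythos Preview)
The paper does not prove Theorem~\ref{t:gns}; it is quoted as a known result with citations to Borell~\cite{bor85} and Mossel--Neeman (via \cite[Corollary~4.3]{moo10}), and is then used as a black box in the proof of Corollary~\ref{c:gns}. So there is no ``paper's own proof'' to compare against.

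That said, your sketch is a fair summary of the two standard routes behind those citations. The symmetrisation outline matches Borell's original argument in spirit, and you correctly flag that the hard step is the monotonicity of the bilinear noise stability under one-dimensional Gaussian rearrangement. One caveat on your heat-flow alternative: the Mossel--Neeman argument does not differentiate in the correlation parameter~$\rho$ as you wrote. Rather, it fixes~$\rho$, evolves the pair $(f_1,f_2)$ under the Ornstein--Uhlenbeck semigroup $P_t$, and shows that the functional
\[
t \ \longmapsto \ \E\big[\, J_\rho\big(P_t f_1(Y),\, P_t f_2(\rho Y + \sqrt{1-\rho^2}\,Y')\big)\,\big], \qquad J_\rho(x,y) := \Phi_\rho\big(\Phi^{-1}(x),\Phi^{-1}(y)\big),
\]
is non-decreasing in~$t$; sending $t\to\infty$ recovers the half-space value. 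The monotonicity comes from the fact that $J_\rho$ satisfies a pointwise Hessian inequality tailored to the generator of the coupled OU process. Your interpolation $\Psi(\rho)$ with $\Psi(0)=0$ does not obviously yield $\Psi'\le 0$ in general, so if you pursue the heat-flow route you should reformulate it along the Mossel--Neeman lines.
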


\noindent These theorems adapt to correlated Gaussian vectors as follows:

\begin{corollary}
\label{c:gii}
For all increasing events $A$, and $\eps > 0$,
\[   \P[ X  + \eps \in  A ] \ge   \Phi \big(  \Phi^{-1}  ( \P[X \in A]  ) + \eps / \sqrt{ \|K\|_\infty }\big) .  \]
\end{corollary}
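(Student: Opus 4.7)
The plan is to reduce Corollary~\ref{c:gii} to the standard Gaussian isoperimetric inequality (Theorem~\ref{t:gii}) via the linear change of variables that turns a general Gaussian vector into an i.i.d.\ one. Write $X = \Sigma^{1/2} Y$, where $\Sigma = K$ and $Y$ is an i.i.d.\ standard Gaussian vector (in the degenerate case, perturb $\Sigma$ slightly and pass to the limit at the end, as in the proof of Theorem~\ref{t:sdi}). Define
\[ A' = \{ y \in \R^n : \Sigma^{1/2} y \in A \}, \qquad A'' = \{y \in \R^n : \Sigma^{1/2} y + \eps \id \in A\}, \]
so that $\P[X \in A] = \P[Y \in A']$ and $\P[X+\eps \in A] = \P[Y \in A'']$. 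In these terms the desired inequality is $\P[Y \in A''] \ge \Phi( \Phi^{-1}(\P[Y \in A']) + \eps/\sqrt{\|K\|_\infty})$, which by Theorem~\ref{t:gii} would follow from the inclusion
\[ (A')^{\eps/\sqrt{\|K\|_\infty}} \subseteq A''. \]

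The key step is to prove this inclusion, which is essentially an operator-norm estimate. Take $y \in A'$ and $z \in \R^n$ with $\|z\|_2 \le \eta := \eps/\sqrt{\|K\|_\infty}$; then
\[ \Sigma^{1/2}(y+z) + \eps \id = \Sigma^{1/2} y + \big(\Sigma^{1/2} z + \eps \id\big), \]
and since $\Sigma^{1/2} y \in A$ and $A$ is increasing, it suffices to show the coordinate-wise bound $\Sigma^{1/2} z + \eps \id \ge 0$. Since $\Sigma^{1/2}$ is symmetric and its $i$-th row has squared $\ell^2$-norm $(\Sigma^{1/2} \Sigma^{1/2})_{ii} = K(i,i) \le \|K\|_\infty$, the Cauchy--Schwarz inequality gives $|(\Sigma^{1/2} z)_i| \le \sqrt{\|K\|_\infty}\,\|z\|_2 \le \eps$ for every $i$, which is exactly what is needed.

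Combining the inclusion with Theorem~\ref{t:gii} applied to $A'$,
\[ \P[Y \in A''] \ge \P\big[Y \in (A')^{\eps/\sqrt{\|K\|_\infty}}\big] \ge \Phi\big( \Phi^{-1}(\P[Y \in A']) + \eps/\sqrt{\|K\|_\infty} \big), \]
and translating back to $X$ yields the corollary. The only subtle point is the degenerate case, but this is harmless: one can approximate $X$ by $X + \delta Z$ for an independent standard Gaussian vector $Z$ and $\delta \downarrow 0$, noting that $\P[X + \delta Z \in A] \to \P[X \in A]$ for increasing $A$ (as in the proof of Theorem~\ref{t:sdi}) and that $\|K + \delta^2 I\|_\infty \to \|K\|_\infty$. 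The main obstacle is genuinely just the $\ell^2 \to \ell^\infty$ norm bound for $\Sigma^{1/2}$; everything else is bookkeeping.
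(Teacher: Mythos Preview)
Your proof is correct and follows essentially the same route as the paper: represent $X$ as a linear image of an i.i.d.\ standard Gaussian, reduce to the Gaussian isoperimetric inequality via an inclusion of neighbourhoods, and verify that inclusion by the Cauchy--Schwarz bound on the rows of the square root of $K$. Your choice of the symmetric square root makes the row-norm computation $\|(\Sigma^{1/2})_{i\cdot}\|_2^2 = K(i,i)$ particularly clean, and your handling of the degenerate case by approximation is fine.
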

\begin{proof}
Decompose $K = Q^T Q$ for some matrix $Q$, so that $X$ may be represented as $X = Q Y$. Let $t \ge 0$, and let $S \subseteq \R^n$ denote a Borel set such that $\{Y \in S\} = \{X \in A \}$. We claim that
\begin{equation}
\label{e:inc}
 \{Y \in  S^t \} \subseteq \{ X + t \|K\|_\infty \in A \} .
 \end{equation}
Indeed suppose $Y \in S^t$. Then by definition $Y = Y' + v$ for some $Y' \in S$ and $\|v\|_2 \le t$, and so
\[ X +   t \|K\|_\infty = QY + t \|K\|_\infty  = Q Y' + Q v + t \|K\|_\infty   \ge Q Y' \]
 where the inequality is since, by Cauchy-Schwarz, $\|Qv\|_\infty \le t\|Q^T Q\|_\infty  = t\|K \|_\infty $. Since $QY' \in A$ and $A$ is increasing, we conclude that $X +   t \|K\|_\infty \in A$.
 
 Combining Theorem \ref{t:gii} and \eqref{e:inc},
\[  \Phi \big(  \Phi^{-1}  ( \P[X \in A]  ) + t \big)  = \Phi \big(  \Phi^{-1}  ( \P[Y \in S]  ) + t \big)  \le  \P[ Y \in  S^t ]      \le  \P[X + t \|K\|_\infty \in A ]  \]
and the result follows by setting $t = \eps / \|K\|_\infty$.
\end{proof}

\begin{corollary}
\label{c:gns}
For all $I_1,I_2 \subseteq \{1,\ldots,n\}$, and events $A_1 \in \sigma(I_1)$ and $A_2 \in \sigma(I_2)$,
\[   \P[ X \in  A_1 \cap A_2] \le \Phi_{\rho(I_1,I_2) }  \big(    \Phi^{-1}  ( \P[X \in A_1 ]  ) , \Phi^{-1}  ( \P[X \in A_2]  )  \big) .  \]
\end{corollary}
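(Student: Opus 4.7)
The plan is to deduce the bound from Theorem \ref{t:gns} via a canonical correlation decomposition of $X|_{I_1}$ and $X|_{I_2}$, followed by an Ornstein--Uhlenbeck-type convolution trick that exploits the fact that $\rho(I_1,I_2)$ coincides with the top canonical correlation.

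Set $\rho = \rho(I_1,I_2)$ and assume $\rho \in (0,1)$ (the endpoints give independence and a trivial bound respectively). Without loss of generality $I_1 \cap I_2 = \emptyset$, $|I_1|=|I_2|=k$ (pad with independent standard Gaussians, which do not affect $A_1,A_2$), and the blocks $K_{11}=\Cov(X|_{I_1})$, $K_{22}=\Cov(X|_{I_2})$ are invertible (otherwise pass to the relevant supporting subspaces, or perturb $X$ by an independent $\delta$-Gaussian and let $\delta \to 0$). Writing $K_{12}$ for the cross-covariance, let $M = K_{11}^{-1/2} K_{12} K_{22}^{-1/2} = U\Sigma V^T$ be an SVD and set
\[ W_1 = U^T K_{11}^{-1/2}\, X|_{I_1}, \qquad W_2 = V^T K_{22}^{-1/2}\, X|_{I_2}. \]
Then each of $W_1, W_2$ is a standard Gaussian vector in $\R^k$ with $\E[W_1 W_2^T] = \Sigma = \mathrm{diag}(\sigma_1,\ldots,\sigma_k)$. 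The linearisation identity \eqref{e:rholin} yields $\rho = \|M\|_{\mathrm{op}} = \sigma_1$, hence $0 \le \sigma_i \le \rho$ for every $i$. Since $A_i \in \sigma(I_i)$ and $X|_{I_i}$ is a linear bijection of $W_i$, there exist Borel sets $\tilde A_i$ with $\{X \in A_i\} = \{W_i \in \tilde A_i\}$.

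Next, I realise the joint law of $(W_1,W_2)$ as a two-stage noise operation. On a sufficiently large probability space, take $W''$ and $W'''$ to be standard Gaussian vectors in $\R^k$, independent of each other and of $W_1$, and set
\[ V_1 = W_1, \qquad V_2 = \rho W_1 + \sqrt{1-\rho^2}\, W'', \qquad W_2 = D V_2 + (I - D^2)^{1/2} W''', \]
with $D = \rho^{-1}\Sigma$. The diagonal matrix $D$ has entries in $[0,1]$, the pair $(V_1,V_2)$ is precisely the $\rho$-correlated input of Theorem \ref{t:gns}, and a direct variance/covariance computation confirms that $(W_1,W_2)$ retains its original joint law. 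Setting $f_i = \ind_{\tilde A_i}$ and
\[ g_2(v) = \E_{W'''}\bigl[ f_2\bigl( Dv + (I - D^2)^{1/2} W''' \bigr) \bigr] \in [0,1], \]
one has $\E[g_2(V_2)] = \E[f_2(W_2)] = \P[X \in A_2]$ and
\[ \P[X \in A_1 \cap A_2] = \E[f_1(W_1) f_2(W_2)] = \E[f_1(V_1)\, g_2(V_2)]. \]

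Applying the functional form of Theorem \ref{t:gns} to $f_1, g_2 \in [0,1]$ and the $\rho$-correlated pair $(V_1,V_2)$ then yields the claimed inequality. The main obstacle is designing the coupling above: one must recognise that, because the canonical correlations $\sigma_i$ are uniformly dominated by $\rho$, the vector $W_2$ can be represented as an Ornstein--Uhlenbeck-type noisification of $V_2$, and that this additional noise is harmlessly absorbed into $g_2$ via conditioning while preserving its $[0,1]$ range --- which is exactly the hypothesis required by Theorem \ref{t:gns}.
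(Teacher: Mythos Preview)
Your proof is correct and follows essentially the same route as the paper's: both reduce to the canonical correlation form (standard Gaussian blocks with diagonal cross-covariance $\rho J$, $J_{ii}\in[0,1]$), then absorb the coordinatewise noise into a $[0,1]$-valued function $g$ so that the functional form of Theorem~\ref{t:gns} applies to the $\rho$-correlated pair. Your presentation makes the SVD and the two-step coupling more explicit, whereas the paper invokes a reference for the canonical form, but the argument is the same.
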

\begin{proof}
This is a slight generalisation of \cite[Corollary 5.2]{mos10}, and we follow its proof. Abbreviate $\rho = \rho(I_1,I_2)$, and $W := X|_{I_1}$, $V  := X|_{I_2}$. Since the conclusion of the corollary is invariant under change of coordinates, without loss of generality (see \cite[Theorem 10.3]{jan97} or the proof of  \cite[Corollary 5.2]{mos10}) we may suppose that $W$ and $V$ are both i.i.d.\ standard Gaussian vectors of equal dimension $k$ such that $\Cov[W,V]=  \rho J$ for $J = (J_{ij})$ a diagonal matrix with entries in $[0,1]$. Then we have  
\[  \P[ (W,V) \in  A_1 \cap A_2]  = \E[   \id_{Y  \in A_1}    g(\rho Y + \sqrt{1-\rho^2} Y' ) ]  \] 
where
\[ g(x_1,\ldots,x_k) =   \P\Big[  \Big( j_{11} x_1 + \sqrt{1 - j_{11}^2 }Y''_1, \ldots  , j_{kk} x_k + \sqrt{1 - j_{kk}^2 }Y''_k  \Big) \in A_2 \Big] , \]
and $Y'' = (Y''_i)_{1 \le i \le k}$ is an independent copy of $Y$. Since $\E[g(Y) ] = \P[Y \in A_2]$, applying Theorem \ref{t:gns} gives the result.
\end{proof}

We next state the analogue of Theorems \ref{t:sdi2} and \ref{t:sdi3} in the two-dimensional case:

\begin{proposition}
\label{p:2dcase}
For all $\rho \in (0,1]$, $u,v \in \R$, and $\eps \ge 0$,
\[  \Phi_\rho(u,v)  \le \Phi(u) \Phi(v + \eps) + e^{- \eps^2 / (8 \rho^2 )} . \]
Moreover, for all $\rho  \in [0, 1)$,
\[ \Phi_\rho(u,v) \le \Phi(u + \kappa \rho) \Phi(v + \kappa \rho)  \ , \quad \kappa =  2 + (1-\rho^2)^{-1/2} \max\{0,  -  \max\{u,v\}   \} .\]
\end{proposition}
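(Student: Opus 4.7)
I will prove the two inequalities separately, both via coupling/splitting arguments on the underlying pair $(X,Y) = (X, \rho X + \sqrt{1-\rho^2}\,Z')$ with $X \perp Z'$ standard Gaussians.

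\emph{Part 1.} The idea is to create an independent coupling that matches $Y$'s noise while decoupling its signal. Introduce another independent copy $\widetilde X$ of $X$, also independent of $Z'$, and set $\widetilde Y := \rho \widetilde X + \sqrt{1-\rho^2}\,Z'$. Then $\widetilde Y$ shares its noise with $Y$ but is independent of $X$, so $\P[X \le u,\,\widetilde Y \le v+\eps] = \Phi(u)\Phi(v+\eps)$. An elementary indicator comparison gives
\[
\Phi_\rho(u,v) - \Phi(u)\Phi(v+\eps) \;\le\; \P[Y \le v,\ \widetilde Y > v+\eps].
\]
Now condition on $Z'$: given $Z'$, the variables $Y$ and $\widetilde Y$ are independent Gaussians with common mean $\sqrt{1-\rho^2}\,Z'$ and variance $\rho^2$, so the conditional probability equals $\Phi(a)\Phi(-a - \eps/\rho)$ for a random threshold $a$. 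It remains to establish the universal bound
\[
\Phi(a)\Phi(b) \;\le\; e^{-(a+b)^2/8} \quad \text{whenever } a+b \le 0.
\]
For fixed sum $s = a+b \le 0$ the product is maximised at $a = b = s/2$ (the critical point of $a \mapsto \Phi(a)\Phi(s-a)$, using strict monotonicity of the inverse hazard rate $\Phi/\phi$); applying the standard tail bound $\Phi(-t) \le \tfrac{1}{2}e^{-t^2/2}$ at $t = -s/2$ gives $\Phi(s/2)^2 \le \tfrac{1}{4}e^{-s^2/4} \le e^{-s^2/8}$. Applied with $a+b = -\eps/\rho$ and taking expectation over $Z'$, this closes Part 1.

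\emph{Part 2.} By symmetry of $\Phi_\rho$, assume WLOG $u \le v$, so $\max\{u,v\} = v$. Using the alternative representation $X = \rho Y + \sqrt{1-\rho^2}\,Z''$ with $Z'' \perp Y$, and splitting on $\{Y \ge -\eta\}$, I obtain
\[
\Phi_\rho(u,v) \;\le\; \big[\Phi(v) - \Phi(-\eta)\big]\,\Phi\!\left(\tfrac{u+\rho\eta}{\sqrt{1-\rho^2}}\right) + \Phi(-\eta).
\]
The natural strategy is to pick $\eta$ so that the quotient in the first factor matches the target shift: setting $(u+\rho\eta)/\sqrt{1-\rho^2} = u + \kappa\rho$ gives $\eta = \sigma\kappa + (\sigma-1)u/\rho$ with $\sigma := \sqrt{1-\rho^2}$. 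After substitution the required inequality $\Phi_\rho(u,v) \le \Phi(u+\kappa\rho)\Phi(v+\kappa\rho)$ reduces, by direct algebra, to
\[
\Phi(-\eta)\,\Phi\!\big(-(u+\kappa\rho)\big) \;\le\; \Phi(u+\kappa\rho)\,\big[\Phi(v+\kappa\rho) - \Phi(v)\big],
\]
which I verify in the two subcases $v \ge 0$ (where $\kappa = 2$) and $v < 0$ (where the additional term $|v|/\sigma$ enters $\kappa$). In both subcases the verification is standard: the Gaussian tail bound $\Phi(-t) \le \tfrac{1}{2}e^{-t^2/2}$ controls the left-hand side, while the mean value theorem and monotonicity of the Mills ratio control the right-hand side. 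The degenerate case in which the prescribed $\eta$ turns negative (only possible for $u$ very large) is settled by the trivial estimates $\Phi_\rho(u,v) \le \Phi(v) \le \Phi(v+\kappa\rho)$ combined with $\Phi(u+\kappa\rho) \to 1$. The boundary value $\rho = 0$ is covered with equality.

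\emph{Main obstacle.} The substantive work lies in the algebraic verification of the reduced inequality in Part 2. The specific form of $\kappa$ is engineered for this estimate: the universal constant $2$ acts as a tail-probability buffer making $\Phi(-\kappa)$ uniformly small, while the correction $\max\{0,-v\}/\sigma$ compensates for the smallness of the marginal $\Phi(v)$ precisely when $v$ is very negative, which is exactly the regime where the "gain" $\Phi(v+\kappa\rho) - \Phi(v)$ would otherwise be too small to absorb the "cost" $\Phi(-\eta)\Phi(-(u+\kappa\rho))$.
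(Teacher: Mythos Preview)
Your argument for Part~1 is correct and takes a somewhat different route from the paper. The paper splits on the event $\{\rho Z \ge -\eps/2\}$ and applies the tail bound $\P[Z \ge t] \le \tfrac12 e^{-t^2/2}$ twice. You instead introduce the auxiliary coupling $(X,Y,\widetilde Y)$ with shared noise $Z'$, reduce to the uniform product-of-tails bound $\Phi(a)\Phi(b) \le e^{-(a+b)^2/8}$ for $a+b\le 0$, and verify this by optimising over $a$ at fixed sum (using strict monotonicity of $\varphi/\Phi$). Both routes give the same constant; yours is arguably cleaner.

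Your approach for Part~2 has a genuine gap. The splitting bound
\[
\Phi_\rho(u,v) \;\le\; [\Phi(v)-\Phi(-\eta)]\,\Phi(u+\kappa\rho) + \Phi(-\eta)
\]
is correctly derived, but with your prescribed $\eta = \sigma\kappa + (\sigma-1)u/\rho$ the resulting reduced inequality
\[
\Phi(-\eta)\,\Phi\!\big(-(u+\kappa\rho)\big) \;\le\; \Phi(u+\kappa\rho)\,\big[\Phi(v+\kappa\rho)-\Phi(v)\big]
\]
is \emph{false} in general. Take $\rho = 0.1$, $u=-10$, $v=0$ (so $u\le v$, $\kappa=2$, $\sigma = \sqrt{0.99}$, $\eta = 100-98\sigma \approx 2.49$). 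The left side is $\Phi(-2.49)\,\Phi(9.8) \approx 6.4\times 10^{-3}$, while the right side is $\Phi(-9.8)\,[\Phi(0.2)-\Phi(0)] \approx 4.4\times 10^{-24}$. The failure mechanism is structural: when $u$ is very negative but $v\ge 0$, both $\Phi_\rho(u,v)$ and the target $\Phi(u+\kappa\rho)\Phi(v+\kappa\rho)$ carry the tiny factor $\Phi(u+2\rho)$, yet your additive residual $\Phi(-\eta)$ only sees $\eta=O(1)$ and so cannot be this small. No choice of $\eta$ rescues the splitting: making $\eta$ large enough that $\Phi(-\eta)\lesssim\Phi(u+2\rho)$ forces the first term $\Phi((u+\rho\eta)/\sigma)$ to be far larger than $\Phi(u+2\rho)$, since replacing $\rho Y$ by its worst value $-\rho\eta$ is too crude a bound on the conditional probability.

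The paper avoids this by interpolation rather than splitting: it sets $\Psi(t)=\Phi_{t\rho}\big(u+(1-t)\kappa\rho,\,v+(1-t)\kappa\rho\big)$ and shows $\Psi'(t)\le 0$ via the derivative identity $\partial_\rho\Phi_\rho = \varphi_\rho$ and the inverse Mills ratio bound $\varphi(s)/\Phi(s)\le 2+\max\{0,-s\}$. This comparison between $\partial_\rho\Phi_\rho$ and $\max\{\partial_u\Phi_\rho,\partial_v\Phi_\rho\}$ is exactly what fixes the form of $\kappa$, and it remains uniform as $u\to-\infty$ because the relevant argument $s_1=(v-u\rho)/\sqrt{1-\rho^2}$ becomes large and positive there, keeping the ratio bounded by~$2$.
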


The proof of Proposition \ref{p:2dcase} reduces to some standard calculations for bivariate Gaussians. Before giving details, let us finish the proof of Theorems \ref{t:sdi2} and \ref{t:sdi3}:
 
\begin{proof}[Proof of Theorem \ref{t:sdi2}]
First applying Corollary \ref{c:gns}, and then the first statement of Proposition \ref{p:2dcase} with $u = \Phi^{-1}(\P[X \in A_1])$,  $v = \Phi^{-1}(\P[X \in A_2])$ and $\eps \to  \eps / \sqrt{ \|K\|_\infty}$, yields
 \[  \P[ X \in  A_1 \cap A_2]   \le  \P[ X \in A_1] \Phi\big( \Phi^{-1}( \P[X \in A_2 ] ) + \eps / \sqrt{\|K\|_\infty } \big) +  e^{ - \eps^2  / (8 \|K\|_\infty  \rho^2(I_1,I_2) ) }  .\] 
 An application of Corollary \ref{c:gii} completes the proof.
\end{proof}

\begin{proof}[Proof of Theorem \ref{t:sdi3}]
The proof is the same as Theorem \ref{t:sdi2}, except we use the second statement of Proposition \ref{p:2dcase} instead of the first statement, noting that if $\rho \in [0, 1-\delta_1]$ then 
\begin{equation*}
(1-\rho^2)^{-1/2}  \le (1 -  (1-\delta_1)^2)^{-1/2} \ge (\delta_1)^{-1/2} . \qedhere
\end{equation*}
\end{proof}

\noindent It remains to give the proof of Proposition \ref{p:2dcase}:

\begin{proof}[Proof of Proposition \ref{p:2dcase}]
For the first statement, we have
\begin{align*}
 \Phi_\rho(u,v)  & =   \P \big[  Z \le  u , \rho Z + \sqrt{1-\rho^2} Z' \le v ]  \\
 & \le   \P \big[  Z \le  u , \rho Z + \sqrt{1-\rho^2} Z' \le v ,  \rho Z \ge -  \eps/2 \big]  + \P[  \rho Z \le - \eps/2 \big]    \\
 & \le   \P [  Z \le  u  ] \P[ \sqrt{1-\rho^2} Z' \le v + \eps/2] + \P[  \rho Z  \le -  \eps/2 \big]  \\
 & \le \P[ Z \le u] \big(  \P[\rho Z +  \sqrt{1-\rho^2} Z' \le v + \eps]  + \P[  \rho Z \ge  \eps/2 \big]  \big) + \P[  \rho Z \le - \eps/2 \big]  \\
 &  \le \Phi(u) \Phi(v + \eps) +  e^{- \eps^2 / (8 \rho^2 )} ,
 \end{align*}
 where the final step used the equality in law of $Z$ and $\rho Z +  \sqrt{1-\rho^2} Z' $, and the standard Gaussian tail bound in \eqref{e:ssdip1}.
 
We turn to the second statement. Let $\kappa \ge 0$ be as in the statement of the proposition, and for $t \in [0,1]$ define 
\[ (\rho_t, u_t, v_t ) =   \big( t \rho,  u + \kappa \rho(1- t), v + (1-t) \kappa \rho \big) \quad \text{and} \quad \Psi(t) = \Phi_{\rho_t}(u_t,v_t) . \]
Observing that 
\[ \Psi(0) =  \Phi_0(u + \kappa \rho,v + \kappa \rho v) =  \Phi(u + \kappa \rho) \Phi(v + \kappa \rho)    \quad \text{and} \quad  \Psi(1) = \Phi_\rho(u,v) , \]
it remains to show that $\Psi'(t) \le 0$.
 
For $\rho \in (-1,1)$, let $\varphi_{\rho}(u,v)$ denote the pdf of a $\rho$-correlated bivariate standard Gaussian vector, and let $\varphi(u)$ be the standard Gaussian pdf. It is standard that
\[     \frac{\partial \Phi_\rho(u,v) }{\partial u} =  \varphi(u) \Phi( (v-u \rho )/\sqrt{1-\rho^2} )  \quad \text{and} \quad   \frac{\partial \Phi_\rho(u,v) }{\partial v}  =  \varphi(v) \Phi( (u-v \rho )/\sqrt{1-\rho^2} )   .   \]
Indeed this follows from the fact that, by Gaussian regression, $Z_2 | Z_1 = u $ is distributed as $u \rho + \sqrt{1-\rho^2} Z$, and similarly for $Z_1 | Z_2 = v$. It is also standard that
\[   \frac{\partial \Phi_\rho(u,v) }{\partial \rho}  =    \frac{\partial^2 \Phi_\rho(u,v) }{\partial u \partial v}  = \varphi_\rho(u,v) = \varphi(u) \varphi( (v-u \rho )/\sqrt{1-\rho^2} )  =  \varphi(v) \varphi( (u-v \rho )/\sqrt{1-\rho^2} )  . \]
In particular, abbreviating $s_1 = (v-u \rho )/\sqrt{1-\rho^2} $ and $s_2 = (v-u \rho )/\sqrt{1-\rho^2} $,
\begin{equation}
\label{e:derbounds}
 \frac{  \frac{\partial \Phi_\rho(u,v) }{\partial \rho}   } {\max \big\{   \frac{\partial \Phi_\rho(u,v) }{\partial u} ,   \frac{\partial \Phi_\rho(u,v) }{\partial v} \big\}} = \min \Big\{ \frac{\varphi(s^1) }{\Phi(s^1)} ,\frac{   \varphi(s^2) }{ \Phi(s^2) } \Big\} \le 2 + \max\{0, - \max\{s_1,s_2\} \}  ,  
 \end{equation}
where the inequality used a standard bound on the inverse Mill's ratio, valid for all $s \in \R$,
\[ \varphi(s) / \Phi(s) \le \max\{ \varphi(-1) / \Phi(-1) ,  -s -1/s \}  \le 2 + \max\{0,-s\}  . \]

We are now ready to verify that $\Phi'(t) \le 0$. First we note that, by definition, 
\begin{equation}
\label{e:bivar1}
 \rho_t  \le \rho \ , \quad  u_t - \rho_t v_t \ge u - \rho_t v \quad \text{and} \quad v_t - \rho_t u_t \ge v - \rho_t u . 
 \end{equation}
 Also, by breaking into cases depending on the signs of $u$ and $v$, we see that
\begin{equation}
\label{e:bivar2}
 \max_{t \in [0,1]}  \max\Big\{0, - \max\{u - \rho_t v,v - \rho_t u\}  \Big\}  \le \max\Big\{0, - \max\{u,v \}  \Big\}  .   
 \end{equation}
Then by the chain rule,
\[ \Phi'(t)  = \rho \Big(  \frac{\partial \Phi_{\rho_t}(u_t,v_t) }{\partial \rho}   - \kappa  \frac{\partial \Phi_{\rho_t}(u_t,v_t) }{\partial u} - \kappa \frac{\partial \Phi_{\rho_t}(u_t,v_t) }{\partial v} \Big)   , \]
and also, by   \eqref{e:derbounds},
\begin{align*}
   \max_{t \in [0,1]} \frac{  \frac{\partial \Phi_{\rho_t}(u_t,v_t) }{\partial \rho}   } {\max \big\{   \frac{\partial \Phi_{\rho_t}(u_t,v_t) }{\partial u} ,   \frac{\partial \Phi_{\rho_t}(u_t,v_t) }{\partial v} \big\}}  & \le 2 +  \max_{t \in [0,1]} \max\Big\{0,  \frac{- \max\{u_t  - \rho_t v_t ,v_t - \rho_t u_t\} }{ \sqrt{1-\rho_t^2} }  \Big\}   \\
   & \le 2 + (1-\rho^2)^{-1/2}  \max\big\{0, - \max\{u  ,v\}  \big\}  =: \kappa,
   \end{align*}
where the second inequality used \eqref{e:bivar1} and \eqref{e:bivar2}.
\end{proof}

\subsection{Proof of Proposition \ref{p:ssdi}}
Since $A_1$ and $A_2$ are increasing, and using that $X_1|_{I_1}$ and $X_1|_{I_2}$ are independent, 
\begin{align*}
\P[X \in A_1 \cap A_2] & \le   \P[X \in A_1 \cap A_2, X_2|_{I_1 \cup I_2} \le  \eps/2 ]  +  \P\big[  \cup_{i \in I_1 \cup I_2} \{ (X_2)_i \ge  \eps/ 2 \} \big]  \\
&  \le \prod_{i=1,2} \P[X_1  + \eps/2 \in A_i]  +  \P\big[  \cup_{i \in I_1 \cup I_2} \{ (X_2)_i \ge  \eps/ 2 \} \big]   \\
&  \le \prod_{i=1,2} \P[X_1  + \eps/2 \in A_i]  +  2 \max\{  |I_1|, |I_2| \}  \max_{i \in I_1 \cup I_2}  \P[ (X_2)_i \ge \eps/ 2 ]  .
\end{align*}
Similarly,
\begin{align*} 
  \prod_{i=1,2} \P[X_1  + \eps/2 \in A_i]  & \le   \prod_{i=1,2} \Big( \P[X  + \eps \in A_i]   +  \P\big[  \cup_{i \in I_2} \{ (X_2)_i \le  -\eps/ 2 \} \big]  \Big) \\
   & \le   \prod_{i=1,2} \P[X + \eps \in A_i] + 3 \max\{  |I_1|, |I_2| \}  \max_{i \in I_1 \cup I_2}  \P[ (X_2)_i \le -\eps/ 2 ]  .
\end{align*}
Combining, and using that $X_2$ is centred, 
\[  \P[X \in  A_1 \cap A_2] -   \P[X + \eps \in A_1]\P[X + \eps \in A_2] \le 5 \max\{  |I_1|, |I_2| \} \max_{i \in I_1 \cup I_2}  \P[ (X_2)_i \ge \eps/ 2 ] .\]
To conclude recall that $\text{Var}[X_2(i)] \le \sigma^2 $ for all $i \in I_1 \cup I_2$. Letting $Z$ denote a standard Gaussian, we have for $i \in I_1 \cup I_2$ and all $t \ge 0$,
\begin{equation}
\label{e:ssdip1}
   \P[ (X_2)_i \ge t ] \le \P[ Z \ge t/ \sigma ]  \le \frac{ e^{- t^2 /  (2 \sigma^2)  }}{2}  ,
  \end{equation}
where the last step is a standard Gaussian tail bound.  Setting $t = \eps/2$ gives the result.

\subsection{Proof of Proposition \ref{p:neg}}
Suppose \eqref{e:sdineg} were true, let $X = (Z, Z)$ where $Z$ is a standard Gaussian random variable, and fix $\kappa \in (0,1)$ to be chosen later. Then \eqref{e:sdineg} applied to the events $A_1 = A_2 = \{ Z \ge u\}$ and sprinkling parameter $\eps = \kappa u$ implies that, for all $u > 0$,
\begin{equation}
\label{e:sdineg2}
   \P[Z \ge u] \le  \P[ Z \ge u (1-\kappa) ]^2 + c_1 e^{- c_2 \kappa^2 u^2 } . 
   \end{equation}
Using the standard fact that, as $u \to \infty$,
\[ \P[Z \ge u] \sim  \frac{1}{\sqrt{2 \pi} u } e^{-u^2 /2 }  , \]
  taking $u \to \infty$ in \eqref{e:sdineg2} shows that $1/2 \ge \min \{ 1 - 2 \kappa + \kappa^2   , c_2 \kappa^2 \} $.
  In particular, if $\kappa > 1 - \sqrt{1/2}$ then $c_2 \le 1/ (2 \kappa^2)$. Hence taking $\kappa \downarrow 1 - \sqrt{1/2} \approx 0.293\ldots$ \ yields that $c_2 \le 3 + 2 \sqrt{2} \approx 5.828\ldots$

\bigskip
\section{Application to level-set percolation of Gaussian fields}
\label{s:per}

In this section we give an application of Theorem \ref{t:sdi} in Gaussian percolation theory. Let $f$ be a Gaussian field on either $T=\Z^d$ or $T=\R^d$, $d \ge 2$, and if $T = \R^d$ assume that $f$ is continuous. For $\ell \in \R$, let $\P_\ell$ denote the law of $f + \ell$. Gaussian percolation theory is the study of the phase transition in the global connectivity of the excursion sets $\{f + \ell  \ge 0\}$ as $\ell$ increases. It is natural to define a critical parameter $\ell_c \in [-\infty,\infty]$ as
\[ \ell_c = \inf \Big\{ \ell \in \R : \P_\ell \big[ \{f \ge 0\} \text{ contains an unbounded path-connected component} \big] > 0 \Big\}  ,\]
where \textit{path} refers to a lattice path if $T = \Z^d$ and a continuous path if $T = \R^d$. A central question in the theory is whether the phase transition is \textit{non-trivial}, i.e.\ whether $\ell_c \in (-\infty,\infty)$, and one expects this to be true in wide generality. 

\subsection{Conditions for non-triviality}

Our main result gives sufficient conditions for non-triviality. For $r > 0$, let $B(r)$ denote the Euclidean ball of radius $r$ centred at the origin.

\begin{theorem}
\label{t:nontriv}
If both the following conditions are satisfied then $\ell_c \in (-\infty,\infty)$:
\begin{enumerate}
\item (Uniformly bounded local suprema) 
\[  \sup_{x \in T} \E \Big[ \sup_{  y \in x + B(1) \cap T }  |f(y)| \Big] < \infty .\]
\item (Polylogarithmic correlation decay) There exist constants $c, \delta > 0$ such that 
\[ |\Cov[f(x),f(y)] | \le  g(\|x-y\|_2) , \quad x,y \in T ,\]
where $g(r) =  c  (\log (1+r))^{-3-\delta}$.
\end{enumerate}
\end{theorem}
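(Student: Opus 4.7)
The plan is to run a multi-scale renormalization argument powered by Theorem \ref{t:sdi}, following the template developed for strongly-correlated percolation models such as random interlacements and the Gaussian free field. I will reduce to the lattice case $T = \Z^d$ via standard discretization (which uses condition (1) to bound the oscillation of $f$ over unit boxes), and concentrate on the lower bound $\ell_c > -\infty$; the upper bound $\ell_c < \infty$ is then obtained either by applying the same scheme to $-f$ together with planar duality when $d = 2$, or by a parallel renormalization constructing large connected components in $\{f + \ell \ge 0\}$ for $\ell$ large when $d \ge 3$.

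Fix exponential scales $L_k = L_0 \cdot 2^k$ and let $A_k^\ell$ denote an increasing crossing event at scale $L_k$ in the excursion set $\{f + \ell \ge 0\}$--- concretely, the existence of a nearest-neighbour path crossing the annulus $\Lambda_{2 L_k} \setminus \Lambda_{L_k}$. Set $p_k(\ell) = \P[A_k^\ell]$. The key geometric input is a cascade: occurrence of $A_k^\ell$ forces occurrence of $A_{k-1}^\ell$ in two disjoint sub-annuli separated by distance $\gtrsim L_{k-1}$. A union bound over the $O(1)$ relevant pairs of such sub-annuli, combined with Theorem \ref{t:sdi} and the covariance estimate $\|K_{I_1,I_2}\|_\infty \le g(L_{k-1})$ from condition (2), yields the recursion
\begin{equation*}
p_k(\ell) \;\le\; C\, p_{k-1}(\ell + \eps_k)^2 \;+\; \frac{c\, g(L_{k-1})}{\eps_k^2},
\end{equation*}
for an arbitrary sprinkling sequence $\{\eps_k\}_{k \ge 1}$. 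The upward $\ell$-shift on the right-hand side arises because $A_k^\ell$ is increasing in $f$, so positive sprinkling of $f$ is equivalent to a positive shift in $\ell$.

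The parameter balance pinning down the threshold $\gamma > 3$ is as follows. Reparametrise $\tilde p_k := p_k\bigl(\ell + \sum_{j > k} \eps_j\bigr)$, converting the recursion to $\tilde p_k \le C \tilde p_{k-1}^2 + c\, g(L_{k-1})/\eps_k^2$. Choose $\eps_k = k^{-\alpha}$ with $\alpha \in \bigl(1, (\gamma-1)/2\bigr)$, a non-empty interval precisely when $\gamma > 3$. Then $\sum_k \eps_k$ is finite, so $\tilde p_0$ is evaluated at a parameter $\ell'$ comparable to $\ell$; and the error term $c\, g(L_{k-1})/\eps_k^2 \sim k^{-(\gamma - 2\alpha)}$ is summable in $k$. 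Provided $\tilde p_0$ is below the fixed threshold $1/(4C)$, the quadratic part of the recursion forces $\tilde p_k \to 0$ doubly exponentially, and the summable error term is compatible with this decay in the sense that $\sum_k \tilde p_k < \infty$. The initial smallness of $\tilde p_0$ for $\ell$ sufficiently negative follows from condition (1) together with Borell--TIS concentration applied to $\sup_{x \in \Lambda_{L_0}} f(x)$: any path in $\{f + \ell \ge 0\}$ at scale $L_0$ forces $\sup_{\Lambda_{L_0}} f \ge -\ell$, whose probability is Gaussian-small when $-\ell$ exceeds the expected supremum by a fixed amount. A Borel--Cantelli conclusion applied to the events $A_k^\ell$ at successive scales then rules out an infinite cluster through $0$.

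The main technical obstacle will be implementing the cascade so that the two sub-events live on genuinely disjoint index sets in a way that survives for path-crossing events; this is typically handled by a ``fence'' construction that fixes a coarse skeleton of sub-boxes any crossing path must traverse, and ensures the relevant localized events are supported on disjoint regions at distance $\gtrsim L_{k-1}$ (so that the covariance between their coordinates is indeed bounded by $g(L_{k-1})$). A secondary difficulty is the continuous-to-discrete reduction, which uses condition (1) and a chaining argument to bound the oscillation of $f$ within lattice cells. Finally, the upper bound $\ell_c < \infty$ in $d \ge 3$ is the other site of genuine work: one must produce an infinite cluster, not merely rule out its complement, which requires a second (again Theorem \ref{t:sdi}-based) renormalization building long-range connections within $\{f + \ell \ge 0\}$ for $\ell$ large, followed by a gluing argument to assemble these into a single infinite component.
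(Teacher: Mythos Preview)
Your proposal is correct and follows essentially the same bootstrap scheme as the paper: a geometric cascade plus the sprinkled decoupling inequality gives a quadratic recursion with additive error $g(L_{k-1})/\eps_k^2$, and the choice $\eps_k \asymp k^{-\alpha}$ with $\alpha \in (1,(\gamma-1)/2)$ makes both the sprinkling and the error summable precisely when $\gamma > 3$.

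A few places where the paper is simpler than your outline. First, no discretization to $\Z^d$ is performed; the continuous sprinkled decoupling inequality is applied directly, so your ``chaining argument to bound oscillation within lattice cells'' is not needed. Second, for $\ell_c < \infty$ the paper reduces without loss of generality to $d=2$ (an infinite component in a two-dimensional slice gives one in $\R^d$), and then runs the \emph{same} recursion on the complementary long box-crossing event to show that crossing probabilities converge to $1$ with summable gap $(\log R)^{-1-\delta'}$; a single Borel--Cantelli on nested box-crossings then builds the infinite cluster. Your proposed separate renormalization-plus-gluing for $d\ge 3$ is therefore unnecessary. Third, for $\ell_c > -\infty$ you do not need Borel--Cantelli or summability of the error term: $p_k \to 0$ already forces $\P_\ell[0 \longleftrightarrow \infty] = 0$, and this only requires $\gamma > 2$ (as the paper records separately). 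The stronger $\gamma > 3$ is used solely in the upper bound, where summability of $p_k$ over scales is what feeds Borel--Cantelli. Finally, the initial smallness is obtained via Markov's inequality on $\sup f$ using condition~(1) directly, rather than Borell--TIS, and the ``fence construction'' you flag is not an obstacle: the cascade is the elementary observation that a crossing of the scale-$5R$ annulus forces crossings of two scale-$R$ annuli at mutual distance $\ge R$, over $O_d(1)$ possible positions.
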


\noindent Theorem \ref{t:nontriv} establishes non-triviality in very wide generality:

\begin{example}
The first condition of Theorem \ref{t:nontriv} is satisfied if either:
\begin{enumerate}
\item $f$ is stationary;
\item $T = \Z^d$ and both $\E[f(x)]$ and $\Var[f(x)]$ are bounded; or
\item $T = \R^d$, $\E[f(x)]$ and $\Var[f(x)]$ are bounded, and $K(x,y)$ is H\"{o}lder continuous.
\end{enumerate}
\end{example}

\begin{example}[Monochromatic random waves]
\label{e:mrw}
Suppose $f$ is the \textit{monochromatic random wave} on $\R^d$, $d \ge 2$, that is, the centred isotropic Gaussian field with covariance 
\[ K(0,x) = \widehat{\mu_{\mathbb{S}^{d-1}}}(x) =  \|x\|_2^{-d/2 +1} J_{d/2 - 1}(\|x\|_2) = O\big( \|x\|_2^{-(d-1)/2}\big), \]
where $\widehat{\mu_{\mathbb{S}^{d-1}}}$ denotes the Fourier transform of the normalised Lebesgue measure on the sphere, and $J_n$ is the order-$n$ Bessel function. Then the conditions of Theorem \ref{t:nontriv} are satisfied and $\ell_c \in (-\infty,\infty)$. Previously this was only known in the case $d=2$ \cite{mrv20} (c.f.\ Remark \ref{r:riv}).
\end{example}

The question of non-triviality for Gaussian percolation models has received substantial attention in the literature, especially in the stationary setting. Early works on this topic were \cite{ms83a,ms83b,ms83c}, which proved non-triviality for stationary fields with bounded spectral density, including fields with $K(0,\cdot) \in L^1$. For strongly-correlated fields, non-triviality was first established for the GFF on $\Z^d$, $d \ge 3$, \cite{rs13}, using a sprinkled decoupling inequality similar to in Proposition~\ref{p:ssdi}. Recently this has been extended to a wider class of stationary strongly-correlated fields which satisfy a decomposition of the form \eqref{e:decomp} \cite{cn21,ms22}, including the Cauchy fields in Example \ref{e:cauchy}, as well as to many isotropic \textit{planar} fields using techniques specific to the planar case \cite{mrv20}.

\begin{remark}[Optimality of the decay assumption]
We do not expect that the polylogarithmic decay exponent $\gamma > 3$ in Theorem \ref{t:nontriv} is optimal. Indeed in \cite{ms22} non-triviality was established for a class of stationary fields with polylogarithmic decay with exponent $\gamma > 1$, and in \cite{mrv20} for smooth isotropic fields on $\R^2$ whose correlation decay is of order $(\log \log r)^{-2-\delta}$. In Theorem~\ref{t:subcrit} below we show that polylogarithmic decay with exponent $\gamma > 2$ is sufficient to conclude that $\ell_c > -\infty$. See Section \ref{s:proofs} for an informal discussion on the role of the decay exponents $\gamma > 2$ and $\gamma > 3$ to conclude $\ell_c > - \infty$ and $\ell_c < \infty$ respectively.

\smallskip
\noindent It is natural to ask whether \textit{qualitative} mixing conditions are sufficient for non-triviality:
\end{remark}

\begin{question}
Suppose $f$ satisfies the uniform boundedness condition in Theorem \ref{t:nontriv}, and $\Cov[f(x),f(y)] \le g(\|x-y\|_2)$ for some $g(r)  \to 0$ as $r \to \infty$. Then is $\ell_c \in (-\infty,\infty)$? What if we instead assume that $f$ is stationary and ergodic?
\end{question}

\begin{remark}
\label{r:riv}
Alejandro Rivera communicated to us an alternative proof of non-triviality for the monochromatic random waves in Example \ref{e:mrw}, based on the observation that non-sprinkled decoupling bounds of the form
\[       \big| \P[  A_1 \cap A_2    ] - \P[A_1] \P[A_2]  \big| \le c \,  \text{dist}(D_1,D_2)^{2d} \| K_{D_1,D_2} \|_\infty  ,\]
such as those appearing in \cite{bmr20}, are sufficient to prove non-triviality if the covariance $K$ decays polynomially in the distance; this is similar to the argument developed for the Poisson cylinder model in \cite{tw12}. However, as well as requiring stronger decay than in Theorem \ref{t:nontriv}, this argument gives a weaker quantitative conclusion than we obtain (in Section \ref{s:subcrit} below).
\end{remark}

\subsection{Rate of subcritical decay of connectivity}
\label{s:subcrit}
The proof of Theorem \ref{t:nontriv} also provides quantitative bounds on the rate of connectivity decay in the subcritical regime. 

\smallskip
For $A,B \subset \R^d$, let $\{A \longleftrightarrow B\}$ denote the event that there is a path (a lattice path if $T = \Z^d$ and a continuous path if $T = \R^d)$ in $\{f \ge 0\}$ that intersects $A$ and $B$. Define 
\begin{equation}
\label{e:tildelc}
\widetilde{\ell}_c = \sup \Big\{ \ell \in \R :   \liminf_{R \to \infty} \sup_{x \in \R^d} \P_\ell[ x + B(R) \longleftrightarrow x + \partial B(2R) ] = 0  \Big\} .
\end{equation}
By countable additivity it is clear that $\widetilde{\ell}_c \le \ell_c$, and it is expected that $\widetilde{\ell}_c = \ell_c$ in wide generality, although this has only been verified in certain special cases \cite{dgrs20, mrv20, m22}.

\begin{theorem}
\label{t:subcrit}
Let $f$ satisfy the first condition of Theorem \ref{t:nontriv}, and let $g: \R^+ \to \R^+$ be a decreasing function satisfying 
\[ |\Cov[f(x),f(y)] | \le   g(\|x-y\|_2) , \quad x,y \in T . \]
Assume there exists a $\delta > 0$ and a decreasing function $h': \R^+ \to \R^+$ satisfying, as $r \to \infty$,
\[   h(r) :=   g(r)(\log r)^{2 +\delta} \to 0 , \quad    h(r) / h'(25 r) < \infty,   \quad \text{and} \quad  h'(r)^2 / h'(5r) \to 0 . \]
Then $\widetilde{\ell}_c > -\infty$, and for every $\ell < \widetilde{\ell}_c$ and there exists $c > 0$ such that
 \begin{equation}
 \label{e:tsubcrit1}
  \sup_{x \in \R^d} \P_\ell[  x   \longleftrightarrow  x + \partial  B(R) ]  \le c h'(R)  , \quad R \ge 2. 
  \end{equation}
\end{theorem}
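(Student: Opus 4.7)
The proof follows a multi-scale renormalization strategy based on the sprinkled decoupling inequality (Theorem \ref{t:sdi}), in the spirit of \cite{rs13, ms22} but calibrated to the polylogarithmic correlation decay.

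To set up the iteration, I fix $\ell < \widetilde{\ell}_c$, introduce geometric scales $R_k = 5^k R_0$, and choose sprinkling parameters $\eps_k = a(\log R_k)^{-1-\delta/2}$ with $a > 0$ small. Since $\delta > 0$ the series $\sum_k \eps_k$ converges, and by taking $a$ small I can ensure $\sum_k \eps_k < \widetilde{\ell}_c - \ell$. Correspondingly I define $\ell_k = \ell + \sum_{j \ge k} \eps_j$, a decreasing sequence with $\ell_k \downarrow \ell$ and $\ell_0 < \widetilde{\ell}_c$. With this calibration, the natural sprinkled-decoupling error term satisfies $g(R_k)/\eps_k^2 = a^{-2} g(R_k)(\log R_k)^{2+\delta} = a^{-2} h(R_k) \to 0$ by hypothesis.

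For the renormalization step, let $q_k = \sup_{x \in \R^d} \P_{\ell_k}[x + B(R_k) \longleftrightarrow x + \partial B(2R_k)]$. A crossing from $B(R_{k+1})$ to $\partial B(2R_{k+1})$ must traverse two disjoint concentric annuli of radial width $R_k$ and mutual distance at least $R_k$; each such annular traversal contains a scale-$R_k$ annular crossing centred at one of $O(1)$ positions on a sphere (the implicit constant depending only on $d$). Applying \eqref{e:tsdi1} to each pair of disjoint scale-$R_k$ crossing events with sprinkling $\eps_k$, and using the separation to bound $\|K_{I_1,I_2}\|_\infty \le g(R_k)$, a union bound over the $O(1)$ candidate position pairs yields the recursion
\[
q_{k+1} \;\le\; C_1\, q_k^2 \;+\; C_1\, g(R_k)/\eps_k^2 \;\le\; C_1\, q_k^2 \;+\; C_2\, h(R_k).
\]

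To close the argument I show by induction that $q_k \le C' h'(R_k)$ for all $k \ge k_0$ and some large $C' > 0$. The assumption $h'(r)^2 / h'(5r) \to 0$ absorbs the square term, while the assumption $h(r)/h'(25r)$ bounded, combined with the monotonicity of $h'$, gives $h(R_k) \le C_3 h'(25R_k) \le C_3 h'(5R_k) = C_3 h'(R_{k+1})$, handling the error term. For the base case at $\ell < \widetilde{\ell}_c$, the definition of $\widetilde{\ell}_c$ as a $\liminf = 0$ condition provides a scale $R_{k_0}$ along a subsequence making $q_{k_0}$ arbitrarily small, seeding the induction. To obtain $\widetilde{\ell}_c > -\infty$ I apply the same machinery for $\ell$ very negative: the deterministic inclusion $\{B(R_0) \longleftrightarrow \partial B(2R_0)\} \subseteq \{\sup_{B(2R_0)}(f+\ell) \ge 0\}$ together with the Borell--TIS inequality, using condition (1) and a covering argument to control $\E[\sup_{B(2R_0)} f]$, makes $q_0$ arbitrarily small. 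The resulting bound $q_k \le C' h'(R_k)$ yields \eqref{e:tsubcrit1} by monotonicity in $R$ (for interpolating between discrete scales) and in $\ell$ (since $\ell_k > \ell$).

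The main obstacle I anticipate is the delicate calibration of the sprinkling sequence: $\eps_k$ must decay slowly enough that $g(R_k)/\eps_k^2$ is dominated by $h'(R_{k+1})$, yet fast enough that $\sum_k \eps_k$ remains finite and smaller than $\widetilde{\ell}_c - \ell$. The $(\log r)^{2+\delta}$ margin is precisely what allows $\eps_k \sim (\log R_k)^{-1-\delta/2}$ to satisfy both constraints simultaneously, which corresponds to the threshold exponent $\gamma > 2$ in the correlation decay. A secondary technical point is the chaining/covering estimate needed to bound $\E[\sup_{B(R)} f]$ from the hypothesis that only \emph{unit-ball} suprema are uniformly controlled.
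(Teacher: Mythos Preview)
Your proposal is correct and follows essentially the same route as the paper: the same geometric scales $R_k=5^kR_0$, the same sprinkling sequence $\eps_k\asymp(\log R_k)^{-1-\delta/2}$, the same bootstrap recursion $q_{k+1}\le C_1 q_k^2+C_2 h(R_k)$ obtained from \eqref{e:tsdi1}, and the same inductive closure using $h'(r)^2/h'(5r)\to 0$ together with $h(r)\le c'h'(25r)$. The only notable difference is in the a~priori input used to seed the induction for $\widetilde\ell_c>-\infty$: the paper uses the elementary Markov bound $\sup_x\P_\ell[E_{x,R}]\le c_d R^d \sup_x\E[\sup_{x+B(1)}|f|]/|\ell|$ (no Borell--TIS or chaining needed), which already follows directly from the uniformly bounded local suprema hypothesis and a covering by unit balls; your Borell--TIS route works too but is more than is required.
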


\begin{example}
To clarify the conditions in Theorem \ref{t:subcrit}, let us give some examples:
\begin{enumerate}
\item If $g(r) = (\log r)^{-\gamma}$, $\gamma > 2$, or $g(r) = r^{-\alpha}$, $\alpha > 0$, one may take $h' = h$. In particular this shows that polylogarithmic decay with exponent $\gamma > 2$ is sufficient for $\ell_c > -\infty$.
\item If $h(r) = g(r)(\log r)^{2 +\delta} = e^{-c r^\beta}$, $\beta \in (0,1]$, one may take $h'(r) = e^{-(c/25) r^\beta }$.
\end{enumerate}
Note that one can never take $h'$ decaying faster than exponential. This is natural, since if $f$ is stationary and $K \ge 0$, then $\P_\ell[  0   \longleftrightarrow  \partial  B(R) ]  \ge e^{-cR}$ by positive associations.
\end{example}

\begin{example}[Random plane wave]
Let $f$ be the monochromatic random wave from Example~\ref{e:mrw} in dimension $d=2$. Then it is known \cite{mrv20} that $\ell_c = \widetilde{\ell}_c = 0$, and Theorem \ref{t:subcrit} implies that, for every $\ell < 0$ and $\delta > 0$ there exists a $c > 0$ such that
\[  \P_\ell[  0  \longleftrightarrow \partial  B(R) ]   \le  c R^{-1/2} (\log R)^{2+\delta}  , \quad R \ge 2. \]
This is the first polynomial bound on the subcritical connectivity decay of the random plane wave; previously only the weaker bound $c_1 e^{- c_2 \sqrt{\log R} } $ was known \cite{mrv20}.
\end{example}

\begin{remark}
In the examples in \cite{pr15,grs21,ms22} the stronger bound 
\begin{equation}
\label{e:subcritstrong}
  \P_\ell[  0  \longleftrightarrow \partial  B(R) ]   \le  c_1 e^{ - c_2 \min\{R, 1/ g(R) \}  } 
  \end{equation}
was established for all $\ell < \widetilde{\ell}_c$ by exploiting a decoupling inequality similar to \eqref{e:ssdi2} (more precisely this gives \eqref{e:subcritstrong} up to a logarithmic factor in the exponent, with further analysis needed to remove this factor). 
\end{remark}

\begin{question}
Does the bound \eqref{e:subcritstrong} hold under the conditions of Theorem \ref{t:subcrit}? 
\end{question}

\begin{remark}
A postiori, under the assumptions of Theorem \ref{t:subcrit} one can replace the $\liminf$ in \eqref{e:tildelc} with a limit without change to the value of $\widetilde{\ell}_c$, although this is not clear in general.
\end{remark}

\subsection{Proof of Theorems \ref{t:nontriv} and \ref{t:subcrit}}
\label{s:proofs}
The proofs rely on a variant of the Kesten bootstrap \cite[Section 5]{kes82}; although similar arguments have appeared elsewhere (e.g.\ \cite{pt15,mv20,ms22}), let us begin by giving a brief outline of the method.

For every $x \in \R^d$ and $R > 0$, let $E_{x,R}$ denote the `annulus crossing' event $\{x + B(R) \longleftrightarrow x + \partial B(2R)\}$. We aim to find a bound on $ \sup_{x \in \R^d}  \P[E_{x,5R}]$ in terms of the \textbf{square} of $ \sup_{x \in \R^d}  \P[E_{x,R}]$, which by iterating along a geometric sequence of scales will yield a rapidly decaying bound on $\sup_{x \in \R^d} \P[E_{x,R}]$ (provided the initial scale is chosen correctly).

The key observation is the following: for every $x \in \R^d$ and $R > 0$ one may choose two collections of points $(x_i)_{1 \le i \le n_d}$ and $(y_j)_{1 \le j \le n_d}$, where $n_d > 0$ depends only on the dimension, such that $\|x_i - y_j \|_2 \ge R$ for all $i,j$, and
\[  E_{x,5R} \implies \cup_{i,j} \{ E_{x_i, R}  \cap E_{y_j, R} \} . \]
  Hence, by the union bound and the sprinkled decoupling inequality \eqref{e:tsdi1} (or \eqref{e:tcsdi1} in the case $T = \R^d$), we establish the sprinkled bootstrapping inequality
  \begin{equation}
\label{e:boot}
  \sup_{x \in \R^d}  \P_{\ell-\eps}[   E_{x,5R} ]  \le  n_d^2  \Big( \sup_{x \in \R^d} \P_{\ell}[ E_{x,R}]^2 +  c g(R) \eps^{-2} \Big)  , \quad  R > 0, \ell \in \R, \eps > 0 .
  \end{equation}
We obtain Theorems \ref{t:nontriv} and \ref{t:subcrit} from a deterministic analysis of \eqref{e:boot}, similar to in \cite{pt15,mv20,ms22}. Before embarking on this, let us give an informal explanation of the role of polylogarithmic decay exponents $\gamma > 3$ and $\gamma > 2$ in analysing \eqref{e:boot}.

Observe two key features of \eqref{e:boot}: (i) one must `sprinkle' the level from $\ell$ to $\ell-\eps$ when moving up a scale;  and (ii) there is an additive error $c g(R) \eps^{-2}$ (the multiplicative error $n_d^2$ plays no role). To ensure the sprinkling does not send the level to $-\infty$, we need it to be summable over the scales, i.e.\ we need $\eps = \eps_R \approx (\log R)^{-1-\delta/2}$. This choice of $\eps$ makes the additive error $\approx c g(R)(\log R)^{2+\delta} =: h(R)$. Since this must tend to zero, we require $g(R) \ll (\log R)^{-2-\delta}$ as in Theorem \ref{t:subcrit}. Given this, the output of the bootstrap is a bound on crossing probabilities of the same order as the additive error $h(R) = g(R)(\log R)^{2+\delta}$, which is roughly the content of Theorem \ref{t:subcrit} (and also yields $\ell_c > -\infty$). To further establish $\ell_c < \infty$, we need in addition that crossings probabilities are summable over the scales to allow for a Borel-Cantelli argument. This requires $h(R) \ll (\log R)^{-1-\delta}$, and hence $g(R) \ll (\log R)^{-3-2\delta}$ as in Theorem \ref{t:nontriv}.

\begin{proof}[Proof of Theorem \ref{t:subcrit}]
    For later use we observe that, by symmetry, the union bound, and Markov's inequality, for all $\ell < 0$ and $R \ge 1$, 
\begin{equation}
\label{e:ebound}
 \sup_{x \in \R^d} \P_\ell[E_{x,R}] \le   \sup_{x \in \R^d} \P\Big[ \sup_{y \in x + B(2R)} f(y) \ge - \ell \Big] \le  \frac{c_d R^d \sup_{x \in \R^d} \E \Big[ \sup_{y \in x + B(1)} |f(y)| \Big] }{|\ell|}    .
  \end{equation}
  In particular, by the assumption of uniformly bounded local suprema, if $R$ is fixed then we can ensure that $ \sup_{x \in \R^d} \P_\ell[E_{x,R}]$ is arbitrary small by taking $\ell$ sufficiently small.
  
We first show $\widetilde{\ell}_c > -\infty$. Define $\ell' \in \R$ and $R_0 > 1$ to be to be determined later, and the decreasing sequence $(\ell_n)_{n \ge 1}$ satisfying
 \begin{equation}
 \label{e:ell}
 \ell_1 = \ell'  \quad \text{and} \quad \ell_{n+1} = \ell_n - (\log (R_0 5^n ))^{-1-\delta/2}   .
 \end{equation}
 Observe that $\ell_\infty = \lim_{n \to \infty} \ell_n  > -\infty$. For $n \in \N$, define $p_n = \sup_{x \in \R^d}  \P_{\ell_n}[ E_{x, R_0 5^n} ] $, which by \eqref{e:boot} satisfies
  \begin{equation}
  \label{e:induct1}
     p_{n+1} \le n_d^2 \big( p_n^2 +    g( R_0 5^n) (\log (R_0 5^n ))^{2+\delta}  \big) =  n_d^2 p_n^2 +    c n_d^2 h(R_0 5^n) . 
     \end{equation}
  Now recall we assume that 
  \begin{equation}
  \label{e:induct2}
  h(r) \le c' h'(25r) 
  \end{equation}
   for some $c' > 0$, and also that $ h'(r)^2 / h'(5r) \to 0$ as $r \to \infty$. This allows us to fix $R_0$ sufficient large so that
  \begin{equation}
  \label{e:r0}
    h'(r)^2 / h'(5r) \le (4 n_d^4 c c')^{-1}   \ , \quad \text{for all } r \ge R_0 .
    \end{equation}
    Moreover, by the discussion following \eqref{e:ebound} we may fix $\ell' \in \R$ sufficiently small so that 
  \begin{equation}
  \label{e:base} 
  p_1 \le  2 n_d^2 c c'  \times h'(25 R_0) .  
  \end{equation}
Using \eqref{e:induct1}--\eqref{e:base}, an inductive argument then shows that
  \[ p_n \le  2 n_d^2 c c'  \times h'(5 \times R_0 5^n) \]
  for all $n \in \N$. Since $h'(r) \to 0$ we deduce that $p_n \to 0$, which by monotonicity implies that $\widetilde{\ell}_c \ge \ell_\infty  > -\infty$.

 Let us now prove \eqref{e:tsubcrit1}. Let $\ell < \widetilde{\ell_c}$ be given, and fix $\ell' \in (\ell,  \widetilde{\ell_c})$ arbitrarily.  Let $R_0 > 1$ be sufficiently large so that \eqref{e:r0} holds, and also so that the decreasing sequence $(\ell_n)_{n \ge 1}$ defined via \eqref{e:ell} satisfies $\ell_\infty = \lim_{n \to \infty} \ell_n > \ell$. Define $p_n = \sup_{x \in \R^d}  \P_{\ell_n}[ E_{x, R_1 5^n} ] $ for a $R_1 \ge R_0$ to be determined later. Then by \eqref{e:boot} we have
 \[     p_{n+1} \le  n_d^2 p_n^2 +   c n_d^2 g(R_1 5^n) (\log (R_0 5^n ))^{2+\delta} \le  n_d^2 p_n^2 +   c n_d^2 h(R_0 5^n) , \]
 where the second inequality used that $g$ is decreasing. Since $\ell' < \widetilde{\ell}_c$, we may choose $R_1$ sufficiently large so that \eqref{e:base} holds. Hence by induction we have again that
  \[ p_n \le  2 n_d^2 c c'  \times h'(5 \times R_0 5^n)  .\]
 Since $h'$ is decreasing, by monotonicity this gives the result.
 \end{proof}
 
 \begin{proof}[Proof of Theorem \ref{t:nontriv}]
 Without loss of generality we may assume that $d=2$. In this context it is more convenient to work with crossings of rectangles rather than annuli, so we define
  \[ \hat{\ell}_c =  \inf \Big\{ \ell \in \R :   \limsup_{R \to \infty} \inf_{x \in \R^d} \min\big\{  \P_\ell[ x + \textrm{HCross}(R)  ] , \P_\ell[ x + \textrm{VCross}(R)  ]   \big\}  = 1  \Big\} , \]
 where $ \textrm{HCross}(R)$ is the `horizontal box-crossing' event that there is a path in $\{f \ge 0\}|_{[0,5R] \times [0,R]}$ that intersects $\{0\} \times [0,R]$ and $\{5R\} \times [0, R]$, and $\textrm{VCross}(R)$ is the  `vertical' analogue with the coordinates interchanged. Using a similar argument to in the proof of \eqref{e:tsubcrit1} one can show that $\hat{\ell}_c < \infty$, and further that for all $\ell > \hat{\ell}_c$ we have
\[  \inf_{x \in \R^d} \min \big\{  \P_\ell[ x + \textrm{HCross}(R)  ] , \P_\ell[ x + \textrm{VCross}(R)  ]   \big\}  \ge 1 - c (\log R)^{-1-\delta'}  \]
for some $c,\delta' > 0$. Observe finally that, for every $n_0 \ge 1$,
\[  \bigcap_{n \ge n_0}   \big(  \textrm{HCross}(5^n)   \cap   \textrm{VCross}(5^n)    \big)   \subseteq \{f \ge 0\} \text{ has an infinite component} . \]
By the Borel-Cantelli lemma we deduce that $\ell_c \le \hat{\ell}_c < \infty$, which completes the proof.
\end{proof}

\bigskip
\section{Consequences for non-sprinkled decoupling}
\label{s:nsd}

In this section we discuss consequences of Theorem \ref{t:sdi} for non-sprinkled decoupling inequalities of the form \eqref{e:nsdi}. These arise by combining Theorem \ref{t:sdi} with some a priori control on the stability of $\P_{\ell}[A]$ under perturbations of $\ell$. Here we consider two stability estimates -- (i) in terms of the capacity, and (ii) for the class of `topological events' -- and we believe that other types of stability estimates may also give interesting consequences.

\subsection{Stability via the capacity}
Recall that $X = (X_i)_{1 \le i \le n}$ is a Gaussian vector with covariance $K$. For $I \subseteq \{1,\ldots,n\}$, define the \textit{capacity} of~$I$
\begin{equation}
\label{e:cap}
\Capa(I) = \Capa_K(I) =   \Big( \inf_{\mu \in \mathcal{P}(I)} \sum_{i,j \in I} \mu(i) K(i,j) \mu(j) \Big)^{-1}  \in [0, \infty ] , 
\end{equation}
where $\mathcal{P}(I)$ is the set of probability measures on $I$. As a consequence of the Cameron-Martin theorem (see Section \ref{s:stabproof} for details), one has the following stability estimate for increasing events: 

\begin{proposition}
\label{p:cap}
For every $I \subseteq \{1,\ldots,n\}$, increasing event $A \in \sigma(I)$, and $\eps > 0$,
\[    \P[X + \eps \in A ] - \P[X \in A]  \le \frac{\eps \sqrt{\Capa(I) }}{2} .    \]
\end{proposition}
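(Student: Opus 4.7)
The plan is to combine the variational definition of the capacity with the finite-dimensional Cameron--Martin theorem. I may assume $\Capa(I) < \infty$ (the other case is trivial) and, by the same approximation argument used in the proof of Theorem \ref{t:sdi}, that $X$ is non-degenerate. The first task is to extract an equilibrium measure: let $\mu^* \in \mathcal{P}(I)$ attain the infimum in \eqref{e:cap}, so $\mu^{*T} K \mu^* = 1/\Capa(I)$. A first-order argument then shows that the equilibrium potential is bounded below on the \emph{whole} of $I$, namely
\[ (K\mu^*)_i \;\ge\; 1/\Capa(I) \qquad \text{for every } i \in I. \]
Indeed, if $(K\mu^*)_j < 1/\Capa(I)$ for some $j \in I$, then the derivative at $t = 0^+$ of $t \mapsto (\mu^* + t(\delta_j - \mu^*))^T K(\mu^* + t(\delta_j - \mu^*))$ equals $2((K\mu^*)_j - 1/\Capa(I)) < 0$; since $\mu^* + t(\delta_j - \mu^*)$ lies in $\mathcal{P}(I)$ for $t \in [0,1]$, this contradicts the minimality of $\mu^*$.

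Next, set $\lambda = \eps\, \Capa(I)\, \mu^* \in \R^n$ (supported on $I$) and $h := K\lambda$. By the previous display, $h_i \ge \eps$ for every $i \in I$, and since $A \in \sigma(I)$ is increasing this yields the inclusion
\[ \{X + \eps \in A\} \;\subseteq\; \{X + h \in A\}, \]
so it suffices to bound $\P[X + h \in A] - \P[X \in A]$.

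For this I would invoke the finite-dimensional Cameron--Martin theorem,
\[ \P[X + h \in A] \;=\; \E\bigl[ \mathbf{1}_{\{X \in A\}}\, Z \bigr], \qquad Z \;=\; \exp\!\bigl( \langle \lambda, X\rangle - \tfrac{1}{2} \sigma^2 \bigr), \]
where $\sigma^2 = \lambda^T K \lambda = \eps^2\, \Capa(I)$ by the choice of $\lambda$ and the equality $\mu^{*T} K \mu^* = 1/\Capa(I)$. Therefore
\[ \P[X + \eps \in A] - \P[X \in A] \;\le\; \E[\mathbf{1}_A(Z - 1)] \;\le\; \E[(Z-1)_+]. \]
Writing $Z = e^{\sigma W - \sigma^2/2}$ for $W \sim N(0,1)$, a direct computation gives $\E[(Z-1)_+] = 2\Phi(\sigma/2) - 1 = \P(|W| \le \sigma/2) \le \sigma\,\varphi(0) = \sigma/\sqrt{2\pi} \le \sigma/2 = \eps \sqrt{\Capa(I)}/2$, which is the required bound.

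The main obstacle is the equilibrium-potential inequality $(K\mu^*)_i \ge 1/\Capa(I)$ for \emph{all} $i \in I$ (not merely on $\Supp \mu^*$), since this is exactly what guarantees that the Cameron--Martin shift $h = K\lambda$ pointwise dominates $\eps$ on $I$ and thus couples the increasing event $\{X + \eps \in A\}$ to the shifted event amenable to Cameron--Martin. The remaining ingredients are a routine Cameron--Martin computation and the standard density bound $\P(|W| \le t) \le t\,\varphi(0)$.
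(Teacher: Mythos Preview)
Your proof is correct and shares the paper's core strategy: produce a Cameron--Martin shift $h$ with $h|_I \ge \eps$ and $\|h\|_H = \eps\sqrt{\Capa(I)}$, use monotonicity to pass from $X+\eps$ to $X+h$, and then control $\P[X+h \in A] - \P[X \in A]$. The execution differs in two places. First, the paper simply quotes the dual representation $\Capa(I) = \inf\{\|h\|_H^2 : h|_I \ge 1\}$ to obtain such an $h$, whereas you construct it explicitly via the equilibrium measure and prove the potential lower bound $(K\mu^*)_i \ge 1/\Capa(I)$ by a first-variation argument. Second, for the probability difference the paper goes through total variation and Pinsker's inequality, $d_{TV}(X,X+h) \le \sqrt{\tfrac{1}{2}D_{KL}(X\|X+h)} = \|h\|_H/2$, while you compute $\E[(Z-1)_+] = 2\Phi(\sigma/2)-1$ directly. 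Your route is more self-contained (no Pinsker, no dual formula) and actually gives the sharper constant $1/\sqrt{2\pi}$ before you relaxed it to $1/2$; the paper's route is shorter once those classical tools are taken for granted.
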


\noindent Combining with Theorem \ref{t:sdi} yields:

\begin{corollary}
\label{c:cap}
 There exists a universal constant $c > 0$ such that, for all $I_1,I_2 \subseteq \{1,\ldots,n\}$, and increasing events $A_1 \in \sigma(I_1)$ and $A_2 \in \sigma(I_2)$,
\[ \Big| \P[A_1 \cap A_2] - \P[A_1]\P[A_2]  \Big| \le  c \Big( \sqrt{   \Capa(I_1) \Capa(I_2) }  \|K_{I_1,I_2}\|_\infty \Big)^{1/3}   . \]
\end{corollary}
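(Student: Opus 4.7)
The plan is to combine the \emph{inhomogeneous} sprinkled decoupling inequality from Remark \ref{r:inhom} with the capacity-based stability bound in Proposition \ref{p:cap}, and then to optimize jointly over two sprinkling parameters $\eps_1,\eps_2>0$. Using distinct sprinkling parameters on $I_1$ and $I_2$ is the key point: it is precisely what produces the geometric-mean factor $\sqrt{\Capa(I_1)\Capa(I_2)}$ appearing in the statement. With a single sprinkling parameter the optimization would only yield the weaker bound of order $\|K_{I_1,I_2}\|_\infty^{1/3}(\sqrt{\Capa(I_1)}+\sqrt{\Capa(I_2)})^{2/3}$, which by AM--GM can be strictly larger.

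For the upper bound on $\P[A_1\cap A_2]-\P[A_1]\P[A_2]$, I would first invoke Remark \ref{r:inhom} to obtain, for arbitrary $\eps_1,\eps_2>0$,
\[ \P[X \in A_1 \cap A_2] - \P[X+\eps_1 \in A_1]\P[X+\eps_2 \in A_2] \le \frac{c\|K_{I_1,I_2}\|_\infty}{\eps_1\eps_2}. \]
Then I would telescope the product difference,
\[ \P[X+\eps_1 \in A_1]\P[X+\eps_2 \in A_2] - \P[A_1]\P[A_2] = \big(\P[X+\eps_1 \in A_1]-\P[A_1]\big)\P[X+\eps_2\in A_2] + \P[A_1]\big(\P[X+\eps_2\in A_2]-\P[A_2]\big), \]
and apply Proposition \ref{p:cap} to each factor (using $\P[\cdot]\le 1$), producing the bound $\tfrac12\big(\eps_1\sqrt{\Capa(I_1)}+\eps_2\sqrt{\Capa(I_2)}\big)$. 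Adding the two estimates gives
\[ \P[A_1\cap A_2] - \P[A_1]\P[A_2] \le \frac{c\|K_{I_1,I_2}\|_\infty}{\eps_1\eps_2} + \frac{\eps_1\sqrt{\Capa(I_1)} + \eps_2\sqrt{\Capa(I_2)}}{2}. \]

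Setting both partial derivatives to zero yields the balance $\eps_1\sqrt{\Capa(I_1)} = \eps_2\sqrt{\Capa(I_2)}$ together with $\eps_1\eps_2 \cdot \eps_i\sqrt{\Capa(I_i)} \propto \|K_{I_1,I_2}\|_\infty$; at the minimizer the three summands coincide up to constants, so the optimized bound is of order $\big(\sqrt{\Capa(I_1)\Capa(I_2)}\,\|K_{I_1,I_2}\|_\infty\big)^{1/3}$, which is the desired estimate. For the lower bound on $\P[A_1\cap A_2]-\P[A_1]\P[A_2]$, I would repeat the same argument starting from \eqref{e:tsdi2} (in its inhomogeneous form, as indicated in the remark following Theorem \ref{t:sdi}), and apply Proposition \ref{p:cap} to compare $\P[X-\eps_i\in A_i]$ to $\P[A_i]$ --- note that this direction of stability follows from the statement of Proposition \ref{p:cap} applied to the shifted vector $X-\eps_i\id$, since shifts do not change the covariance and hence preserve $\Capa(I_i)$.

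There is no substantive obstacle beyond careful bookkeeping. The only conceptual step is recognising that one must use the inhomogeneous (two-parameter) form of Theorem \ref{t:sdi} to recover the geometric-mean capacity factor; once that is in place the remaining work is an elementary three-term optimization and a telescoping inequality.
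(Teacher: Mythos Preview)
Your proposal is correct and follows essentially the same route as the paper: combine the inhomogeneous form of Theorem \ref{t:sdi} with Proposition \ref{p:cap}, bound the product difference (the paper uses the inequality $cd-ab\le 2((c-a)+(d-b))$ for $a\le c$, $b\le d$ in $[0,1]$, which is just your telescoping up to a constant), then optimise over the two sprinkling parameters and repeat with \eqref{e:tsdi2} for the reverse inequality. Your observation that the two-parameter sprinkling is what delivers the geometric mean $\sqrt{\Capa(I_1)\Capa(I_2)}$ is exactly the point.
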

\begin{proof}
Note the trivial inequality $cd - ab \le 2( (c-a) + (d-b))$ for all $a,b,c,d \in [0,1]$ with $c \ge a$ and $d \ge b$. Combining with Proposition \ref{p:cap}, for all $\eps_1,\eps_2 > 0$,
\begin{align*}
&  \P[X \in A_1 \cap A_2] - \P[X \in A_1]\P[X \in A_2]  \\
 & \qquad \le  \P[X \in A_1 \cap A_2] - \P[X + \eps_1 \in A_1]\P[X + \eps_2 \in A_2]  +  \eps_1 \sqrt{\Capa(I_1) } +  \eps_2 \sqrt{\Capa(I_2) }  .
 \end{align*}
 Using the inhomogeneous form of equation \eqref{e:tsdi1} (see Remark \ref{r:inhom}), the above is at most
\[ c  \|K_{I_1,I_2}\|_{\infty} / (\eps_1 \eps_2) +  \eps_1 \sqrt{\Capa(I_1) }  +  \eps_2 \sqrt{\Capa(I_2) } . \]
Setting 
 \[ \eps_1 =  \frac{(c \sqrt{\Capa(I_2)}  \|K_{I_1,I_2}\|_\infty )^{1/3} }{\Capa(I_1)^{1/3} }  \quad \text{and} \quad \eps_2 =  \frac{(c \sqrt{\Capa(I_1)}  \|K_{I_1,I_2}\|_\infty )^{1/3} }{\Capa(I_2)^{1/3} } \]
  yields
 \[ \P[A_1 \cap A_2] - \P[A_1]\P[A_2]  \le  \Big( c \sqrt{   \Capa(I_1) \Capa(I_2) }  \|K_{I_1,I_2}\|_\infty \Big)^{1/3} . \]
The reverse inequality is proven similarly, using \eqref{e:tsdi2} in place of \eqref{e:tsdi1}.
\end{proof}

\begin{remark}
Proposition \ref{p:cap} and Corollary \ref{c:cap} also hold for continuous processes, with the obvious changes to notation. Note however that in the continuous setting $\Capa(D)$ may be infinite for compact $D \subset \R^d$ even if the process is non-degenerate (see Example~\ref{e:mrw2} below).
\end{remark}

\begin{example}[Gaussian free field]
Suppose $X$ is the GFF on $\Z^d$, $d \ge 3$, as in Example \ref{e:gff}. Then $\Capa(I)$ coincides with the usual harmonic capacity. In particular, fixing $k > 2$, if $I_1$ and $I_2$ are translations of the Euclidean ball $B(R)$ of radius $R \ge 1$ restricted to $\Z^d$, with centres at least $k R$ apart, then $\Capa(I_i) \sim c_d' R^{d-2}$, and by Corollary \ref{c:cap} we have
\[ \sup_{ \stackrel{A_1 \in \sigma(RI_1), A_2 \in \sigma(R I_2)}{A_i \text{ increasing} } } \Big| \P_{\ell}[A_1 \cap A_2] - \P_{\ell}[A_1]\P_{\ell}[A_2] \Big| \le  c_{d,k} \]
for some $c_{d,k} \to 0$ as $k \to \infty$.
\end{example}

\begin{example}[Short-range fields]
Suppose $f$ is a stationary continuous Gaussian field on $\R^d$ such that $K(0,\cdot)$ is absolutely integrable and $  \widehat{K}(0) :=  \int_{x \in \R^d} K(0,x) dx  > 0 $. Then the capacity has volume scaling (see \cite[Proposition 2.4]{ms22} for the $d=1$ case, and the general case is similar), i.e.\ for every smooth compact domain $D \subset \R^d$,
\[   \Capa(RD) \sim  \frac{\textrm{Vol}(D) R^d}{ \widehat{K}(0) }     , \quad R \to \infty .  \]
 Hence if also $\|K(0,x)  \|x\|_2^d \to 0$ as $\|x\|_2 \to \infty$, and if $D_1 , D_2 \subset \R^d$ are disjoint smooth compact domains, by Corollary \ref{c:cap} we have
 \[   \lim_{R \to \infty}  \sup_{ \stackrel{A_1 \in \sigma(RD_1), A_2 \in \sigma(R D_2)}{A_i \text{ increasing} } }  \Big| \P_{\ell}[A_1 \cap A_2] - \P_{\ell}[A_1]\P_{\ell}[A_2]  \Big| = 0  .\]
This property is sometimes known as `quasi-independence', see e.g.\ \cite{bg17,rv19,mv20}.
\end{example}

\begin{example}[Monochromatic random waves]
\label{e:mrw2}
Suppose $f$ is the monochromatic random wave from Example \ref{e:mrw}. Then there exists $r_0 = r_0(d) > 0$ such that, for every $D \subset \R^d$ which contains a translation of the ball $B(r_0)$, $\Capa(D) = \infty$.
\end{example}

To the best of our knowledge Corollary \ref{c:cap} is new, but in the case of the GFF a stronger version is known. Recall the maximum correlation coefficient $\rho(I_1,I_2)$, which satisfies
\begin{equation}
\label{e:alpharho}
  \sup_{ A_1 \in \sigma(I_1), A_2 \in \sigma(I_2) }    \big| \P[A_1 \cap A_2] - \P[A_1]\P[A_2]  \big| \le \rho(I_1,I_2)  .    
  \end{equation} 

\begin{proposition}[{\cite[Proposition 1.1]{pr15}}]
\label{p:cor}
Suppose $f$ is the GFF on $\Z^d$, $d \ge 3$. Then for all $I_1,I_2 \subseteq \{1,\ldots,n\}$, 
\begin{equation}
\label{e:cap2}
 \rho(I_1,I_2) \le    \sqrt{   \Capa(I_1) \Capa(I_2) }  \|K_{I_1,I_2}\|_\infty   . 
 \end{equation}
  \end{proposition}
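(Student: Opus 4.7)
The plan is to combine the linearised form of $\rho(I_1,I_2)$ from \eqref{e:rholin} with the probabilistic interpretation of the GFF covariance as the Green's function of simple random walk. Maximising over $\beta$ in the linearisation (a standard Cauchy-Schwarz step) recasts the definition as
\[ \rho(I_1,I_2)^2 \;=\; \sup_{\alpha \in \R^{I_1} \setminus \{0\}} \frac{\alpha^T N \alpha}{\alpha^T K_{I_1,I_1}\alpha}, \qquad N := K_{I_1,I_2}\, K_{I_2,I_2}^{-1}\, K_{I_2,I_1}, \]
which is the top generalised eigenvalue of the pencil $(N, K_{I_1,I_1})$, or equivalently the spectral radius of $A := K_{I_1,I_1}^{-1} N$.

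The first key step is a Perron-Frobenius reduction to non-negative $\alpha$. By the Markov property of simple random walk, the matrix $K_{I_2,I_2}^{-1} K_{I_2,I_1}$ identifies with the hitting-distribution matrix whose $(z,i)$-entry is $H^i_{I_2}(z) := \P^i[\tau_{I_2} < \infty,\; X_{\tau_{I_2}} = z]$, and an analogous identity holds for $K_{I_1,I_1}^{-1} K_{I_1,I_2}$. Hence $A$ factorises as the product of two entrywise-non-negative matrices and is itself entrywise non-negative, so the Perron-Frobenius theorem supplies a non-negative top eigenvector $\alpha^* \ge 0$. The Rayleigh supremum above is therefore attained at some $\alpha \ge 0$.

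The second key step combines this non-negativity with capacity estimates. For $\alpha \ge 0$, set $\tilde\alpha := K_{I_2,I_2}^{-1} K_{I_2,I_1} \alpha \ge 0$, so $\alpha^T N \alpha = \tilde\alpha^T K_{I_2,I_2} \tilde\alpha$ and $K_{I_2,I_2}\tilde\alpha = K_{I_2,I_1} \alpha$. Writing $M := \|K_{I_1,I_2}\|_\infty$,
\[ \alpha^T N \alpha \;=\; \tilde\alpha^T K_{I_2,I_1} \alpha \;=\; \sum_{z \in I_2,\, i \in I_1} \tilde\alpha_z\, K(z,i)\, \alpha_i \;\le\; M\, \|\tilde\alpha\|_1\, \|\alpha\|_1. \]
The variational definition \eqref{e:cap} of $\Capa$, applied to $\tilde\alpha/\|\tilde\alpha\|_1 \in \mathcal{P}(I_2)$, supplies the dual bound $\|\tilde\alpha\|_1^2 \le \Capa(I_2)\, \tilde\alpha^T K_{I_2,I_2}\tilde\alpha = \Capa(I_2)\, \alpha^T N \alpha$; feeding this back into the display yields $\alpha^T N \alpha \le M^2\, \Capa(I_2)\, \|\alpha\|_1^2$. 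Applying the same capacity bound to $\alpha$ on $I_1$ gives $\alpha^T K_{I_1,I_1}\alpha \ge \|\alpha\|_1^2/\Capa(I_1)$, and dividing yields $\rho^2 \le M^2\, \Capa(I_1)\, \Capa(I_2)$.

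The main obstacle is the reduction to non-negative $\alpha$: for signed $\alpha$, ``dipole''-type cancellation can make $\alpha^T K_{I_1,I_1}\alpha$ arbitrarily small relative to $\|\alpha\|_1^2/\Capa(I_1)$, which would destroy the final capacity step. This is rescued by the Perron-Frobenius property of $A$, which depends crucially on its factorisation into hitting matrices; this ``harmonic'' structure is specific to the GFF and is the reason the proposition is stated only for this model rather than for all positively correlated Gaussian vectors.
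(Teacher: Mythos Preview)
The paper does not give its own proof of this proposition; it is quoted directly from \cite[Proposition~1.1]{pr15}, so there is nothing in the present paper to compare against. Your argument is correct and self-contained: the Cauchy--Schwarz reduction to the generalised Rayleigh quotient $\sup_{\alpha}\alpha^TN\alpha/\alpha^TK_{I_1,I_1}\alpha$ is standard, the identification of $K_{I_2,I_2}^{-1}K_{I_2,I_1}$ and $K_{I_1,I_1}^{-1}K_{I_1,I_2}$ with hitting-distribution matrices via the strong Markov property is the decisive GFF-specific step, and the Perron--Frobenius / capacity manipulations that follow are all valid. Two small points worth making explicit in a write-up: the existence of a non-negative eigenvector for the spectral radius holds for \emph{any} entrywise non-negative matrix (irreducibility is not needed, only that $\alpha^*\ge 0$, not $\alpha^*>0$), and the degenerate case $\tilde\alpha=0$ should be dispatched separately (it gives $\alpha^TN\alpha=0$ and the bound is trivial).
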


\begin{remark}
 Although Proposition \ref{p:cor} and \eqref{e:alpharho} imply a stronger decoupling bound than Corollary \ref{c:cap} (and applying to all events, not only increasing), this bound is non-trivial in the same regime $\sqrt{   \Capa(I_1) \Capa(I_2) } \ll  \|K_{I_1,I_2}\|_\infty$ as the fully general Corollary \ref{c:cap}.
\end{remark}

\begin{remark}
By setting $\alpha$ and $\beta$ in \eqref{e:rholin} to be respectively the measures that achieve the infimum in the definition \eqref{e:cap} of $\Capa(I_1)$ and $\Capa(I_2)$, it is easy to see that, in general,
\begin{equation}
\label{e:caplower}
\rho(I_1,I_2) \ge   \sqrt{   \Capa(I_1) \Capa(I_2) } \min_{i \in I_1, j \in I_2} K(i,j) .   
\end{equation}
\end{remark}

\begin{question}
Does \eqref{e:cap2} hold in general? What about if $K \ge 0$?
\end{question}

\subsection{Stability for topological events}
We next restrict to the case of smooth Gaussian fields $f$ on $\R^d$ and events $A$ which depend only on the \textit{topology} of the excursion sets $\{f \ge u\}$; following \cite{bmr20} we call these `topological events'. For such events, the stability of $\P_{\ell}[A]$ is induced by the absence of critical points which have critical level $\approx u$.

\smallskip
To make this precise, let us introduce some notation. A \textit{box} $B \subset \R^d$ is a compact domain of the form $[a_i ,b_i] \times \ldots \times [a_d, b_d]$ for finite $a_i < b_i$. We consider a box to be equipped with its canonical \textit{stratification}, i.e.\ the partition of $B$ into the collection $\mathcal{S} = (S_i)_i$ of the interiors of each of its faces of dimension $n \in \{0, \ldots, d\} $, which we refer to as \textit{strata}. Each strata of dimension $> 0$ is equipped its Lebesgue measure, and each zeroth-dimensional strata equipped with the counting measure. Define $\overline{\textrm{Vol}}(B) = \sum_{S_i \in \mathcal{S}} \textrm{Vol}(S_i)$.

\smallskip
We assume that $f$ is $C^2$-smooth and that $(f(x), \nabla f(x))$ is non-degenerate for every $x \in \R^d$. Then by Bulinskaya's lemma \cite[Lemma 11.2.10]{at07}, for fixed $u \in \R$ and a fixed box $B \subset \R^d$, almost surely the level set $\{f = u\}$ consists of smooth simple curves which intersects the boundary of $B$ transversally. A \textit{topological event} is an event that depends only on the stratified diffeomorphism class of $\{f \ge u\}|_{B}$ for some $u \in \R$. Examples are (i) the `crossing event' that $\{f  \ge u\}|_{B}$ contains a path that intersects two opposite $(d-1)$-dimensional faces, and (ii) the event that the number of connected components of $\{f  \ge u\}|_{B}$ exceeds a given threshold.

\begin{proposition}
\label{p:top}
Suppose there exists a $\delta > 0$ such that
\begin{equation}
\label{e:nondegen}
  \inf_{x \in \R^d}  \textrm{DetCov}[ (f(x), \nabla f(x) ) ]  > \delta   \quad \text{and} \quad  \sup_{x \in \R^d} \max \Big\{ \|  \E[  v(x) ]  \|_\infty, \| \Cov[ v(x) ] \|_{\infty} \Big\} < 1/\delta  ,
  \end{equation}
where $v(x)$ denotes the vector $(f(x), \nabla f(x), \nabla^2 f(x) ) \in \R \times \R^d \times \R^{d(d+1)/2}$. Then there exists a constant $c > 0$ depending only on $\delta$ and the dimension $d$ such that, for every box $B  \subset \R^d$, topological event $A \in \sigma(B)$, and $\eps > 0$,
\[       \P[f + \eps \in A] - \P[f \in A] \le c \eps  \overline{\textrm{Vol}}(B)  .\]
\end{proposition}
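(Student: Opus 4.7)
The plan is to show that the symmetric difference $\{f \in A\} \triangle \{f + \eps \in A\}$ is contained, up to a null set, in the event that the stratified diffeomorphism type of $\{f \ge v\}|_B$ changes as $v$ moves through an interval of length $\eps$, and then to control the probability of this event via the Kac--Rice formula applied stratum by stratum. Since $A$ is determined by the stratified diffeomorphism class of $\{f \ge u\}|_B$ for some fixed $u \in \R$, and since $\{f + \eps \ge u\}|_B = \{f \ge u - \eps\}|_B$, one has $\P[f + \eps \in A] - \P[f \in A] \le \P[E_\eps]$, where $E_\eps$ denotes the event that $v \mapsto [\{f \ge v\}|_B]$ (read as a stratified diffeomorphism class) is not constant on $v \in [u-\eps, u]$.

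Next I would invoke stratified Morse theory together with Bulinskaya-type transversality arguments to show that, almost surely, the class $[\{f \ge v\}|_B]$ changes only when $v$ passes a critical value of the restriction $f|_S$ for some stratum $S \in \mathcal{S}$. Thus, up to a null set, $E_\eps$ is contained in the union over $S \in \mathcal{S}$ of the events $\{\,\exists\, x \in S : \nabla_S f(x) = 0,\ f(x) \in [u-\eps, u]\,\}$, where for a zero-dimensional stratum $\{p\}$ the condition reduces to $\{f(p) \in [u-\eps, u]\}$. Markov's inequality and a union bound then give
\[ \P[E_\eps] \le \sum_{S \in \mathcal{S}} \E\bigl[\, \#\{x \in S : \nabla_S f(x) = 0,\ f(x) \in [u-\eps, u]\} \,\bigr]. \]

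For each stratum $S$ of dimension $k \ge 1$, I would apply the Kac--Rice formula \cite[Theorem 11.2.1]{at07} to $f|_S$, which rewrites the summand as
\[ \int_S \int_{u-\eps}^u \E\bigl[\, |\det \nabla_S^2 f(x)| \,\big|\, f(x) = v,\ \nabla_S f(x) = 0 \,\bigr]\, p_{f(x),\nabla_S f(x)}(v, 0)\, dv\, dx. \]
Because $(f(x), \nabla_S f(x))$ is a linear marginal of $(f(x), \nabla f(x))$ and $\nabla_S^2 f(x)$ is a sub-matrix of $\nabla^2 f(x)$, the hypotheses in \eqref{e:nondegen} translate (by taking marginals and using standard Gaussian conditioning) into uniform upper bounds on the joint density $p_{f(x), \nabla_S f(x)}$ and on the conditional expectation of $|\det \nabla_S^2 f(x)|$, with constants depending only on $\delta$ and $d$. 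This yields a contribution of order $\eps\, \textrm{Vol}(S)$ from each such $S$. For a zero-dimensional stratum $\{p\}$, the uniform upper bound on the density of $f(p)$ gives $\P[f(p) \in [u-\eps, u]] \le c\eps$ directly. Summing over $\mathcal{S}$ produces $\P[E_\eps] \le c \eps\, \overline{\textrm{Vol}}(B)$, which is the claimed estimate.

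The main obstacle I anticipate is the careful execution of the stratified Morse reduction in the random $C^2$ setting: one must verify that, almost surely, for every stratum $S$ simultaneously, all but a locally finite set of $v$'s are regular values of $f|_S$, and that the stratified diffeomorphism class of $\{f \ge v\}|_B$ is locally constant between consecutive critical values. Both ingredients should follow from \eqref{e:nondegen} via Bulinskaya's lemma applied to each $(f|_S, \nabla_S f|_S)$ (whose non-degeneracy is inherited from that of $(f, \nabla f)$); once these structural facts are in place, the Kac--Rice bookkeeping across strata is routine.
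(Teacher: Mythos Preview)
Your proposal is correct and follows essentially the same approach as the paper: the paper likewise reduces to the event that there is a stratified critical point of $f$ in $B$ with critical value in $[u-\eps,u]$ (via a stratified Morse lemma), bounds its probability by the expected number of such points, and then estimates this expectation stratum-by-stratum with the Kac--Rice formula, treating vertices separately and using the hypotheses \eqref{e:nondegen} to control the density and conditional determinant uniformly. The only organizational difference is that the paper packages the Morse step and the Kac--Rice estimate as two preliminary lemmas rather than inlining them.
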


\begin{corollary}
\label{c:top}
Suppose there exists a $\delta > 0$ such that \eqref{e:nondegen} is satisfied. Then there exists a constant $c > 0$ depending only on $\delta$ and the dimension $d$ such that, for disjoint boxes $B_1,B_2 \subset \R^d$, and increasing topological events $A_1 \in \sigma(B_1)$ and $A_2 \in \sigma(B_2)$, 
\[ \Big| \P[A_1 \cap A_2] - \P[A_1]\P[A_2]  \Big| \le  c  \Big(  \overline{\textrm{Vol}}(B_1)  \overline{\textrm{Vol}}(B_2)    \|K_{B_1,B_2}\|_\infty \Big)^{1/3}   . \]
\end{corollary}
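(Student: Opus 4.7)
The plan is to mirror the proof of Corollary \ref{c:cap} essentially verbatim, replacing the capacity-based stability estimate of Proposition \ref{p:cap} with the topological stability estimate of Proposition \ref{p:top}, and using the continuous sprinkled decoupling inequalities \eqref{e:tcsdi1} and \eqref{e:tcsdi2}.

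Concretely, I start from the elementary bound $cd - ab \le (c-a) + (d-b)$ valid for $a,b,c,d \in [0,1]$ with $a \le c$ and $b \le d$ (immediate from $cd - ab = c(d-b) + b(c-a)$). Combined with Proposition \ref{p:top} applied to each factor, this yields, for any $\eps_1, \eps_2 > 0$,
\[
\P[f + \eps_1 \in A_1]\P[f + \eps_2 \in A_2] - \P[f \in A_1]\P[f \in A_2] \le c_1 \bigl(\eps_1 \overline{\textrm{Vol}}(B_1) + \eps_2 \overline{\textrm{Vol}}(B_2)\bigr).
\]
Adding this to the inhomogeneous form of \eqref{e:tcsdi1} (which follows from \eqref{e:tcsdi1} by the same rescaling argument used in Remark \ref{r:inhom}, and needs no reduction step since $B_1, B_2$ are already disjoint) gives
\[
\P[f \in A_1 \cap A_2] - \P[f \in A_1]\P[f \in A_2] \le \frac{c\|K_{B_1, B_2}\|_\infty}{\eps_1 \eps_2} + c_1 \bigl(\eps_1 \overline{\textrm{Vol}}(B_1) + \eps_2 \overline{\textrm{Vol}}(B_2)\bigr).
\]
Balancing the three terms by choosing
\[
\eps_1 = \Bigl(\frac{\|K_{B_1,B_2}\|_\infty \overline{\textrm{Vol}}(B_2)}{\overline{\textrm{Vol}}(B_1)^2}\Bigr)^{1/3}, \qquad \eps_2 = \Bigl(\frac{\|K_{B_1,B_2}\|_\infty \overline{\textrm{Vol}}(B_1)}{\overline{\textrm{Vol}}(B_2)^2}\Bigr)^{1/3}
\]
produces the upper bound $c\bigl(\overline{\textrm{Vol}}(B_1)\overline{\textrm{Vol}}(B_2)\|K_{B_1,B_2}\|_\infty\bigr)^{1/3}$ on the positive side. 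The matching upper bound on $\P[A_1]\P[A_2] - \P[A_1 \cap A_2]$ is obtained symmetrically by using \eqref{e:tcsdi2} in place of \eqref{e:tcsdi1} and shifting $f$ downwards by $\eps_1,\eps_2$ rather than upwards.

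I do not expect any genuine obstacle: the scheme is mechanical once Proposition \ref{p:top} and the continuous sprinkled decoupling inequalities are in hand. The only technical checks are (i) that the inhomogeneous extension of Remark \ref{r:inhom} carries over verbatim to the continuous setting (it does, by the same scaling), and (ii) that when the optimal $\eps_i$ would be so large that the bound of Proposition \ref{p:top} becomes trivial, the right-hand side of the claimed inequality is itself $\ge 1$ up to a universal constant, so the conclusion follows from $|\P[A_1 \cap A_2] - \P[A_1]\P[A_2]| \le 1$.
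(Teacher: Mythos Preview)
Your proposal is correct and follows essentially the same route as the paper: the paper's proof of Corollary~\ref{c:top} is simply ``identical to the proof of Corollary~\ref{c:cap}, using Proposition~\ref{p:top} in place of Proposition~\ref{p:cap}'', which is exactly what you outline (inhomogeneous sprinkled decoupling plus the stability estimate, then balance the $\eps_i$). Your elementary inequality $cd-ab \le (c-a)+(d-b)$ is in fact slightly sharper than the version with the factor~$2$ used in the paper's proof of Corollary~\ref{c:cap}, but this only affects the constant.
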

\begin{proof}
This is identical to the proof of Corollary \ref{c:cap}, using Proposition \ref{p:top} in place of Proposition \ref{p:cap}.
\end{proof}

\begin{remark}
Unlike Corollary \ref{c:cap}, Corollary \ref{c:top} continues to be effective even if the capacity is infinite, e.g.\ the monochromatic random waves in Example \ref{e:mrw}.
\end{remark}

\begin{remark}
The restriction to box domains is mainly for simplicity; in general the constants in Proposition \ref{p:top} and Corollary \ref{c:top} also depend on the maximum curvature of the boundary.
\end{remark}

\begin{example}
Suppose $f$ is a stationary $C^2$-smooth Gaussian field on $\R^d$ with $(f(0),\nabla f(0))$ non-degenerate and covariance satisfying $K(0,x) \|x\|_2^{2d} \to 0$ as $\|x\|_2 \to \infty$.  Then if $B_1 , B_2 \subset \R^d$ are disjoint boxes, by Corollary \ref{c:top} we have the quasi-independence estimate
 \[   \lim_{R \to \infty}  \sup_{ \stackrel{A_1 \in \sigma(RB_1), A_2 \in \sigma(R B_2)}{A_i \text{ increasing and topological} } }  \Big| \P[A_1 \cap A_2] - \P[A_1]\P[A_2]  \Big| = 0  .\]
\end{example}

In \cite{bmr20} an exact formula was derived for the covariance of topological events, not necessarily increasing, which implies a version of Corollary \ref{c:top} under slightly more restrictive assumptions; see \cite[Corollary 1.2]{bmr20}. In fact the conclusion of \cite[Corollary 1.2]{bmr20} is stronger and applies to all topological events, but its proof is more involved.

\subsection{Proof of the stability estimates}
\label{s:stabproof}

We finish the section with the proof of the stability estimates in Propositions \ref{p:cap} and \ref{p:top}.

\subsubsection{Proof of Proposition \ref{p:cap}}

Let $X$ and $Y$ be random variables on a common measurable space $\mathcal{X}$ with respective laws $\P$ and $\mathbb{Q}$. The \textit{relative entropy}, or \textit{Kullback-Leibler divergence}, from $Y$ to $X$ is defined as
\[ D_{KL}(X \| Y) = \int_\mathcal{X} \log \Big( \frac{\P(dx) }{\mathbb{Q}(dx)} \Big) \P(dx) . \]
The \textit{total variation distance} between $X$ and $Y$ is defined as
\[ d_{TV}(X, Y) = \sup_{\text{event } A} |\P(A) - \mathbb{Q}(A)|  . \]
These are related by Pinsker's inequality
\[ d_{TV}(X, Y)  \le \sqrt{ \frac{1}{2} D_{KL}(X \| Y)} . \]

Let $X$ be a Gaussian vector with covariance $K(i,j)$. The \textit{reproducing kernel Hilbert space (RKHS)} $H$ of $X$ is defined as the linear span of $(K(i, \cdot))_i$ equipped with the inner product
\[  \Big\langle  \sum a_i K(i, \cdot) ,  \sum b_i K(j, \cdot) \Big\rangle_H = \sum a_i b_i K(i, j ) . \]
A consequence of the Cameron-Martin formula is that, for every $h \in H$,
\begin{equation}
\label{e:cm}
 D_{KL}(X \| X+h ) = \frac{\|h\|_H^2 }{2}  .
 \end{equation}
In the setting of continuous Gaussian fields on $D \subseteq \R^d$ the RKHS is defined as the \text{closure} of the linear span of $(K(x_i, \cdot))_i$ under the same inner product, and \eqref{e:cm} remains true.

\begin{proof}[Proof of Proposition \ref{p:cap}]
Let $h \in H$ be such that $h|_I \ge 1$. Then since $A$ is increasing
\begin{align*}
     \P[X + \eps \in A] - \P[X \in A]  & \le \P[X + \eps h \in A] - \P[A]  \le d_{TV}(X,X + \eps h) \\
     & \le \sqrt{ \frac{1}{2} D_{KL}( X \| X + \eps h) } =  \frac{\eps \|h\|_H}{2} .   
     \end{align*}
This proves the inequality, since by the dual representation of the capacity
\begin{equation*}
\Capa(I) = \inf\{ \|h\|_H^2  :  h|_I \ge 1 \}  . \qedhere
\end{equation*}
\end{proof}

\subsubsection{Proof of Proposition \ref{p:top}}

Recall that $B \subset \R^d$ is a box equipped with its canonical stratification $\mathcal{S} = (S_i)_i$, and recall that $\overline{\textrm{Vol}}(B) = \sum_{S_i \in \mathcal{S}} \textrm{Vol}(S_i)$. Let $\textrm{vert}(B)$ denote the set of~$2^d$ vertices of~$B$.  For a point $x \in B \setminus \textrm{vert}(B)$, let $S(x)$ denote the stratum of $B$ that contains $x$, and let $\nabla|_{S(x)}$ denote the gradient operator restricted to $S(x)$. 

\smallskip
Let $U$ be an open neighbourhood of $B$, and let $g \in C^2(U)$. A point $x \in B$ is a \textit{stratified critical point} of $g$ if either $x \in \textrm{vert}(B)$ or if $x \in B \setminus \textrm{vert}(B)$ and $\nabla|_{B(x)} f(x) = 0$; its \textit{critical value} is $g(x)$. For levels $u < v$, let $N_B(g;u,v)$ denote the number of stratified critical points in $B$ with critical value in $[u,v]$.

\smallskip
The following is a basic lemma of stratified Morse theory (see \cite[Theorem 7]{han02}):

\begin{lemma}
\label{l:morse}
If $N_B(g; u,v) = 0$ then $\{g \ge u\}|_B$ and $\{g \ge v\}|_B$ are in the same stratified diffeomorphism class.
\end{lemma}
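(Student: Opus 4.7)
\medskip
\noindent\textbf{Proof proposal.} The plan is to follow the classical Morse-theoretic argument adapted to the stratified setting as in \cite{han02}. I would construct a stratified gradient-like vector field $V$ on the slab
\[ B_{u,v} := \{x \in B : u \le g(x) \le v\}, \]
tangent to every stratum of $B$ and satisfying $dg(V) \equiv -1$; integrating $V$ for time $v-u$ then produces the desired stratified diffeomorphism from $\{g \ge v\}|_B$ to $\{g \ge u\}|_B$.

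First I would build $V$ locally on each stratum. On any positive-dimensional stratum $S$, the hypothesis $N_B(g;u,v)=0$ guarantees that $\nabla|_S g$ is nowhere zero on $S \cap B_{u,v}$, so
\[ V_S(x) := -\frac{\nabla|_S g(x)}{\|\nabla|_S g(x)\|^2} \]
is a smooth vector field on $S \cap B_{u,v}$, tangent to $S$, and satisfying $dg(V_S) \equiv -1$ (the last since only the $S$-tangential component of $\nabla g$ contributes to the pairing with $V_S$). The zero-dimensional strata of $B_{u,v}$ are empty because every vertex of $B$ is by definition a stratified critical point, and such points are excluded by $N_B(g;u,v)=0$.

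Next I would patch the $V_S$ into a single globally defined, stratification-preserving vector field $V$ on $B_{u,v}$. For the canonical product stratification of a box this step is particularly transparent: every stratum is a product of open intervals and singletons, and one can combine the $V_S$ using a stratified partition of unity built from coordinatewise distances to the faces. More abstractly, one appeals to Thom--Mather control data and Thom's First Isotopy Lemma. The two properties to arrange are (i) $V$ is tangent to every stratum, so its flow $\phi_t$ preserves the stratification of $B$, and (ii) $dg(V) \equiv -1$, so $g$ decreases at unit speed along trajectories. Extending $V$ by zero to the complement of $B_{u,v}$ via a smooth cut-off in $g$ produces a complete flow $\phi_t$ on $B$, and then $\phi_{v-u}$ restricts to a stratified diffeomorphism $\{g \ge v\}|_B \to \{g \ge u\}|_B$, proving the lemma.

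The hard part is the patching: one must verify that the combined $V$ is locally Lipschitz and that its flow genuinely respects the stratification, i.e.\ that a trajectory originating in an open stratum cannot `fall' onto a lower-dimensional stratum in finite time. This is classical for the product stratification of a box but tedious to spell out from scratch, which is why one would ultimately just invoke \cite[Theorem 7]{han02}.
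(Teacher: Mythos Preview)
Your proposal is correct and aligns with the paper: the paper does not give its own proof of this lemma at all, but simply cites \cite[Theorem 7]{han02}, which is exactly the reference you end up invoking after sketching the standard stratified-Morse-theory flow argument. Your outline is a faithful summary of what lies behind that citation, so there is nothing to compare.
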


To estimate the probability that $N_B(f; u,v) = 0$ we bound its expectation:

\begin{lemma}
\label{l:kr}
Suppose there exists a $\delta > 0$ such that \eqref{e:nondegen} holds. Then there exists $c > 0$ depending only on $\delta$ and the dimension $d$ such that, for every box $B  \subset \R^d$ and levels $u < v$, 
\[ \E[ N_B(f; u, v) ]  \le  c  (v-u)  \overline{\textrm{Vol}}(B) . \]
\end{lemma}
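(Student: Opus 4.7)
The plan is to apply the Kac-Rice formula stratum by stratum. Decompose
\[ N_B(f;u,v) = \sum_{S \in \mathcal{S}} N_S(f;u,v), \]
where $N_S(f;u,v)$ counts stratified critical points lying in the stratum $S$ with critical value in $[u,v]$. For the $2^d$ zero-dimensional strata (the vertices), the count is simply $\ind_{f(x)\in[u,v]}$ at each vertex $x$, so that $\E[N_{\{x\}}(f;u,v)] \le (v-u)\sup_y \|p_{f(y)}\|_\infty$, and the latter sup is bounded in terms of $\delta$ alone using the first and second moment bounds in \eqref{e:nondegen}. This contributes an $O(v-u)$ term that is absorbed into $c(v-u)\overline{\Vol}(B)$.

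For a stratum $S$ of dimension $k \ge 1$, applying the Kac-Rice formula (\cite[Theorem 11.2.1]{at07}) to the Gaussian field $\nabla|_S f$ on $S$ yields
\[ \E[N_S(f;u,v)] = \int_S \E\Big[|\det \nabla^2|_S f(x)|\, \ind_{f(x) \in [u,v]}\,\Big|\, \nabla|_S f(x)=0\Big]\, p_{\nabla|_S f(x)}(0)\, dx. \]
To justify this one needs $(f(x),\nabla|_S f(x))$ to be non-degenerate at every $x\in S$, which follows from the non-degeneracy of $(f(x),\nabla f(x))$ in \eqref{e:nondegen}. I would bound the integrand pointwise: first, $p_{\nabla|_S f(x)}(0)$ is controlled by $\delta$ using the determinant lower bound and covariance upper bound in \eqref{e:nondegen}; second, by Gaussian regression, conditionally on $\nabla|_S f(x)=0$, the variable $f(x)$ remains Gaussian with bounded conditional variance and bounded conditional mean, so
\[ \P[f(x)\in[u,v]\mid\nabla|_S f(x)=0] \le c_\delta(v-u); \]
third, the conditional expectation of $|\det \nabla^2|_S f(x)|$ (further conditioned on $f(x)$) is bounded uniformly in $x$ via a standard Cauchy-Schwarz/Hadamard estimate using the covariance bound on the entries of $\nabla^2 f(x)$ in \eqref{e:nondegen}. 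Combining these three pointwise bounds and integrating gives a contribution of order $(v-u)\Vol(S)$, and summing over strata gives the desired bound.

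The main obstacle is not conceptual but rather the bookkeeping needed to verify, uniformly in $x$ and across all strata, that the conditional mean and (co)variance of $(f(x),\nabla^2|_S f(x))$ given $\nabla|_S f(x)=0$ remain controlled solely by the parameter $\delta$ and the dimension $d$. This reduces to the linear algebra of Schur complements applied to the covariance of $(f(x),\nabla f(x),\nabla^2 f(x))$, whose entries are bounded by $1/\delta$ and whose leading $(d+1)\times(d+1)$ block has determinant at least $\delta$; these two facts together yield a uniform constant for the conditional moments, which is exactly what is needed to make the Kac-Rice integrand $O(v-u)$ pointwise.
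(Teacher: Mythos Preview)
Your overall strategy---stratum-by-stratum Kac--Rice, with the $(v-u)$ factor coming from the level constraint---is exactly the paper's. The treatment of vertices and the justification of Kac--Rice are fine. However, your three-step pointwise bound for the higher-dimensional strata contains a genuine gap in the third step.

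You claim that $\E\big[|\det \nabla^2|_S f(x)|\,\big|\,\nabla|_S f(x)=0,\,f(x)\big]$ is bounded uniformly in $x$. But this conditional expectation depends on the \emph{value} of $f(x)$: by Gaussian regression, the conditional mean of the Hessian entries given $(f(x),\nabla|_S f(x))=(\ell,0)$ is affine in $\ell$, so the conditional expectation of the determinant grows like $|\ell|^k$ as $|\ell|\to\infty$. Since $u$ and $v$ are arbitrary, you cannot simply multiply a uniform bound on this factor by the bound $\P[f(x)\in[u,v]\mid\nabla|_S f(x)=0]\le c_\delta(v-u)$ from your second step. (A Cauchy--Schwarz separation of the determinant and the indicator would only yield $\sqrt{v-u}$, which is not enough.)

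The fix, and what the paper does, is to refrain from splitting the factors: one bounds the \emph{product}
\[
\varphi_{(f(x),\nabla|_S f(x))}(\ell,0)\;\E\big[|\det \nabla^2|_S f(x)|\,\big|\,(f(x),\nabla|_S f(x))=(\ell,0)\big]
\]
uniformly in $(x,\ell)$, using that the Gaussian decay of the density in $\ell$ beats the polynomial growth of the conditional moment. Integrating this uniform bound over $\ell\in[u,v]$ then gives the factor $(v-u)$. The paper packages this product bound as a separate lemma (Lemma~\ref{l:gv}), which carries out precisely the Schur-complement bookkeeping you allude to in your final paragraph; your sketch of that bookkeeping is correct, it just needs to be applied to the product rather than to the third factor in isolation.
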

 
\begin{proof}
Let $S \in \mathcal{S}$ be a stratum, and let $N_S$ denote the number of stratified critical points in $S$ with critical level in $[u,v]$. There are two cases:
 \begin{enumerate}
 \item $S$ is a vertex $\{x\}$. Then
\[ \E[ N_S ] = \P[ f(x) \in [u, v] ] =  \int_u^v \frac{1}{\sqrt{2d \sigma^2}}  e^{ -(\ell - \E[f(x)])^2 / (2 \sigma^2) } \, d\ell \le \frac{v-u}{\sqrt{2d \sigma^2}}  ,  \]
where $\sigma^2 = \Var[f(x)]$. 

\item $S$ is not a vertex. For $x \in S$, abbreviate $v^x = (f(x), \nabla|_{S} f(x) )$ and $N^x = (N_{i,j})_{i,j} =\nabla^2|_{S}f(x) $, and let $n = \textrm{dim}(S)$. Then by the Kac-Rice formula \cite[Corollary 11.2.2]{at07}
\begin{align*}
      \E[N_S] &=  \int_{x \in S, \ell \in [u,v]   } \varphi(\ell, 0) \E \big[ | \textrm{Det}(  N^x)  | \, \big| \, v^x = (\ell,  0 )  \big] \, dx d\ell  \\
      &  \le     (v-u) \textrm{Vol}(S) \sup_{x \in S, s \in \R^{d+1} }  \varphi(s)      \E \big[ | \textrm{Det}(N^x )  | \, \big| \,  v^x  = s  \big]   \\
         & \le   d^d  (v-u) \textrm{Vol}(S) \sup_{x \in S, s \in \R^{d+1}   }  \varphi(s)  \max_{i,j}   \E \big[ | N^x_{i,j} |^n   | \, \big| \,  v^x  = s  \big]   ,
      \end{align*}
      where $\varphi(s)$ is the density of $v^x$ at $s \in \R^{d+1}$, and in the final step we expanded the determinant and applied H\"{o}lder's inequality. 
\end{enumerate}
Applying Lemma \ref{l:gv} below, in both cases we have $\E[N_S] \le c (v-u) \textrm{Vol}(S)$ for a $c>0$ depending only on $\delta$ and $d$, which gives the result.
\end{proof}

\begin{proof}[Proof of Proposition \ref{p:top}]
Let $A$ be a topological event that depends on $\{f \ge u\}$. By Lemmas \ref{l:morse} and \ref{l:kr} we have 
\begin{equation*}
    \P[f + \eps \in A] - \P[f \in A] \le \P[ N_B(f; u-\eps, u)  \ge 1 ]  \le \E[ N_B(f; u-\eps, u) ]  \le c \eps \overline{\textrm{Vol}}(B)   . \qedhere
\end{equation*}
\end{proof}

In the proof of Lemma \ref{l:kr} we used the following property of Gaussian vectors:

\begin{lemma}
\label{l:gv}
Let $(X,Y)$ be an $(1 \times m)$-dimensional Gaussian vector and let $n \in \N$. Suppose there exists $\delta > 0$ such that
\[    \textrm{DetCov}[Y]   > \delta \quad \text{and} \quad  \max \Big\{  \| \E[(X,Y)] \|_\infty, \| \textrm{Cov}[ (X,Y) ] \|_\infty \Big\} < 1/\delta ,  \]
 and let $\varphi(s)$ denote the density of $Y$ at $s \in \R^m$. Then there exists a constant $\delta' > 0$ depending only on $m,n$ and $\delta$, such that,
\[ \min_{ I \subseteq \{1,\ldots,m\} } \textrm{Det} \textrm{Cov}[Y|_I]  > \delta'  \quad \text{and} \quad \sup_{s \in \R^m} \varphi(s) \E \big[ |X|^n \, \big| \, Y = s \big] < 1/\delta' . \]
\end{lemma}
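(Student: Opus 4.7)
The plan is to prove the two conclusions separately: the first reduces to a linear-algebraic estimate on principal submatrices, and the second then follows by Gaussian regression combined with the exponential decay of $\varphi$. Throughout I write $\Sigma = \textrm{Cov}[Y]$ and abbreviate $\|\Sigma\|_\infty$ for the largest entry in absolute value.

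For the determinant bound, I would first pin down the spectrum of $\Sigma$. Since $\|\Sigma\|_\infty \le 1/\delta$, the trace estimate gives $\lambda_{\max}(\Sigma) \le \textrm{tr}(\Sigma) \le m/\delta$; combined with $\det\Sigma > \delta$ this yields
\[ \lambda_{\min}(\Sigma) \ge \frac{\det \Sigma}{\lambda_{\max}(\Sigma)^{m-1}} \ge \delta \Big( \frac{\delta}{m} \Big)^{m-1}. \]
Cauchy's interlacing theorem transfers this lower bound to every principal submatrix, so $\lambda_{\min}(\textrm{Cov}[Y|_I]) \ge \lambda_{\min}(\Sigma)$ for all $I$, and hence $\det \textrm{Cov}[Y|_I] \ge \lambda_{\min}(\Sigma)^{|I|} \ge \lambda_{\min}(\Sigma)^m$, a positive constant depending only on $m$ and $\delta$.

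For the supremum bound, I would use Gaussian regression: conditional on $Y=s$, the variable $X$ is Gaussian with mean $\mu(s) = \E[X] + \textrm{Cov}[X,Y]\Sigma^{-1}(s-\E[Y])$ and variance $\tau^2 = \Var[X] - \textrm{Cov}[X,Y]\Sigma^{-1}\textrm{Cov}[X,Y]^T$, so immediately $\tau^2 \le \Var[X] \le 1/\delta$. The spectral bound just proved controls $\|\Sigma^{-1}\|_\infty$ by a constant $C_1(m,\delta)$, and together with the hypotheses $\|\textrm{Cov}[X,Y]\|_\infty$, $|\E[X]|$, $\|\E[Y]\|_\infty \le 1/\delta$ this gives $|\mu(s)| \le C_2(m,\delta)(1 + \|s\|_\infty)$. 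Since for $W \sim \mathcal{N}(\mu,\tau^2)$ one has $\E[|W|^n] \le c_n(|\mu|^n + \tau^n)$, it follows that
\[ \E\big[\, |X|^n \,\big|\, Y = s\,\big] \le C_3(m,n,\delta)\bigl(1 + \|s\|_\infty^n\bigr). \]

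Finally, the density $\varphi(s)$ has prefactor at most $(2\pi\delta)^{-m/2}$ and quadratic exponent
\[ -\tfrac{1}{2}(s-\E[Y])^T \Sigma^{-1}(s-\E[Y]) \le -\tfrac{1}{2}\lambda_{\max}(\Sigma)^{-1} \|s-\E[Y]\|_2^2 \le -\tfrac{\delta}{2m} \|s-\E[Y]\|_2^2, \]
so $\varphi(s) \le C_4 e^{-c\|s\|_2^2}$ with $C_4, c > 0$ depending only on $m,\delta$. The Gaussian decay dominates the polynomial factor $(1+\|s\|_\infty^n)$, which yields a finite supremum depending only on $m,n,\delta$ as required. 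There is no real obstacle here beyond careful bookkeeping of constants; the only mild point to be careful about is verifying that the lower bound on $\lambda_{\min}(\Sigma)$ is uniform in $|I|$ (hence in the worst case $|I|=m$) before feeding it into the regression step.
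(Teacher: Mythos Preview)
Your proof is correct and follows essentially the same route as the paper: a linear-algebraic lower bound on principal minors of $\Sigma$ for the first claim, and Gaussian regression combined with ``exponential density beats polynomial conditional moment'' for the second. The only cosmetic differences are that the paper obtains the minor bound via the Schur-complement identity $\det\textrm{Cov}[Y|_I] = \det\Sigma / \det\textrm{Cov}[Y|_{I^c}\,|\,Y|_I]$ rather than Cauchy interlacing, and evaluates the final supremum by an explicit diagonalisation of $\Sigma^{-1}$ instead of your qualitative polynomial-versus-Gaussian argument.
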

\begin{proof}
Suppose $I \subset \{1,\ldots,m\}$ (the case $I = \{1,\ldots,m\}$ is trivial). Then
\[   \textrm{Det} \textrm{Cov}[Y|_I]  =     \textrm{Det} \textrm{Cov}[Y] /  \textrm{Det} \textrm{Cov}\big[ Y|_{I^c} \big| Y|_I \big] \ge     \textrm{Det} \textrm{Cov}[Y] /  \textrm{Det} \textrm{Cov}[Y|_{I^c} ] \ge m^{-m} \delta^{m+1}  \]
which gives the first item. For the second item, since $(X | Y=s)$ is Gaussian, we have
 \begin{align*}
 \E \big[ |X|^n \, \big| \, Y = s \big]  & \le c_n  \Big(  \big| \E \big[ X \, \big| \, Y = s \big] \big|^n  + \Var \big[ X \, \big| \, Y = s \big]   \Big) \\
 &   \le c_n \Big(  \big| \E \big[ X \, \big| \, Y = s \big] \big|^n  +  \Var [X] \Big)  .
 \end{align*}
 Moreover, diagonalising $\text{Cov}[Y]^{-1} = U^T \Lambda^{-1} U$ for orthogonal $U$ and diagonal $\Lambda = (\lambda_i)$, and by Gaussian regression,
\begin{align*}
  \sup_{s \in \R^m} \varphi(s)  \big| \E \big[ X \, \big| \, Y = s \big] \big|^n & =   \sup_{s \in \R^m} \varphi(s)  \Big| \E[X] +  \textrm{Cov}[X, Y]^T \textrm{Cov}[Y]^{-1} (s - \E[X] )  \Big|^n  \\
&  \le c_{m,\delta} \Big( 1 +  \sup_{s \in \R^m}  ( w^T \Lambda^{-1} s)^n  \, e^{ -\frac{1}{2}  s^T   \Lambda^{-1}   s }  \Big) 
\end{align*}
where $w = (w_i)_i =  U \textrm{Cov}[X, Y] $ and we made the substitution $s \mapsto U(s - \E[X])$. By explicit calculation one can check that
\[   \sup_{s \in \R^m}  ( w^T \Lambda^{-1} s)^n  \, e^{ -\frac{1}{2}  s^T   \Lambda^{-1}   s }   =  (n/e)^n \Big( \sum_i w_i^2 \lambda_i^{-1} \Big)^n \le c_n  \|w\|_\infty^{2n} \| \Lambda \|^{-n}_\infty \]
To finish, observe that by the Cauchy-Schwarz inequality, and since $\|U\|_\infty \le 1$,
\[ \|w\|_\infty \le c_m   \|\textrm{Cov}[(X,Y)]\|_\infty^{1/2}   \quad \text{and} \quad \|\Lambda\|_\infty^{-1} \le  c_m \| \textrm{Cov}[Y] \|^{m-1}_\infty  /  \textrm{DetCov}[Y]  .  \]
Gathering the estimates gives the result.
\end{proof}

\bigskip

\bigskip
\bibliographystyle{plain}
\bibliography{sdi}

\begin{thebibliography}{10}

\bibitem{at07}
R.~Adler and J.~Taylor.
\newblock {\em Random fields and geometry}.
\newblock Springer, 2007.

\bibitem{as19}
C.~Alves and A.~Sapozhnikov.
\newblock Decoupling inequalities and supercritical percolation for the vacant
  set of random walk loop soup.
\newblock {\em Electron. J. Probab.}, 24:1--34, 2019.

\bibitem{at21}
C.~Alves and A.~Teixeira.
\newblock Cylinders' percolation: decoupling and applications.
\newblock {\em arXiv preprint arXiv:2112.10055}, 2021.

\bibitem{ap21}
S.~Andres and A.~Pr\'{e}vost.
\newblock First passage percolation with long-range correlations and
  applications to random {S}chr\"{o}dinger operators.
\newblock {\em arXiv preprint arXiv:2112.12096}, 2021.

\bibitem{bg17}
V.~Beffara and D.~Gayet.
\newblock Percolation of random nodal lines.
\newblock {\em Publ. Math. IHES}, 126:131--176, 2017.

\bibitem{bmr20}
D.~Beliaev, S.~Muirhead, and A.~Rivera.
\newblock A covariance formula for topological events of smooth {G}aussian
  fields.
\newblock {\em Ann. Probab.}, 48(6):2845--2893, 2020.

\bibitem{bor85}
C.~Borell.
\newblock Geometric bounds on the {O}rnstein--{U}hlenbeck velocity process.
\newblock {\em Z. Wahrsch. Verw. Gebiete}, 70(1):1--13, 1985.

\bibitem{cha08}
S.~Chatterjee.
\newblock Chaos, concentration, and multiple valleys.
\newblock {\em arXiv preprint arXiv:0810.4221}, 2008.

\bibitem{cn21}
A.~Chiarini and M.~Nitzschner.
\newblock Phase transition for level-set percolation of the membrane model in
  dimensions $d \ge 5$.
\newblock {\em J. Stat. Phys.}, 190(59), 2023.

\bibitem{cuz76}
J.~Cuzick.
\newblock A central limit theorem for the number of zeros of a stationary
  {G}aussian process.
\newblock {\em Ann. Probab.}, 4(4):547--556, 1976.

\bibitem{dpr18}
A.~Drewitz, A.~Pr\'{e}vost, and P.-F. Rodriguez.
\newblock Geometry of {G}aussian free field sign clusters and random
  interlacements.
\newblock {\em arXiv preprint arXiv:1811.05970}, 2018.

\bibitem{drs14}
A.~Drewitz, B.~R\'{a}th, and A.~Sapozhnikov.
\newblock On chemical distances and shape theorems in percolation models with
  long-range correlations.
\newblock {\em J. Math. Phys.}, 55(8):083307, 2014.

\bibitem{dgrs20}
H.~Duminil-Copin, S.~Goswami, P.-F. Rodriguez, and F.~Severo.
\newblock Equality of critical parameter for percolation of {G}aussian free
  field level-sets.
\newblock {\em Duke. Math. J.}, 172(5):839--913, 2023.

\bibitem{drrv21}
H.~Duminil-Copin, A.~Rivera, P.-F. Rodriguez, and H.~Vanneuville.
\newblock Existence of unbounded nodal hypersurface for smooth {G}aussian
  fields in dimension $d \ge 3$.
\newblock {\em Ann. Probab.}, 51(1):228--276, 2023.

\bibitem{moo10}
R.~O'Donnell E.~Mossel and K.~Oleszkiewicz.
\newblock Noise stability of functions with low influences: {I}nvariance and
  optimality.
\newblock {\em Ann. Math.}, 171:295--341, 2010.

\bibitem{nsv08}
M.~Sodin F.~Nazarov and A.~Volberg.
\newblock The {J}ancovici--{L}ebowitz--{M}anificat law for large fluctuations
  of random complex zeroes.
\newblock {\em Commun. Math. Phys.}, 284:833--865, 2008.

\bibitem{grs21}
S.~Goswami, P.-F. Rodriguez, and F.~Severo.
\newblock On the radius of {G}aussian free field excursion clusters.
\newblock {\em Ann. Probab.}, 50(5):1675--1724, 2022.

\bibitem{han02}
D.G. Handron.
\newblock Generalized billiard paths and {M}orse theory for manifolds with
  corners.
\newblock {\em Topology Appl.}, 126(1-2):83--118, 2002.

\bibitem{jan97}
S.~Janson.
\newblock {\em {Gaussian Hilbert spaces}}, volume 129.
\newblock Cambridge: Cambridge University Press, 1997.

\bibitem{kes82}
H.~Kesten.
\newblock {\em Percolation theory for mathematicians}.
\newblock Progress in Probability and Statistics Vol. 2. Springer, 1982.

\bibitem{led98}
M.~Ledoux.
\newblock A short proof of the {G}aussian isoperimetric inequality.
\newblock In E.~Eberlein, M.~Hahn, and M.~Talagrand, editors, {\em High
  Dimensional Probability. Progress in Probability, vol 43.}, pages 229--232.
  Birkh\"{a}user, Basel, 1998.

\bibitem{leh66}
E.L. Lehmann.
\newblock Some concepts of dependence.
\newblock {\em Ann. Math. Stat.}, 37(5):1137--1153, 1966.

\bibitem{ms83a}
S.A. Molchanov and A.K. Stepanov.
\newblock Percolation in random fields. {I}.
\newblock {\em Theor. Math. Phys.}, 55(2):478--484, 1983.

\bibitem{ms83b}
S.A. Molchanov and A.K. Stepanov.
\newblock Percolation in random fields. {II}.
\newblock {\em Theor. Math. Phys.}, 55(3):592--599, 1983.

\bibitem{ms83c}
S.A. Molchanov and A.K. Stepanov.
\newblock Percolation in random fields. {III}.
\newblock {\em Theor. Math. Phys.}, 67(2):434--439, 1986.

\bibitem{mos10}
E.~Mossel.
\newblock Gaussian bounds for noise correlation of functions.
\newblock {\em Geom. Funct. Anal.}, 19:1713--1756, 2010.

\bibitem{m22}
S.~Muirhead.
\newblock Percolation of strongly correlated {G}aussian fields {II}.
  {S}harpness of the phase transition.
\newblock {\em arXiv preprint arXiv:2206.10724}, 2022.

\bibitem{mrv20}
S.~Muirhead, A.~Rivera, and H.~Vanneuville.
\newblock The phase transition for planar {G}aussian percolation models without
  {FKG}.
\newblock {\em Ann. Probab. (to appear)}.
\newblock With an appendix by L. K\"{o}hler-Schindler.

\bibitem{ms22}
S.~Muirhead and F.~Severo.
\newblock Percolation of strongly correlated {G}aussian fields {I}. {D}ecay of
  subcritical connection probabilities.
\newblock {\em arXiv preprint arXiv:2206.10723}, 2022.

\bibitem{mv20}
S.~Muirhead and H.~Vanneuville.
\newblock The sharp phase transition for level set percolation of smooth planar
  {G}aussian fields.
\newblock {\em Ann. I. Henri Poincar\'e Probab. Stat.}, 56(2):1358--1390, 2020.

\bibitem{pit82}
L.D. Pitt.
\newblock Positively correlated normal variables are associated.
\newblock {\em Ann. Probab.}, 10(2):496--499, 1982.

\bibitem{pr15}
S.~Popov and B.~R\'{a}th.
\newblock On decoupling inequalities and percolation of the excursion sets of
  the {G}aussian free field.
\newblock {\em J. Stat. Phys.}, 159:312--320, 2015.

\bibitem{pt15}
S.~Popov and A.~Teixeira.
\newblock Soft local times and decoupling of random interlacements.
\newblock {\em J. Eur. Math. Soc.}, 17(10):2545--2593, 2015.

\bibitem{rv19}
A.~Rivera and H.~Vanneuville.
\newblock Quasi-independence for nodal lines.
\newblock {\em Ann. Henri Poincar{\'e}}, 55(3):1679--1711, 2019.

\bibitem{rod16}
P.-F. Rodriguez.
\newblock Decoupling inequalities for the {G}inzburg-{L}andau $\nabla \phi$
  models.
\newblock {\em arXiv preprint arXiv:1612.02385}, 2016.

\bibitem{rs13}
P.-F. Rodriguez and A.-S. Sznitman.
\newblock Phase transition and level-set percolation for the {G}aussian free
  field.
\newblock {\em Comm. Math. Phys.}, 320(2):571--601, 2013.

\bibitem{sap17}
A.~Sapozhnikov.
\newblock Random walks on infinite percolation clusters in models with
  long-range correlations.
\newblock {\em Ann. Probab.}, 45(3):1842--1898, 2017.

\bibitem{sev21}
F.~Severo.
\newblock Sharp phase transition for {G}aussian percolation in all dimensions.
\newblock {\em Ann. Henri Lebesgue}, 5:987--1008, 2022.

\bibitem{sni12}
A.-S. Snitzman.
\newblock Vacant set of random interlacements and percolation.
\newblock {\em Invent. Math.}, 187(3):645--706, 2012.

\bibitem{tw12}
J.~Tykesson and D.~Windisch.
\newblock Percolation in the vacant set of {P}oisson cylinders.
\newblock {\em Probab. Theory Related Fields}, 154:165--191, 2012.

\end{thebibliography}


\end{document}